\documentclass[times,final,11pt]{elsarticle}
\usepackage{jcomp}
\usepackage{amsmath}
\usepackage{amssymb}
\usepackage{graphicx}
\usepackage{algorithm}
\usepackage{booktabs}
\usepackage{amssymb,amsthm,amsmath,bm,amsfonts}

\usepackage{natbib}
\usepackage{hyperref}

\usepackage{pgfplots}
\usepgfplotslibrary{external}
\newcommand{\R}{\mathbb{R}} 
\newcommand{\A}{\mathcal{A}} 
\newcommand{\B}{\mathcal{B}} 
\newcommand{\D}{\mathcal{D}}

\usepackage{hyperref}
\usepackage{subcaption}

\newtheorem{remark}{Remark}
\newtheorem{definition}{Definition}
\newtheorem{theorem}{Theorem}[section]

\begin{document}

\begin{frontmatter}
	
       \title {{\bf A relaxation system  for the  low-Mach limit of kinetic equations: Stability and higher order asymptotic preserving scheme}\tnoteref{tnote1} 
	}

	\tnotetext[tnote1]{The work of Theresa K\"{o}fler has been supported by the European Union's Horizon Europe research and innovation programme under grant agreement No. 101086214 (DATAHYKING project).
	The work of Axel Klar and Sudarshan Tiwari has been supported by German Research Foundation (DFG) through grant  KL-1105-34,  ``Asymptotic preserving higher-order  meshfree schemes for kinetic equations".
	The work of Lorenzo Pareschi was partially supported by the Royal Society under the Wolfson Fellowship ``Uncertainty quantification, data-driven simulations and learning of multiscale complex systems governed by PDEs" and by the FIS2023-01334 Advanced Grant ``Tackling complexity: advanced numerical approaches for multiscale systems with uncertainties" (ADAMUS). The work of Giacomo Dimarco has been supported by by the Italian Ministry of University and Research (MUR) through the PRIN 2022 project (No. 2022KKJP4X) ``Advanced numerical methods for time dependent parametric partial differential equations with applications". This work has been written within the activities of GNFM and GNCS groups of INdAM (Italian National Institute of High Mathematics). Program codes can be found on \url{https://github.com/sudarshan825/}.}
	
	\author[FE,CMCS]{Giacomo \snm{Dimarco}}
	\ead{giacomo.dimarco@unife.it}

	\author[KA]{Axel \snm{Klar}}
	\ead{klar@rptu.de}

	\author[FE,KA]{Theresa \snm{K\"{o}fler}}
	\ead{theresa.kofler@unife.it}

	\author[FE,HW]{Lorenzo \snm{Pareschi}}
	\ead{l.pareschi@hw.ac.uk}
	
	\author[KA]{Sudarshan \snm{Tiwari}}
	\ead{stiwari@rptu.de}
	
	\author[AA]{Yizhou \snm{Zhou}}
	\ead{zhou@igpm.rwth-aachen.de}

	\address[KA]{Department of Mathematics, RPTU Kaiserslautern-Landau, Kaiserslautern, DE}

	\address[FE]{Department of Mathematics and Computer Science, University of Ferrara, Italy}
	\address[CMCS]{Center for Modeling, Computing and Statistics, University of Ferrara, Italy}
	\address[HW]{Maxwell Institute for Mathematical Sciences and Department of Mathematics, Heriot-Watt University, Edinburgh, UK}
	\address[AA]{Department of Mathematics, RWTH Aachen, Aachen, DE}

\begin{abstract} 
	This work introduces a  hyperbolic relaxation system designed for the simulation of kinetic equations in the low-Mach number limit. The methodology is built upon a micro-macro decomposition of the scaled BGK model, which reformulates the kinetic distribution function into a coupled system consisting of a macroscopic equilibrium part and a microscopic non-equilibrium remainder. By projecting the microscopic deviations onto a set of orthogonal polynomials, we derive a closed moment-relaxation system.
	The resulting system is a version of Grad's 13 moment system with a linear hyperbolic part and a relaxation adapted to the incompressible limit.
	%A central contribution of this work is the rigorous analytical investigation of the model’s structural stability; we prove that the system is symmetrizable and satisfies the necessary dissipative properties required for formal convergence to the incompressible Navier–Stokes equations. 
	We prove the model's structural stability in  the incompressible Navier–Stokes limit. Moreover, 
	we develop a high-order Asymptotic-Preserving (AP) numerical framework using Implicit-Explicit (IMEX) Runge–Kutta schemes for temporal accuracy and finite difference WENO reconstructions as well as central difference approximations for high-order spatial resolution. The proposed scheme ensures uniform stability and consistency across different physical regimes, automatically degenerating into a consistent high order discretization of the incompressible thermal Navier--Stokes limit  as the scaling parameter vanishes. Numerical experiments in one and two dimensions corroborate the theoretical findings, demonstrating significant reductions in computational cost and robust performance across a wide range of Knudsen and Mach numbers.
\end{abstract}
	
	\begin{keyword} 
	 \MSC 35Q20 \sep 65M06 \sep 65L04 \sep 76P05 \sep 82C40
	\\
	 	\KWD BGK collision operator; micro-macro decomposition; relaxation system; asymptotic preserving; low Mach number limit; incompressible Navier–Stokes equations; numerical methods for stiff equations; IMEX schemes.
	 \end{keyword}
	
\end{frontmatter}

\section{Introduction}

Kinetic equations, like the Boltzmann equation and its simplified BGK variant, act as a bridge from the microscopic dynamics of particles to macroscopic quantities \cite{Cer,Levermore91}. These macroscopic limits are obtained when a system is dense enough, meaning the number of particles is large and interactions occur frequently relative to the scale of interest. This rapid interaction allows tiny regions within the system to reach local thermodynamic equilibrium almost instantly. This occurs when the Knudsen number, representing the ratio of the mean free path of particles between collisions to a characteristic length of the flow, approaches zero. Assuming a fixed Reynolds number, the Mach number proportionally decreases as the hydrodynamic limit is approached, asymptotically recovering the incompressible Navier-Stokes (INS) equations \cite{GSR,Saint2003}. However, solving the full kinetic equation in this limit is computationally demanding due to high dimensionality and extreme numerical stiffness. 

From a numerical perspective, the first issue arises from the discretization of the additional dimensions of the velocity space. Based on discrete kinetic models, the Discrete-Velocity-Method (DVM) relies on replacing the continuous velocity space with a finite set of discrete velocities defined on a fixed grid \cite{Mieussens,Platkowski1988}. This transformation converts the original kinetic equation into a large system of coupled partial differential equations. High-resolution requirements for the velocity grid lead to a computational burden that often becomes prohibitive, especially for multidimensional problems. Starting with the seminal paper by Grad \cite{grad1949kinetic}, this issue is frequently addressed by deriving approximate equations for the moments of the distribution function of the kinetic equation. A key challenge here is the so-called closure problem: deriving a finite set of equations from the infinite moment hierarchy while preserving essential physical properties. A prominent framework for addressing this is Extended Thermodynamics, as comprehensively formulated by Müller and Ruggeri \cite{MuellerRuggeri}, which provides a rigorous, thermodynamically consistent basis for closing these macroscopic systems. The literature on the moment approach is extensive; as further examples, we cite \cite{struchtrup2003regularization, sarna2020convergence,frank2007approximate,brunner2005one,frank2007fast,garrett2013comparison,alldredge2014adaptive,alldredge2019regularized} among many others, and the references therein. Usually, such systems fall into the class of hyperbolic relaxation systems, which have emerged as a powerful framework to approximate complex multiscale dynamics. Generally, these systems consist of first-order hyperbolic equations, where a stiff source term drives the state toward a specific equilibrium manifold \cite{Yong2001,JinLevermore1996,NaldiPareschiToscani2002}. In the specific context of kinetic equations, relaxation systems are often designed to capture the transition from the rarefied regime to the fluid dynamic limit, ensuring stability in the limit and avoiding the oscillations frequently associated with classical macroscopic approximations \cite{Cor-Yong}.

The second numerical issue mentioned above---the stiffness of the equations in the fluid dynamic limit---remains present in moment systems. So-called asymptotic preserving (AP) methods can be designed for either the kinetic equation or the moment system to ensure stability and accuracy across different regimes, eliminating the need to refine discretization parameters according to the small scales of the problem. Specifically, AP schemes are formulated such that, as the scaling parameter vanishes, the numerical method automatically degenerates into a consistent discretization of the limiting macroscopic equations. This property makes them exceptionally effective for multiscale problems where kinetic and fluid regimes coexist and interact. However, it is crucial to distinguish between the numerical AP property and the physical validity of the underlying mathematical model. While AP numerical frameworks are theoretically designed to be stable and to function across the entire range of the scaling parameter $\varepsilon$, the macroscopic relaxation system derived in this work is physically limited in this regard. The proposed moment-relaxation system is explicitly tailored to approximate the dynamics in the asymptotic regime of small $\varepsilon$ (the low-Mach and continuum limit) and is not designed to accurately capture highly rarefied gas dynamics where $\varepsilon$ is large. Therefore, while the AP scheme ensures a seamless and efficient numerical transition to the incompressible Navier-Stokes limit without grid-induced stiffness issues, the physical accuracy of the solution for large $\varepsilon$ remains strictly constrained by the inherent truncation of the underlying moment model. 

In recent years, relaxation-based AP schemes have been successfully applied to a wide range of applications, proving to be a versatile alternative to direct kinetic solvers \cite{JinPareschiToscani1998,JinPareschi2000,Russo,JinAP3,JinAP2,JinAP1,Klar99,DegondTang2011,HuangXingXiong2025,DimarcoLoubereVignal2017}. For the specific problem of the numerical incompressible Navier-Stokes limit of the BGK equation, there is a relatively small number of numerical works; we refer to \cite{K992} and the recent work in \cite{2ndPaper}. This stands in contrast to the extensive literature available on the numerical low-Mach limit of compressible fluid dynamics, such as \cite{Klein,DegondTang2011,HuangXingXiong2025,DimarcoLoubereVignal2017}. A powerful framework to address these multiscale challenges is the micro-macro decomposition, a mathematical technique used to reformulate the kinetic distribution function by explicitly separating the equilibrium and non-equilibrium dynamics \cite{BennouneLemou,LemouMieussens}. This partition is particularly effective in the low-Mach limit, allowing the pressure term to emerge naturally as a Lagrange multiplier for the incompressibility constraint while maintaining a clear description of the viscous and thermal dissipation terms \cite{CrouseillesLemou}. By focusing on the evolution of these decomposed components, it is possible to design numerical methods that transition uniformly from the rarefied to the continuum limit without a loss of accuracy or stability.

Among the various strategies to implement AP methods for these systems, Implicit-Explicit (IMEX) Runge-Kutta schemes have emerged as a powerful tool for managing multiscale stiffness \cite{AscherRuuthSpiteri1997, DimarcoAP1,DimarcoAP2,IMEX_PR,Ruuth,Multis,Bosc1,BoscarinoPareschiRusso2024}. The core idea behind IMEX methods is to split the governing equations into stiff and non-stiff components. In the context of relaxation systems, the stiff collision/relaxation terms are treated implicitly to ensure stability without the need for excessively small time steps, while the non-stiff transport terms are handled explicitly to maintain computational efficiency and avoid the complexity of solving high-dimensional, non-linear algebraic systems. Furthermore, IMEX methods inherit the AP property, guaranteeing a smooth transition from the kinetic to the hydrodynamic regime without a loss of accuracy \cite{Bosc2,BOSCARINO,Zeifang2020,BoscheriDimarcoLoubere2020}. These time-stepping strategies, when combined with high-resolution spatial discretizations---such as weighted essentially non-oscillatory (WENO) \cite{Shu} or central difference schemes---enable the simulation of complex flows across a wide range of Knudsen and Mach numbers with a computational effort comparable to that of traditional fluid solvers.

The present work complements the discrete-velocity formulation developed in \cite{1stPaper2025} and the full kinetic micro--macro AP framework introduced in \cite{2ndPaper}. 
First, we derive a reduced moment system tailored to the incompressible limit and verify its structural stability conditions. Unlike traditional LB-based discrete-velocity models that are typically restricted in spatial dimension \cite{1stPaper2025}, the present formulation applies to arbitrary spatial dimensions $d$. By avoiding a full discretization of the velocity space, it achieves high computational efficiency, albeit with accuracy restrictions in the highly rarefied regime. 
%Second, adapting the AP framework from \cite{2ndPaper}, we develop an efficient simulation method using a micro-macro decomposition combined with an arbitrarily high-order AP numerical scheme based on IMEX Runge-Kutta time integrators, WENO spatial reconstructions, and high-order central difference approximations. In the asymptotic limit, these methods reduce to high-order projection methods for the INS equations, solved on a collocated grid.
Second, we describe a computationally efficient framework for the simulation of the moment system in the low-Mach and small-Knudsen number limits using a micro-macro decomposition. Finally, building on earlier ideas \cite{2ndPaper}, we develop an arbitrarily high-order AP numerical scheme based on IMEX Runge-Kutta time integrators, WENO spatial reconstructions, and high-order central difference approximations, enabling robust simulations across a wide range of physical regimes. In the asymptotic limit, the proposed methods become high-order projection methods for the INS equations, solved on a collocated grid.

%%%%%%%%%%%%%%
The remainder of the paper is structured as follows. Section~\ref{sec2} details the mathematical derivation of the relaxation system, starting from the scaled BGK model and utilizing a micro-macro decomposition to identify the governing moment equations. Section~\ref{sec3} is dedicated to the structural stability analysis of the system, where we prove its symmetrizability and verify the dissipative properties of the source terms. Section~\ref{sec4} introduces the first-order AP numerical algorithm and describes the semi-implicit time discretization designed to handle the low-Mach stiffness. Section~\ref{sec5} extends the numerical framework to higher-order time discretizations using the IMEX Runge-Kutta approach. Section~\ref{sec6} describes the spatial discretization strategy, including the implementation of finite difference and WENO schemes. Section~\ref{sec7} presents a comprehensive series of numerical tests in both 1D and 2D. We provide a detailed comparison between our relaxation model and full kinetic simulations, highlighting the accuracy of the proposed method and the substantial gains in computational efficiency. Finally, Section~\ref{sec8} draws concluding remarks and outlines directions for future research.

\section{Kinetic equations and the low-Mach limit}\label{sec2}
%We use the notation from \cite{SR} and start with the nonlinear kinetic BGK model in $\R^d$
Adopting the notation from \cite{Saint2003}, we begin with the nonlinear kinetic BGK model in $\R^d$,
\begin{align}
	\label{BGKoriginal}
\partial_t F +  \frac{1}{\epsilon} v \cdot \nabla_x F= \frac{1}{\epsilon^2} C(F)
\end{align}
%with 
where the collision operator is given by
$$
C(F) = \frac{1}{\tau}  ( M_F -F),
$$
%where 
and $M_F$ represents the local Maxwellian distribution:
\begin{align}
	\label{Maxwelloriginal}
	M_F= \frac{\rho_F}{(2 \pi T_F)^{\frac{d}{2}}}exp(- \frac{\vert v-u_F \vert^2}{2 T_F}) =M_{(\rho_F,u_F,T_F)}.
\end{align}
%with 
The macroscopic quantities (density, velocity, and temperature) are determined by the velocity moments:
$$
\rho_F = \int F(x,v ) dv , \;  \rho_F u_F = \int v F(x,v ) dv , \; \rho_F u_F^2 + d \rho_F T_F = \int \vert v \vert^2 F(x,v ) dv .
$$
%Following \cite{SR,Levermore91} and using the expansion 
Following the standard scaling analysis \cite{Saint2003,Levermore91}, we consider small perturbations around a global equilibrium $M$ (with unit density, zero mean velocity, and unit temperature) via the ansatz
$$
F= M(1+\epsilon f).
$$
%where  $M$ is the standard Mawellian with density $1$, mean zero and variance $1$,
%we have
This leads to the scaled kinetic equation for the perturbation $f$:
\begin{align}
\partial_t  f +  \frac{1}{\epsilon}  v \cdot \nabla_x  f = \frac{1}{\epsilon^3} \frac{1}{M}C(M(1+\epsilon f)).
\end{align}
%Note that 
The underlying macroscopic variables are related to $f$ through:
\begin{align*}
\rho_F = 1+ \epsilon <f>, \; \rho_F u_F = \epsilon < vf>, \; \rho_F \vert u_F\vert^2 + d \rho_F T_F= d+ \epsilon < \vert v \vert^2 f>
\end{align*}
with
\[
\langle f \rangle = \langle f \rangle_{L^2(M dv )} = \int_{\mathbb{R}^d} f(v) M(v) \, dv.
\]
%Therefore, 
Accordingly, we can express the macroscopic fluctuations as:
\begin{align}
	\label{relation}
\rho_F = 1+ \epsilon \rho, \; \rho_F u_F = \epsilon u, \;  T_F= 1+ \epsilon \frac{T}{1+\epsilon \rho} - \epsilon^2 \frac{\vert u\vert^2}{d(1+\epsilon \rho)^2}=1+ \epsilon T + \mathcal{O}(\epsilon^2)
\end{align}
using the notation 
$  \rho = <f>  $, $ u = <v f>$ , $  T = <\frac{\vert v\vert^2-d}{d}f> $. 
%Then,  using the  projection
By defining the orthogonal projection operator onto the space collision invariants,
\begin{align}
	\label{projection}Pf=  \rho +  v \cdot u+ \frac{\vert v\vert^2-d}{2} T 
\end{align}
% a Taylor expansion of $C$ gives 
a Taylor expansion of the collision operator $C$ around the global Maxwellian reveals:
\begin{align}
	\label{expansion}
\frac{1}{M} C(M(1+\epsilon f)) = \epsilon L f + \epsilon^2 Q(f) 
\end{align}
where
$$
Lf = L(I-P)f= \frac{1}{\tau}(P-I) f \; 
$$
%and $Q$ a remainder term. For the following we do not need details on $Q$, but note that $P Q(f) =0$. 
and $Q(f)$ denotes a higher-order remainder satisfying $P Q(f) = 0$.
%One obtains the following scaled kinetic equation for $f$
Consequently, the scaled BGK equation takes the form:
\begin{align}
\label{kinscaled}
\partial_t f +  \frac{1}{\epsilon} v \cdot \nabla_x f= \frac{1}{\epsilon^2} Lf+ \frac{1}{ \epsilon } Q (f).
\end{align}

%One observes \cite{Levermore91,SR}, that for $\epsilon \rightarrow 0$ the distribution function $f$ converges towards 
%a function $f_0$ of the form
As established in \cite{Levermore91,Saint2003}, in the limit $\epsilon \rightarrow 0$, the perturbation $f$ approaches a local equilibrium state $f_0$:
%The solvability conditions for these equations yield
\[
f_0 = \rho + v \cdot u + \frac{|v|^2 - d}{2} T.
\]
%Moreover, compare \cite{SR}, the moments of the remainder term $Q (f)$ with respect to the polynomials 
Furthermore, the moments of the non-linear remainder $Q(f)$ evaluated against the traceless tensor polynomial
$$
\mathcal{A}(v) = v \otimes v -\frac{\vert v \vert^2 }{d}I
$$
and the vector polynomial
$$
\mathcal{B}(v) = v (\vert v \vert^2 -(d+2))
$$
%converge for $\epsilon \rightarrow 0$ as
exhibit the asymptotic behaviour \cite{Saint2003}:
%<B(v) Q^\epsilon>\rightarrow  \frac{1}{\tau} (d+2) (u T)
%$$
%and
\begin{align}
\label{convQA}
<\mathcal{A}(v) Q (f)>\xrightarrow{\epsilon \rightarrow 0}   \frac{1}{\tau} \mathcal{A}(u)
\end{align}
and 
\begin{align}
\label{convQB}
<\mathcal{B}(v) Q (f)>\xrightarrow{\epsilon \rightarrow 0}   \frac{1}{\tau} (d+2) (u T).
\end{align}

Taking the limit $\epsilon \to 0$ rigorously (see \cite{Saint2003} for the BGK operator and \cite{GSR} for the Boltzmann setting), one recovers the incompressible Navier-Stokes equations with Boussinesq coupling for density and temperature:
%\begin{align}
%\label{limit0}
%\nabla_x \cdot  u =0 \\
%\nabla_x (\rho+T) =0 \nonumber\\
%%\partial_t \rho + \nabla\cdot \bar u =0 \nonumber \\
%\partial_t u + \nabla_x \cdot u \otimes u + \nabla_x \tilde{p}= \tau  \Delta_x  u\nonumber\\
%%\partial_t T +\nabla_x \cdot< v \frac{v^2-3}{3} f_1 >= 0\\
%\partial_t T +\nabla_x \cdot (uT) =  \tau \Delta_x T,\nonumber
%\end{align}
%where $\tilde{p} = \frac{1}{\epsilon}(\rho + T)$ acts as the hydrodynamic pressure.

\begin{align}
\label{limit0}
\nabla_x \cdot  u &= 0 \\
\nabla_x (\rho+T) &= 0 \nonumber\\
%\partial_t \rho + \nabla\cdot \bar u &=0 \nonumber \\
\partial_t u + \nabla_x \cdot u \otimes u + \nabla_x \Pi &= \tau \Delta_x u \nonumber\\
%\partial_t T +\nabla_x \cdot< v \frac{v^2-3}{3} f_1 > &= 0\\
\partial_t T +\nabla_x \cdot (uT) &= \tau \Delta_x T, \nonumber
\end{align}
where $\Pi = \tilde{p} - \frac{|u|^2}{d}$ acts as the effective hydrodynamic pressure, absorbing the dynamic pressure contribution, and $\tilde{p} = \frac{1}{\epsilon}(\rho + T)$ represents the thermodynamic pressure fluctuation.
%In passing we note that a rigorous proof of convergence for the BGK collision operator  has been obtained in \cite{SR} and for the full Boltzmann collision operator in \cite{GSR}.
\subsection{Derivation of the relaxation system}
	
%Following the procedure in \cite{2ndPaper}, we start by rewriting the scaled kinetic equation separating macroscopic and remaining kinetic terms
%	using a micro-macro decomposition.
%	We decompose $f$ in a macroscopic and a microscopic  part using the projection $P$ given, for example, in (\ref{projection}) and 
% $f = P f + (I-P)f$.
 To establish the relaxation model, we apply a micro-macro decomposition strategy similar to \cite{2ndPaper}. We partition the distribution function into its hydrodynamic component $Pf$ and kinetic fluctuation $(I-P)f$. Applying $P$, given in (\ref{projection}), and $(I-P)$ to the scaled kinetic equation and utilizing $PL(f) = PQ(f) = 0$ yields:
%We use  again the notation
% $\rho = <f> = \int f M d v , u = <v f> $ and $T = <\frac{\vert v\vert^2-d}{d}f> $. Moreover, define
% $S = <\vert v\vert^2-(d+2)f> =d T -2 \rho$.
%
%The corresponding   projection  of the kinetic equations yields 
\begin{align}
	\label{pro1}
&&\partial_t Pf + \frac{1}{\epsilon}\nabla \cdot P(v  f )&=  0\\
\label{pro2}
&& \partial_t (I-P)f+  \frac{1}{\epsilon}\nabla_x \cdot  (I-P) (v  f)&=\ -\frac{1}{\epsilon^2 \tau }(I-P) f+ \frac{1}{\epsilon}Q(f)
\end{align}
%using   $PL(f)=PQ(f)=0$.
%For further use we collect some properties of the polynomials
%$\mathcal{A}$ and $\mathcal{B}$.
%%and 
%%$$
%%B(v) = v (\vert v \vert^2 -(d+2)).
%%$$
%That is first, that $\mathcal{A}$ and $\mathcal{B}$ are orthogonal polynomials in the orthogonal complement of the collision invariants.
%Moreover,
Before proceeding, we summarize essential orthogonality and tensor algebraic properties of $\mathcal{A}$ and $\mathcal{B}$. Both polynomials reside in the orthogonal complement of the kernel of $L$. Furthermore:
 $$
< \mathcal{B}(v) \otimes \mathcal{B}(v) > = 2 (d+2) I ,
$$
%and for $\Theta \in \R^d \times \R^d$
%$$
%<\mathcal{A}(v) \otimes \mathcal{A}(v) >  :\Theta = \Theta + \Theta^T  - \frac{2}{d}  tr(\Theta) I = \mathcal{D} (\Theta)
%$$
%or especially
$$
<\A(v) \otimes \A(v) >  : u \otimes u = 2 \A(u)  
$$
and
$$
<\A(v) \otimes \A(v) >  : \nabla_x u  = \nabla_x u +(\nabla_x u )^T  -  \frac{2}{d} \nabla_x \cdot  u I = \mathcal{D} (\nabla_x u).
$$
%A direct calculation gives 
Taking the divergence gives:
$$
\nabla_x \cdot (<A(v) \otimes A(v) >  : \nabla_x u)  = \Delta_x u   +  \frac{d-2}{d} \nabla_x (\nabla_x \cdot  u).
$$
%Moreover,
%$$
%<B(v) Q>= \frac{1}{\tau} (d+2) (u T)
%$$
%and
%$$
%<A(v) Q>=  \frac{1}{\tau} A(u)
%$$
%\begin{remark}
%	Note that this derivation corresponds to a fluid with compressible viscosity coefficients where the shear viscosity is $\mu = 1$ and the second viscosity coefficient is $\lambda = -2/d$. Under these conditions, the bulk (volume) viscosity is zero. To consider more complex physical situations, such as polyatomic gases with non-zero bulk viscosity, a suitably modified BGK model must be employed.
%\end{remark}

%Rewrite now equation (\ref{pro1}) using  moments of $f$ 
%as
We now recast the $Pf$ equation \eqref{pro1} in terms of macroscopic moments:
%\begin{align*}
%\partial_t <f> + \frac{1}{\epsilon}\nabla_x \cdot <v  f> =0 \\
%\partial_t <vf> + \frac{1}{\epsilon}\nabla_x \cdot <v \otimes v f> = 0\\
%\partial_t <(\vert v\vert^2-(d+2))f> +\frac{1}{\epsilon}\nabla_x \cdot< v (\vert v\vert^2-(d+2)) f >= 0\\
%\end{align*}
%or
%gives
%\begin{align*}
%\partial_t \rho + \nabla_x \cdot <v  f_1> =0 \\
%\partial_t u + \nabla_x \cdot <v \otimes v f_1> = 0\\
%\partial_t <\frac{v^2-5}{2}f_0> +\nabla_x \cdot< A(v) f_1 >= 0\\
%\end{align*}
%Rewriting gives
%\begin{align*}
%\partial_t \rho + \nabla_x \cdot \hat u =0 \\
%\partial_t u + \nabla_x \cdot <A(v)  f_1> + \nabla_x p= 0\\
%%\partial_t T +\nabla_x \cdot< v \frac{v^2-3}{3} f_1 >= 0\\
%\partial_t S +\nabla_x \cdot< B(v) f_1 >= 0
%\end{align*}
%
%Then
%\begin{align*}
%\partial_t \rho + \nabla_x \cdot \hat u =0 \\
%\partial_t u + \nabla_x \cdot <L^{-1}(B(v))  L f_1> + \nabla_x p = 0\\
%%\partial_t T +\nabla_x \cdot< v \frac{v^2-3}{3} f_1 >= 0\\
%\partial_t S +\nabla_x \cdot< L^{-1}(A(v)) L f_1 >= 0
%\end{align*}
 %for the BGK model
\begin{align}
%\label{Pf0}
\epsilon\partial_t \rho + \nabla_x \cdot  u =0 \\
\epsilon\partial_t u + \nabla_x \cdot <\mathcal{A}(v)  (I-P) f> + \nabla_x (T+\rho) = 0 \nonumber\\
%\partial_t T +\nabla_x \cdot< v \frac{v^2-3}{3} f_1 >= 0\\
\epsilon \partial_t S +\nabla_x \cdot< \mathcal{B}(v) (I-P) f >= 0\nonumber
\end{align}
with $S = <\vert v\vert^2-(d+2)f> =d T -2 \rho$. Re-expressing the equation for $S$ yields:
\begin{align}
\epsilon \partial_t  T + \frac{2}{d}	\nabla_x \cdot u  +  \frac{1}{d}\nabla_x \cdot < \B(v) (I-P)f  >= 0
\end{align}

%Note that $S \rightarrow (d+2) T$ for $\epsilon \rightarrow 0$.
%The $(I-P)f_0$ equation gives 
%\begin{align}
%%\label{irrelevant}
%\partial_t (I-P) f_0 + \nabla_x \cdot (I-P)(v  f_1) \\
%= -\frac{1}{\epsilon^2} (I-P)f_0+\frac{1}{\epsilon } [Q(f_0+\epsilon f_1)-Q(f_0)]
%\nonumber
%\end{align}
%Note that $(I-P)f_0 =  \mathcal{O}(\epsilon^2)$.
%
%
%The $Pf_1$ equation gives 
%\begin{align*}
%\partial_t < f_1> + \frac{1}{\epsilon^2} \nabla_x \cdot <v    f_0> =0 \\
%\partial_t <v f_1> + \frac{1}{\epsilon^2} \nabla_x \cdot <v \otimes v   f_0> =0 \\
%\partial_t <\frac{v^2}{d} f_1> + \frac{1}{\epsilon^2} \nabla_x \cdot <v\frac{v^2}{d}   f_0> =0
%\end{align*}
%or 
%\begin{align}
%\partial_t \hat \rho + \frac{1}{\epsilon^2} \nabla_x \cdot u  =0 \\
%\partial_t \hat u + \frac{1}{\epsilon^2} \nabla_x  (\rho+T)+\frac{1}{\epsilon^2} \nabla_x \cdot <v \otimes v   (I-P) f_0>) =0 \nonumber\\
%\partial_t p+ \frac{1}{ \epsilon^2} \nabla_x \cdot  (\frac{d+2}{d}u+<v\frac{v^2}{d}   (I-P)f_0> ) =0 \nonumber
%\end{align}
%This gives divergence free constraint and Boussinesq in the limit.
%Equation (\ref{pro2}) gives 
Next, the $(I-P)f$ equation \eqref{pro2} is rewritten as:
\begin{align}
\epsilon^2 \partial_t (I-P)f+  \epsilon \nabla_x \cdot  (I-P) (v  f)= -\frac{1}{\tau}(I-P) f+  \epsilon Q(f)
\end{align}

%	We evaluate the second term on the left as
Splitting the advection term into macroscopic and microscopic contributions gives:
\begin{align*}
(I-P)  (v f) = (I-P)(v P(f))+ (I-P)(v (I-P) (f))
\end{align*}
%with 
where the macro contribution simplifies directly to:		
\begin{align*}
(I-P)(v P(f))%=	(I-P)(v [<f> +  v <v f >+ \frac{v^2-d}{2} <\frac{v^2-d}{d}f>])\\
%=(I-P)(v <f> +  v \otimes v  <v f >+ v \frac{v^2-d}{2} <\frac{v^2-d}{d}f>)\\
%= (I-P)(  (A(v) + \frac{\vert v \vert^2}{d} ) <v f >+ (\frac{1}{2} B(v) + v )  <\frac{v^2-d}{d}f>)\\
%= (I-P)(  A(v)  <v f >+ \frac{B(v)}{2}   <\frac{v^2-d}{d}f>)\\
=   \A(v)  <v f >+ \frac{\B(v)}{2}   <\frac{v^2-d}{d}f>
\end{align*}
by using the properties of the polynomials $\A$ and $\B$.	
%Moreover, we compute 
%\begin{align*}
%(I-P)(v (I-P) f=	v (I-P)(f) - P(v(I-P )f )  \\
%%=
%%v (I-P)(f) - [<v (I-P)f> +  v <v \otimes v (I-P)f >+ \frac{v^2-d}{2} <v \frac{\vert v\vert^2-d}{d}(I-P)f>]\\
%%=v (I-P)(f) -  <(v \otimes v - \frac{\vert v \vert^2}{d} I )(I-P)f > v \\- \frac{\vert v \vert^2-d}{2} <v \frac{\vert v \vert^2-(d+2)}{d}(I-P)f>\\
%=v (I-P)(f) -  <A(v)(I-P)f > v - \frac{\vert v \vert^2-d}{2 d } <B(v)(I-P)f>\\
%\end{align*}			
%and therefore 	alltogehter 		
%\begin{align*}
%(I-P)  (v f) = A(v)  u + \frac{B(v)}{2}   T +  v (I-P)(f) \\-  <A(v)(I-P)f > v - \frac{\vert v \vert^2-d}{2 d } <B(v)(I-P)f>
%\end{align*}
%This gives the equation
This allows us to write:
\begin{align*}
\epsilon^2 \partial_t (I-P)f+  \epsilon \nabla_x \cdot  [\A(v)  u + \frac{\B(v)}{2}   T +  (I-P)(v (I-P) f)]= -\frac{1}{\tau}(I-P) f+  \epsilon Q
 \end{align*}

Recognizing that $(I-P)f = \mathcal{O}(\epsilon)$ in the diffusive regime, we introduce the rescaled micro-variable $g = \frac{1}{\epsilon}(I-P)f$, obtaining:
\begin{align*}
\epsilon^2 \partial_t g+   \nabla_x \cdot  [\A(v)  u + \frac{\B(v)}{2}   T + \epsilon (I-P)(v g)]= -\frac{1}{\tau} g +   Q.
\end{align*}
Collecting these results, we obtain the full macro-micro system, remaining formally equivalent to the original kinetic equations (\ref{kinscaled}):
\begin{align}
\label{basic}
\epsilon\partial_t \rho + \nabla_x \cdot  u =0 \\
\epsilon\partial_t u + \epsilon \nabla_x \cdot <\A(v)  g> + \nabla_x (T+\rho) = 0 \nonumber\\
\epsilon \partial_t  T + \frac{2}{d}	\nabla_x \cdot u  + \epsilon  \frac{1}{d}\nabla_x \cdot < \B(v) g >= 0
%\partial_t T +\nabla_x \cdot< v \frac{v^2-3}{3} f_1 >= 0\\
% \partial_t S +\nabla_x \cdot< B(v) g >= 0\nonumber
\end{align}

\begin{align}
\label{g}
	\epsilon^2 \partial_t g+   \nabla_x \cdot  [\A(v)  u + \frac{\B(v)}{2}   T + \epsilon (I-P)(v g)]= -\frac{1}{\tau} g +   Q.
\end{align}

The final  relaxation system  is now obtained using the following closure procedure.
Adapted to the incompressible limit, we neglect  the term $\epsilon (I-P)(v g)$ in (\ref{g}), integrate  the equation for $g$ with respect to the polynomials $\A(v)$
and $\B(v)$ and approximate the moments of 
$Q (f)$ with respect to these polynomials by their limits for $\epsilon \rightarrow 0$ as given in (\ref{convQA}) and (\ref{convQB})

%all kinetic terms outside  the subspaces  spanned by the  collision invariants $1,v,v^2$ and  by the polynomials $A(v)$
%and $B(v)$. 
%Considering the above equations with the LBM-type collision operator and integrating the $g$-equation wrto
%$A(v)$ and $B(v)$  
Denoting the moments $\Theta = <\A(v) g>$ and $\Gamma = <\B(v ) g>$ and using the above approximations and properties of
$\A$ and $\B$ gives
 the hyperbolic relaxation system 
\begin{align}
	\label{relax1}
	\epsilon \partial_t \rho + \nabla_x \cdot  u  =0 \\
	\epsilon \partial_t u + \epsilon \nabla_x \cdot  \Theta  + \nabla_x  (\rho+T) = 0
	\label{relax2}\\
	%\partial_t T +\nabla_x \cdot< v \frac{v^2-3}{3} f_1 >= 0\\
	\partial_t  T + \frac{2}{\epsilon d}	\nabla_x \cdot u  +  \frac{1}{d}\nabla_x \cdot\Gamma= 0 \label{relax3}
\end{align}
and
\begin{align}
	\label{relax4}
	\epsilon^2 \partial_t \Theta+  \D(\nabla_x u ) &=\ - \frac{1}{\tau}  (\Theta -\A(u)) \\
\label{relax5}	\epsilon^2 \partial_t \Gamma+  (d+2) \nabla_x T&=\ -  \frac{1}{\tau} (\Gamma  - (d+2) u  T) 
\end{align}

We have obtained a $(d+2)+d+ (\frac{d(d-1)}{2}+d-1)=\frac{d^2}{2}+\frac{5d}{2} +1$ moment model for $\rho,u,T,\Theta,\Gamma$. That means,
we obtain  in 1-D a $4$ moment model, in  2-D a $8$ moment model and in 3D a $13$ moment model
(note that $\Theta$ is symmetric with trace zero).
In the following we investigate this system analytically and numerically discussing structural stability conditions for the system in Section 
\ref{strucstab} and presenting an asymptotic preserving numerical scheme for the system in Sections  \ref{ap}, \ref{ho} and \ref{spat}.

\begin{remark}
Note that hyperbolicity of the system follows directly from symmetrizability, which is proven in the next section.
The eigenvalue structure is simple. We have, for example, in 1D two positive and two negative simple eigenvalues
and  in 2D a double zero eigenvalue, a double positive and negative  as well as  a simple positive and negative  eigenvalue,

\end{remark}

\begin{remark}
Comparing the hyperbolic main part of our system to the one of the  linear Grad moment system 	\cite{struchtrup2003regularization, sarna2020convergence}, we observe, that they are equivalent in 1D, but not the same in higher dimensions. 
In contrast to the linear Grad approach the present approach is not a full expansion in terms of Hermite polynomials to the corresponding order. Instead, by neglecting the term $\epsilon (I-P)(v g)$, we have  neglected terms in the Hermite expansion. 
In particular, this results in missing terms in equations (\ref{relax4}) and (\ref{relax5}) in higher dimensions.
In the following we show analytically and numerically,  that the system is still appropriate as a relaxation system for the incompressible Navier-Stokes limit, for appropriately small values of the scaling parameter $\epsilon$.
\end{remark}

\begin{remark}
	Comparing the  system to the ones derived from discrete velocity models as in the Lattice-Boltzmann method,
	see \cite{JUNK,ChenDoolen1998,1stPaper2025},  one  observe the following. Again,  in 1D,   one can rewrite the model as a discrete velocity model with Gauss-Hermite points as discrete velocities. In contrast, this is not possible any more in higher dimensions.
	This can be easily seen, for example, in 2D, by observing, that the system matrices are not any more simultaneously diagonalisable, since they are not commuting.  This  would be the case, if they would have been derived by  a one-to-one relation of  discrete velocity and moment variables.
	This behaviour is shared with the linear Grad moment system.
\end{remark}

\section{Stability of the relaxation system}\label{sec3}
\label{strucstab}
In this section we discuss structural  stability properties of the relaxation system in the limit as $\epsilon \rightarrow  0$.
With $\tau=1$ the original model reads
\begin{align*}
	\epsilon \partial_t \rho + \nabla_x\cdot u&=0\\[2mm]
	\epsilon \partial_t u + \epsilon \nabla_x \cdot \Theta +\nabla_x(\rho+T)&=0\\[2mm]
	\partial_t T + \frac{2}{\epsilon d} \nabla_x\cdot u+\frac{1}{d}\nabla_x\cdot \Gamma &= 0\\[2mm]
	\epsilon^2 \partial_t \Theta + \nabla_x u + (\nabla_x u)^T - \frac{2}{d} \nabla_x\cdot u I & = -\Theta+u\otimes u -\frac{|u|^2}{d}I\\[2mm]
	\epsilon^2 \partial_t \Gamma + (d+2)\nabla_x T & = -\Gamma+(d+2)uT.
\end{align*}
By introducing $\widetilde{\Gamma}=\epsilon \Gamma$ and 
$\widetilde{\Theta}=\epsilon \Theta$ and dividing the equations by $\epsilon$, we obtain
\begin{align}
	\partial_t \rho + \frac{1}{\epsilon}\nabla_x\cdot u&=0 \label{eq1}\\[2mm]
	\partial_t u + \frac{1}{\epsilon} \nabla_x \cdot \widetilde{\Theta} + \frac{1}{\epsilon} \nabla_x(\rho+T)&=0\label{eq2}\\[2mm]
	\partial_t T + \frac{2}{\epsilon d} \nabla_x\cdot u+\frac{1}{\epsilon d}\nabla_x\cdot \widetilde{\Gamma} &= 0\label{eq3}\\[2mm]
	\partial_t \widetilde{\Theta} + \frac{1}{\epsilon}\left(\nabla_x u + (\nabla_x u)^T - \frac{2}{d} \nabla_x\cdot u I\right) & = \frac{1}{\epsilon^2}\left(-\widetilde{\Theta}+ \epsilon u\otimes u - \epsilon \frac{|u|^2}{d}I\right)\label{eq4}\\[2mm]
	\partial_t \widetilde{\Gamma} + \frac{d+2}{\epsilon}\nabla_x T & = \frac{1}{\epsilon^2}\Big(-\widetilde{\Gamma}+\epsilon(d+2)uT\Big).\label{eq5}
\end{align}
For simplicity, we drop $\widetilde{~}$ ~in what follows. 

Note that $\Theta(x,t)$ is symmetric and trace free if the initial data $\Theta(x,0)$ is so. We denote the variables $(\rho,u,T,\Theta,\Gamma)$ as $U$ and rewrite the system in matrix form as 
\begin{align}
	\label{relaxcompact}
	U_t+\frac{1}{\epsilon}\sum_{j=1}^dA_jU_{x_j}=\frac{1}{\epsilon^2}Q(\epsilon,U).
\end{align}
In the following we investigate the system and prove that the stability conditions from \cite{Yong99,Cor-Yong,Peng}
are satisfied.
For the right-hand side, we have
$$
Q(\epsilon,U)=\begin{pmatrix}
	0_{(d+2)\times1} \\[2mm]
	-{\Theta}+ \epsilon u\otimes u - \epsilon \dfrac{|u|^2}{d}I\\[3mm]
	-{\Gamma}+\epsilon(d+2) uT
\end{pmatrix}. 
$$
Clearly, the source term satisfies
$$
Q(0,U)=\begin{pmatrix}
	0\\
	q(U)
\end{pmatrix}\qquad q(U)=-\begin{pmatrix}
	\Theta\\
	\Gamma
\end{pmatrix}.
$$
Accordingly, we write $U^I=(\rho,u,T)$ and $U^{II}=(\Theta,\Gamma)$. Then we check that
$$
q(U)=0\Longleftrightarrow U^{II}=0,\qquad \frac{\partial q(U^I,0)}{\partial U^{II}}=-I.
$$ 
%Therefore the condition (ii) is satisfied. 

For the left-hand side, we denote $A(\xi)=\sum_{j=1}^d\xi_jA_j$ with $\xi=(\xi_1,\xi_2,\dots,\xi_d)^T\in\mathbb{R}^d$ and try to write the expression of $A(\xi)$. From \eqref{eq1}--\eqref{eq5}, we have 
\begin{align*}
	A(\xi)=\begin{pmatrix}
		~0_{1\times 1} & ~\xi^T & 0 & 0 & ~0 \\[2mm]
		~\xi & ~0_{d\times d} & \xi & C & ~0 \\[2mm]
		~0 & ~\frac{2}{d}\xi^T & 0_{1\times 1} & 0 & ~\frac{1}{d}\xi^T \\[2mm]
		~0 & ~D & 0 & 0_{d^2\times d^2} & ~0 \\[2mm]
		~0 & ~0 & (d+2)\xi & 0 & ~0_{d\times d} 
	\end{pmatrix}
\end{align*}
with
\begin{align*}
	C=\begin{pmatrix}
		\xi^T & 0 & 0 & 0 \\[2mm]
		0 & \xi^T & 0 & 0\\[1mm]
		0 & 0 & \ddots & 0\\[2mm]
		0 & 0 & 0 & \xi^T
	\end{pmatrix}\in \mathbb{R}^{d\times d^2}.
\end{align*}
\begin{remark}
	Note that due to the symmetric and trace-free property, the expression for $C$ is not unique.  
\end{remark}

In order to express $D$, we write 
$$
\nabla_x u + (\nabla_x u)^T - \frac{2}{d} \nabla_x\cdot u I = \sum_{k=1}^d\mathcal{U}^{(k)}
$$
with 
$$
\mathcal{U}^{(k)}_{ij} = \left\{
\begin{array}{lc}
	(2-\dfrac{2}{d})\partial_{x_k}u_k  &  \quad i=j=k\\[4mm]
	-\dfrac{2}{d}\partial_{x_k}u_k  &  \quad i=j\neq k\\[5mm]
	~\partial_{x_j}u_k &  \quad i\neq j,~i=k\\[3mm]
	~\partial_{x_i}u_k&  \quad i\neq j,~j=k.
\end{array}
\right.
$$
The other components are zero.
Then we express the $((i-1)\times d+j)$-th row and the $k$-th column component of $D$ by
$$
D_{(i-1)\times d+j,k} = \left\{
\begin{array}{lc}
	(2-\dfrac{2}{d})\xi_k  &  \quad i=j=k\\[5mm]
	-\dfrac{2}{d}\xi_k  &  \quad i=j\neq k\\[5mm]
	\xi_j &  \quad i\neq j,~i=k\\[3mm]
	\xi_i &  \quad i\neq j,~j=k.
\end{array}
\right.
$$
For example, when $d=2$ we have 
$$
D^T=\begin{pmatrix}
	\xi_1 & \xi_2 & \xi_2 & -\xi_1 \\[1mm]
	-\xi_2 & \xi_1 & \xi_1 & \xi_2
\end{pmatrix}
$$
and when $d=3$ we have 
$$
D^T=
\begin{pmatrix}
	\frac{4}{3}\xi_1 & \xi_2 & \xi_3 & \xi_2 & -\frac{2}{3}\xi_1 & 0 & \xi_3 & 0 & -\frac{2}{3}\xi_1 \\[2mm]
	-\frac{2}{3}\xi_2 & \xi_1 & 0 & \xi_1 & \frac{4}{3}\xi_2 & \xi_3 & 0 & \xi_3 & -\frac{2}{3}\xi_2 \\[2mm]
	-\frac{2}{3}\xi_3 & 0 & \xi_1 & 0 & -\frac{2}{3}\xi_3& \xi_2 & \xi_1 & \xi_2 & \frac{4}{3}\xi_3 
\end{pmatrix}.
$$
% Denote the $((i-1)\times d+j)$-th row and the $k$-th column component of $2C^T$ by $C_{ijk}$. Then we have
% $$
% C_{ijk} + C_{jik} = \left\{
% \begin{array}{lc}
	%    2\xi_j  &  \quad i=k\\[3mm]
	%    0  &  \quad i\neq k.
	% \end{array}
% \right.
% $$
% Due to the symmetric property, we can introduce $\widetilde{C}_{ijk}$ such that 
% $$
% \widetilde{C}_{ijk} = {C}_{ijk} - a \qquad \widetilde{C}_{jik} = {C}_{jik} + a
% $$ 
% for any constant $a$ and $i\neq j$.
% On the other hand, using the trace-free property, we write 
% $$
% \widetilde{C}_{iik} = {C}_{iik} - a \qquad \widetilde{C}_{jjk} = {C}_{jjk} + a_j ~~\text{for all $j\neq i$ such that}~~a=\sum_{j\neq i}a_j. 
% $$ 
Note that due to the symmetric and trace-free property, 
we may also replace the matrix $C$ by $\widetilde{C}$ such that
$$
\widetilde{C}=\frac{1}{2}D^T.
$$
For example, in the 2D case, we can use the symmetric and trace-free property $\widetilde{\Theta}_{12}=\widetilde{\Theta}_{21}$ and $\widetilde{\Theta}_{11}=-\widetilde{\Theta}_{22}$ to rewrite the term 
	$$
	\nabla_x\cdot \widetilde{\Theta}=\begin{pmatrix}
		\partial_{x_1}\widetilde{\Theta}_{11} + \partial_{x_2}\widetilde{\Theta}_{21}\\[1mm]
		\partial_{x_1}\widetilde{\Theta}_{12} + \partial_{x_2}\widetilde{\Theta}_{22}
	\end{pmatrix}=\begin{pmatrix}
		\frac{1}{2}\partial_{x_1}\widetilde{\Theta}_{11} +\frac{1}{2}\partial_{x_2}\widetilde{\Theta}_{21}+\frac{1}{2}\partial_{x_2}\widetilde{\Theta}_{12}- \frac{1}{2}\partial_{x_1}\widetilde{\Theta}_{22}\\[2mm]
		-\frac{1}{2} \partial_{x_2}\widetilde{\Theta}_{11}+\frac{1}{2}\partial_{x_1}\widetilde{\Theta}_{21} + \frac{1}{2}\partial_{x_1}\widetilde{\Theta}_{12} + \frac{1}{2}\partial_{x_2}\widetilde{\Theta}_{22} 
	\end{pmatrix}.
	$$
	Now we see that the corresponding matrix is $\widetilde{C}=\frac{1}{2}D^T$. The 3D case is similar. We find that $C$ and $D$ occupy symmetric block positions in the matrix $A(\xi)$. Thus this expression of $\widetilde{C}$ is more convenient to construct the symmetrizer. 
Indeed, defining 
$$
A_0=\text{diag}\left(I_{d+1},~\frac{d}{2},~\frac{1}{2}I_{d^2},~\frac{1}{2(d+2)}I_d\right)
$$ 
we  check that $A_0A(\xi)$ is symmetric. Moreover, we have
$$
A_0Q_U(0,U)=-\text{diag}\left(0_{d+2},~\frac{1}{2}I_{d^2},~\frac{1}{2(d+2)}I_d\right).
$$
To summarize, we have shown that the relaxation system (\ref{relaxcompact})
fulfills the following 
 structural stability conditions, see  \cite{Yong99,Cor-Yong,Peng}
\begin{itemize}
	\item[(i)] There exists a symmetric positive definite matrix $A_0$, called symmetrizer, such that for all $j=1,2,\dots,d$,
	$$
	A_0A_j=A_j^TA_0.
	$$
	\item[(ii)] The source term satisfies 
	$$
	Q(0,U)=\begin{pmatrix}
		0\\[1mm]
		q(U)
	\end{pmatrix},\qquad q:G\rightarrow \mathbb{R}^r~\text{a smooth function},~1\leq r\leq n.
	$$
	With the same partition, we denote $U=\begin{pmatrix}
		U^I\\
		U^{II}
	\end{pmatrix}$ with $U^I\in \mathbb{R}^{n-r}$ and $U^{II}\in \mathbb{R}^{r}$. The function $q(U)$ satisfies 
	$$
	q(U)=0\Longleftrightarrow U^{II}=0,\qquad \frac{\partial q(U^I,0)}{\partial U^{II}} ~~\text{is invertible for all}~U^I.
	$$ 
	\item[(iii)] The symmetrizer $A_0$ and the source term $Q(U)$ are coupled by the inequality
	$$
	A_0Q_U(0,U^I,0)+Q_U^T(0,U^I,0)A_0 \leq -c_0\begin{pmatrix}
		0 & \\
		& I_r
	\end{pmatrix}.
	$$
\end{itemize}

\begin{remark}
	According to \cite{Peng} a proof of convergence as $\epsilon $ goes to $0$  of the relaxation system to the incompressible Navier-Stokes equations requires besides the above structural stability conditions further  assumptions.  In particular, note that (i) shows that the system is symmetrizable, (ii) is equivalent to
	conditions (1.4) and (1.5) in \cite{Peng} .
	The assumptions (H1) and (H3) in \cite{Peng} are satisfied automatically since all $A_j$ are constant matrices. (iii) is equivalent to condition (H2) in \cite{Peng} . 
	  The missing Assumptions (H4)--(H6) in \cite{Peng}  are technical assumptions on the initial conditions and further conditions on the regularity of the asymptotic expansion, which are not discussed here.
\end{remark}
\begin{remark}
	\label{hyplimit}
	The situation is completely different, if we would consider the relaxation system with $\epsilon=1$ in the limit  $\tau$ going to $0$.
	In this case the corresponding stability conditions are not the same as in the present case and are in general not fulfilled  by the 
    relaxation system derived here. Moreover, it is easy to see, that for such a limit the formal limit equations are not the nonlinear Euler equations and not even always hyperbolic. 
\end{remark}

\section{Asymptotic-preserving algorithms - Time discretization}\label{sec4}
\label{ap}

In the following  an asymptotic preserving algorithm for the relaxation system working uniformly  in the Navier-Stokes limit  is developed and numerically investigated. We consider in this section first-order time discretizations following \cite{2ndPaper}. Higher order approaches and the spatial discretization are described in the next section.
The relaxation system for $\rho,u,T,\Theta,\Gamma$ with the notation  $p=\rho+T$ and $S= dp-(d+2) \rho= dT-2\rho$ is given by 
\begin{align*}
	%\label{Pf0}
	\partial_t \rho + \frac{1}{\epsilon}\nabla_x \cdot  u  =0 \\
% \partial_t p + \frac{d+2}{\epsilon d }\nabla_x \cdot  u  +  \frac{1}{d}\nabla_x \cdot \Gamma=0 \\
\partial_t u  + \frac{1}{\epsilon}\nabla_x  p +  \nabla_x \cdot \Theta   = 0\\
%\partial_t T +\nabla_x \cdot< v \frac{v^2-3}{3} f_1 >= 0\\
\partial_t  T + \frac{2}{\epsilon d}	\nabla_x \cdot u  +  \frac{1}{d}\nabla_x \cdot \Gamma= 0
\end{align*}
and
\begin{align*}
	\epsilon^2 \partial_t \Theta+  \D(\nabla_x u )&=\ - \frac{1}{\tau}  \Theta + \frac{1}{\tau} \A(u) \\
	\epsilon^2 \partial_t \Gamma+  (d+2) \nabla_x T&=\ -  \frac{1}{\tau} \Gamma  + \frac{1}{\tau} (d+2) u  T.
\end{align*}
We consider two  semi-implicit algorithms, treating first  the small Knudsen number and  low Mach number limit implicitly
and second including  an additional implicit discretization of the diffusive terms to take into account the parabolic stiffness of the problem in the limit.

%\subsection{Asymptotic preserving time discretization}

%We consider the following  semi-implicit time discretizations.

\subsection{Semi-implicit discretization for  low Mach and small Knudsen numbers}
Here, only the stiff low-Mach limit and the stiffness in the Knudsen number will be treated implicitly. The scheme for the limit equations is a projection scheme treating the viscous terms explicitly. 
We separate  the implicit and explicit components as follows
\begin{equation}
\begin{split} \label{implicit-explicit}
\partial_t \rho +  \underbrace{\frac{1}{\varepsilon}\nabla_x \cdot u}_{implicit} &= 0 \\
\partial_t u + \underbrace{ \nabla_x \cdot  \Theta + \frac{1}{\varepsilon} \nabla_x p}_{implicit} &= 0 \\
\partial_t T +  \underbrace{\frac{2}{\varepsilon d} \nabla_x \cdot u + \frac{1}{d} \nabla_x \cdot \Gamma}_{implicit} &= 0\\
\partial_t p +  \underbrace{\frac{d+2}{\varepsilon d} \nabla_x \cdot u + \frac{1}{d} \nabla_x \cdot \Gamma}_{implicit} &= 0\\
\varepsilon^2 \partial_t \Theta +  \underbrace{\D(\nabla_x u )}_{explicit} &= -  \underbrace{\frac{1}{\tau} \Theta}_{implicit} + \underbrace{\frac{1}{\tau} \mathcal{A}(u)}_{explicit}\\
\varepsilon^2 \partial_t \Gamma +  \underbrace{(d+2) \nabla_x T}_{explicit} &= -  \underbrace{\frac{1}{\tau} \Gamma}_{implicit} + \underbrace{\frac{1}{\tau} (d+2) uT}_{explicit}
\end{split}
\end{equation}
Thus, for the pressure we have 
\begin{equation}
	\begin{split} 
		\partial_t p +  \underbrace{\frac{d+2}{\varepsilon d} \nabla_x \cdot u + \frac{1}{d} \nabla_x \cdot \Gamma}_{implicit} &= 0
	\end{split}
\end{equation}

%For a simpler discussion we use 
%the splitting
%
%
%
%{\bf Step 1:}
%\begin{align*}
%	%\label{Pf0}
%	 \partial_t \rho   =0 \\
%%	\partial_t p   +  \frac{1}{d}\nabla_x \cdot  \Gamma=0 \\
%	\partial_t u  +  \nabla_x \cdot \Theta   = 0\\
%	%\partial_t T +\nabla_x \cdot< v \frac{v^2-3}{3} f_1 >= 0\\
%	\partial_t  T  +  \frac{1}{d}\nabla_x \cdot \Gamma= 0
%\end{align*}
%
%
%
%{\bf Step  2:}
%\begin{align}
%	\partial_t \rho + \frac{1}{\epsilon}\nabla_x \cdot  u  =0 \\
%%	\partial_t p + \frac{d+2}{d\epsilon}\nabla_x \cdot  u  =0 \\
%	\partial_t u  + \frac{1}{\epsilon}\nabla_x  p   = 0\\
%	%\partial_t T +\nabla_x \cdot< v \frac{v^2-3}{3} f_1 >= 0\\
%	\partial_t  T + \frac{2}{\epsilon d}	\nabla_x \cdot u  = 0
%\end{align} 
%Note that for this step  $ \partial_t S =0$ with  $S= d p - (d+2)\rho$.

Writing a first order time discretization in a stepwise form we have  

\begin{align}
%\label{Pf0}
\frac{1}{\Delta t } (u^{n+1/2}- u^{n})+   \nabla_x \cdot \Theta^{n+1} =0 \nonumber\\
\frac{1}{\Delta t } (u^{n+1}- u^{n+1/2})+ \frac{1}{\epsilon} \nabla_x p^{n+1}=0\label{p1}\\
\frac{1}{\Delta t } (T^{n+1/2}- T^{n})+ \frac{1}{d}\nabla_x\cdot \Gamma^{n+1} = 0\nonumber\\
\frac{1}{\Delta t } (T^{n+1}- T^{n+1/2})+ \frac{2}{\epsilon d}\nabla_x\cdot u^{n+1} = 0\nonumber\\
\frac{1}{\Delta t } (\rho^{n+1}- \rho^{n})+ \frac{1}{\epsilon }\nabla_x\cdot u^{n+1} = 0\nonumber.
\end{align}
This leads for the pressure to
\begin{align}
	\frac{1}{\Delta t } (p^{n+1/2}- p^{n})+ \frac{1}{d}\nabla_x\cdot \Gamma^{n+1} &= 0\nonumber\\
	\frac{1}{\Delta t } (p^{n+1}- p^{n+1/2})+ \frac{(d+2)}{\epsilon d}\nabla_x\cdot u^{n+1} &= 0\label{p2}.
\end{align}
Moreover,
\begin{align*}
\frac{	\epsilon^2}{\Delta t }    (\Theta^{n+1}-\Theta^n)+ \D (\nabla_x u^n)=\ - \frac{1}{\tau}  \Theta^{n+1} + \frac{1}{\tau} \A(u^n) 
\end{align*}
and
\begin{align*}
\frac{	\epsilon^2}{\Delta t } (\Gamma^{n+1}-\Gamma^n)+  (d+2)  \nabla_x T^{n} = -\frac{1}{\tau}  \Gamma^{n+1} +   \frac{1}{\tau} (d+2)u^nT^n .
\end{align*}

which gives 
with $\gamma = \frac{1}{\epsilon^2 + \frac{\Delta t}{\tau} }=  \frac{\tau }{\epsilon^2 \tau + \Delta t}$

\begin{align}
\label{Gamma}
\Gamma^{n+1}= \epsilon^2 \gamma \Gamma^n+ (d+2)\Delta t \gamma [\frac{1}{\tau} u^n T^n- 
\nabla_x  T^n ]
\end{align}

\begin{align}
\label{Theta}
\Theta^{n+1}= \epsilon^2 \gamma\Theta^n+ \Delta t \gamma \frac{1}{\tau} \A(u^n)- 
\Delta t \gamma \D(\nabla_x u^n) 
\end{align}

The discrete equations can be rearranged in a more efficient way:
Compute first $u^{n+1/2},T^{n+1/2}$ using 
\begin{align*}
%\label{Pf0}
\frac{1}{\Delta t } (u^{n+1/2}- u^{n})+   \nabla_x \cdot \Theta^{n+1} =0 \nonumber\\
\frac{1}{\Delta t } (T^{n+1/2}- T^{n})+ \frac{1}{d}\nabla_x\cdot \Gamma^{n+1} = 0\nonumber
\end{align*}
or 
\begin{align}
\label{e1}
\frac{1}{\Delta t } (u^{n+1/2}- u^{n})+  \epsilon^2 \gamma  \nabla_x \cdot \Theta^n - \Delta t \gamma  
[\Delta_x u^n   +  \frac{d-2}{d} \nabla_x (\nabla_x \cdot  u^n)]+ \Delta t  \frac{1}{\tau} \gamma \nabla_x \cdot \A(u^n )=0 
\end{align}
and
\begin{align}
\label{e2}
\frac{1}{\Delta t } (T^{n+1/2}- T^{n})+ \frac{\epsilon^2 \gamma}{d}\nabla_x \cdot\Gamma^n - \Delta t \gamma  \frac{d+2}{d}\Delta_x    T^{n}  
+ \Delta t   \gamma  \frac{d+2}{d} \frac{1}{\tau}\nabla_x\cdot (uT)^n	= 0.
\end{align}
Then, we obtain $p^{n+1/2}$ simply via 
\begin{align}
\label{pn12}
p^{n+1/2}- p^{n}=T^{n+1/2}- T^{n}.
\end{align}
Moreover,
\begin{align}
\label{rhop}
\frac{d+2}{d} (\rho^{n+1}- \rho^{n})=p^{n+1}- p^{n+1/2}.
\end{align}
Combining (\ref{p1}) and (\ref{p2})  gives
\begin{align}
\label{ppoisson}
\frac{\epsilon^2}{\Delta t^2} p^{n+1}- \frac{d+2}{d}\Delta_x  p^{n+1}  = \frac{\epsilon^2}{\Delta t^2} p^{n+1/2}   - \frac{(d+2) \epsilon }{d \Delta t } \nabla_x \cdot u^{n+1/2}.
\end{align}
Then, (\ref{rhop}) can again be used to compute $\rho^{n+1}$ and 
$T^{n+1}$ via
\begin{align}
\label{Tp}
T^{n+1}- T^{n+1/2}=\frac{2}{d} (\rho^{n+1}- \rho^{n}).
\end{align}

The first order numerical algorithm is finally given in Algorithm \ref{alg1}.

\begin{algorithm}\caption{First order AP algorithm for relaxation scheme.}
	\label{alg1}
%	Start with $\rho^n,u^n,T^n$ and $\Gamma^{n},\Theta^{n}$ 
\textbf{Input:} State variables $\rho^n,u^n,T^n$ and $\Gamma^{n},\Theta^{n}$ at time $t^n$.
	\begin{enumerate}
		\item Compute $\Gamma^{n+1},\Theta^{n+1}$  via (\ref{Gamma}) and (\ref{Theta}).\\
		\item Compute
		$u^{n+1/2}, T^{n+1/2}$ explicitly via 	(\ref{e1}) and (\ref{e2}).
	 Compute $p^{n+1/2}$ via (\ref{pn12}).
		\item Compute  $\tilde  p^{n+1}=\frac{p^{n+1}}{\epsilon}$ using (\ref{ppoisson})  via 
\begin{align}
\frac{\epsilon^2}{\Delta t^2} \tilde p^{n+1}- \frac{d+2}{d}\Delta_x  \tilde p^{n+1}  = \frac{\epsilon}{\Delta t^2} p^{n+1/2}   - \frac{(d+2) }{d \Delta t } \nabla_x \cdot u^{n+1/2}
\end{align}
and $u^{n+1} $ as %using  (\ref{u}) as
$$u^{n+1}= u^{n+1/2}-  \Delta t \nabla_x  \tilde p^{n+1}.$$
%		
%		\begin{align*}
%			\frac{\epsilon^2}{\Delta t^2} p^{n+1}- \frac{d+2}{d}\Delta_x p^{n+1}  = \frac{\epsilon^2}{\Delta t^2}p^{n+1/2}   - \frac{(d+2) \epsilon}{d \Delta t } \nabla_x \cdot u^{n+1/2}
%		\end{align*}
%		and
%		\begin{align}
%			u^{n+1} =  u^{n+1/2} -  \frac{\Delta t}{\epsilon} \nabla_x p^{n+1}
%		\end{align}
\item 
	Compute  $\rho^{n+1}$ via 
	\begin{align}
		\frac{d+2}{d} (\rho^{n+1}- \rho^{n})=\epsilon \tilde p^{n+1}- p^{n+1/2}
	\end{align}
	and 
	$ T^{n+1}$ via  (\ref{Tp}).
%	 as
%	\begin{align}
%	\rho^{n+1} = \frac{d}{d+2}\epsilon \bar p^{n+1} - \frac{1}{d+2}S^{n+1/2} \\
%	T^{n+1}=\epsilon \bar p^{n+1} -\rho^{n+1}
%	\end{align}
%		
	\end{enumerate}	
\end{algorithm}

\begin{remark}

As $\epsilon \rightarrow 0$  we obtain   with $\gamma \rightarrow \frac{\tau}{\Delta t }$ 
		\begin{align*}
			\Gamma^{n+1}=  (d+2) [ u^n T^n- \tau
			\nabla_x  T^n ]
		\end{align*}
		
		\begin{align*}
			\Theta^{n+1}= \A(u^n)- 
			\tau\D(\nabla_x u^n )
		\end{align*}

	Then,
		$T^{n+1/2}$ is computed via 	
		\begin{align*}
		T^{n+1/2} 
		= T^{n} -  \Delta t \frac{d+2}{d}[\nabla_x \cdot (u^nT^n)-  \tau \Delta_x T^n]
		\end{align*}
		and
		\begin{align}
		u^{n+1/2} =  u^n - \Delta t [\nabla_x \cdot \A(u^n ) - \tau \Delta_x u^n]
		\end{align}
%			\begin{align*}
%		p^{n+1/2} =  T^{n+1/2} + \rho^{n} \\
%		S^{n+1/2}=   d T^{n+1/2} -2 \rho^{n}
%		\end{align*}

Then,
\begin{align*}
			 \Delta_x \tilde p^{n+1}  =  \frac{1 }{ \Delta t } \nabla_x \cdot u^{n+1/2}
		\end{align*} 
	and 
	$$
	u^{n+1} = u^{n+1/2}- \Delta t \nabla_x \tilde p
	$$	
	which gives 
	$\nabla_x \cdot u^{n+1} =0$.
	Finally, 
	\begin{align*}
		T^{n+1}-T^n &=	T^{n+1}-T^{n+1/2}+T^{n+1/2}-T^n \\
		&= (-\frac{2}{d+2}+1)(T^{n+1/2}-T^n )=-  \Delta t[\nabla_x \cdot (u^nT^n)-  \tau \Delta_x T^n]
	\end{align*}
	due to
	\begin{align*}
		T^{n+1}-T^{n+1/2} 
		= -\frac{2}{d+2}(T^{n+1/2}-T^n ).
	\end{align*}
	in the limit $\epsilon \rightarrow 0$ .
%	using 
%	\begin{align*}
%		S^{n+1/2} = d T^{n+1/2} -2 \rho^n
%	\end{align*}
% gives
%		\begin{align*}
%	S^{n+1} -S^n=  S^{n+1/2} -S^n
%= d (T^{n+1/2} -T^{n})
%	\end{align*}
%	and therefore 
%		\begin{align*}
%		S^{n+1} = S^n-  \Delta t (d+2)[\nabla_x \cdot (u^nT^n)-  \tau \Delta_x T^n].
%	\end{align*}
%Then
%\begin{align*}
%		T^{n+1}=\epsilon \bar p^{n+1} -\rho^{n+1}
%	\end{align*}
Thus, 	for $\epsilon \rightarrow 0$
	\begin{align*}
	 T^{n+1}= T^n-  \Delta t[\nabla_x \cdot (u^nT^n)-  \tau \Delta_x T^n]
	\end{align*}
	We have  obtained 	a standard projection scheme for the incompressible thermal Navier-Stokes system with an explicit treatment of the diffusive terms in the limit equations.

\end{remark}

\subsection{Additional implicit discretization treating the parabolic stiffness}
\label{pstiff}

	The algorithm is the same as before, but instead of 	(\ref{e1}) and (\ref{e2}) we compute
	
			\begin{align*}
			&	\frac{1}{\gamma \Delta t^2}T^{n+1/2}- \frac{d+2}{d} \Delta_x T^{n+1/2}=
			\frac{1}{\gamma \Delta t^2}T^{n}-\frac{\epsilon^2}{d \Delta t } \nabla_x \cdot \Gamma^n- \frac{d+2}{ d \tau}
			\nabla_x \cdot (u^n T^n)
		\end{align*}
	and 
			\begin{align*}
			\frac{1}{\Delta t^2 \gamma}u^{n+1/2}- \Delta_x u^{n+1/2}=	\frac{1}{\Delta t^2 \gamma}u^{n}
			- \frac{\epsilon^2}{\Delta t } \nabla_x \cdot \Theta^n +
			\frac{d-2}{d}\nabla_x (\nabla_x \cdot u^{n}) - \frac{1}{\tau  } \nabla_x \cdot \A(u^n)
		\end{align*}
and then 
\begin{align*}
	\Gamma^{n+1}= \epsilon^2 \gamma \Gamma^n+ (d+2)\Delta t \gamma [\frac{1}{\tau} u^n T^n- 
	\nabla_x  T^{n+1/2} ]
\end{align*}
and
\begin{align*}
	\Theta^{n+1}= \epsilon^2 \gamma\Theta^n+ \Delta t \gamma \frac{1}{\tau} \A(u^n)- 
	\Delta t \gamma\D(\nabla_x u^{n+1/2} ) 
\end{align*}

with $\gamma = \frac{1}{\epsilon^2 + \frac{\Delta t}{\tau} }$.
%%%%%%%%%%%%%%

\begin{remark}
For $\epsilon \rightarrow 0$ this is with $\gamma \rightarrow  \frac{\tau}{\Delta t}$
		\begin{align*}
			&	T^{n+1/2}=
			T^{n}- \Delta t  \frac{d+2}{d}
			\nabla_x \cdot(u^n T^n) + \Delta t \frac{d+2}{d} \tau \Delta_x T^{n+1/2}
		\end{align*}
	and
		\begin{align*}
			u^{n+1/2}&=	u^{n}
			 - \Delta t  \nabla_x \cdot \A(u^n)+ \Delta t \tau \Delta_x u^{n+1/2}
		\end{align*}
		and otherwise the same scheme as before.
%\item Compute  
%\begin{align*}
%	d T^{n+1}-2 \rho^{n+1} = d T^{n+1/2} - 2 \rho^n = d T^{n} - 2 \rho^n \\ - 	 \Delta t  (d+2)
%	\nabla_x \cdot(u^n T^n) + \Delta t (d+2) \tau \Delta_x T^{n+1/2} 
%\end{align*}	

%\begin{align*}
%	T^{n+1} = -\rho^{n+1}  
%\end{align*}
%this is 
%\begin{align*}
%	T^{n+1}= T^{n}  -	 \Delta t  
%	\nabla_x \cdot(u^n T^n) + \Delta t \tau \Delta_x T^{n+1/2} 
%\end{align*}	
	
%	\end{enumerate}
Thus, we obtain a projection scheme with implicit treatment of the diffusive terms in the limit.

\end{remark}

\section{AP scheme - Higher order time-discretizations}\label{sec5}
\label{ho}
The main idea is to use a suitable combination of implicit–explicit (IMEX) Runge–Kutta discretizations \cite{IMEX_PR,Bosc}. The main advantage of the proposed strategy is that, compared to a fully explicit method, the time step is not restricted by the stiffness of the system, while compared to a fully implicit scheme, the solution procedure is more straightforward, since the system to be solved is less involved. We will show that this choice is sufficient to obtain a projection scheme for the incompressible Navier–Stokes equations in the mean–free–path limit. The general IMEX–RK schemes can be conveniently represented by a double Butcher tableau of the following form:
\begin{align}
	\text{Explicit: } 
	\begin{array}{c|c}
		\tilde{c} & \tilde{A} \\ \hline \\ [-1.9ex]
		& \tilde{w}^T
	\end{array}
	\qquad
	\text{Implicit: }
	\begin{array}{c|c}
		c & A \\ \hline \\[-1.9ex]
		& w^T
	\end{array}
\end{align}
For the explicit scheme, $\tilde{A}$ is an $s \times s$ lower–triangular matrix with zero diagonal entries.
For the implicit scheme, the $s \times s$ matrix $A$ is restricted to the class of diagonally implicit Runge–Kutta (DIRK) methods, which means that $a_{ij} = 0$ for $j > i$. The vectors $w$ and $\tilde{w}$ contain the quadrature weights used to combine the internal stages of the Runge–Kutta method, while the coefficients $c$ and $\tilde{c}$ are defined as
\[c_i = \sum_{j=1}^{i} a_{ij}, \quad \tilde{c}_i = \sum_{j=1}^{i-1} \tilde{a}_{ij}.\]
The following definitions characterize the IMEX schemes and the properties we aim to consider in this work. For further details we refer for instance to \cite{IMEX_PR}.
\begin{definition}
	An IMEX RK method is said to be of type A, if the matrix $A \in \mathbb{R}^{s \times s}$ is invertible, or equivalently $a_{ii} \neq 0$, $i = 1,\dots,s$.
\end{definition}
\begin{definition}
	An IMEX RK method is said to be of type CK, if the matrix $A \in \mathbb{R}^{s \times s}$ can be written as
	\begin{align*}A=\left(
		\begin{matrix}
			0 & 0 \\ a & \hat{A}
		\end{matrix}\right)
	\end{align*}
	with $a = (a_{21},\dots,a_{s1})^T \in \mathbb{R}^{(s-1)}$ and the submatrix $\hat{A} \in \mathbb{R}^{(s-1) \times (s-1)}$ invertible. If additionally $a = 0$, the scheme is said to be of type ARS.
\end{definition}
\begin{definition}
	An IMEX RK method is called implicitly stiffly accurate (ISA), if the corresponding DIRK method is stiffly accurate, which means that
	\begin{align*}
		a_{si} = w_i, \qquad i = 1,\dots,s.
	\end{align*}
	If additionally the explicit method satisfies
	\begin{align}
		\tilde{a}_{si} = \tilde{w}_i, \qquad i = 1,\dots,s,
	\end{align}
	the IMEX RK method is called globally stiffly accurate (GSA).
\end{definition}
Note that the numerical solution of a GSA IMEX RK scheme coincides exactly with the last internal stage \cite{IMEX_PR},\cite{Hairer}. We now apply the above general IMEX-RK approach to the moment system.
We proceed by separating the implicit and the explicit components as before \eqref{implicit-explicit}.
%
%)following:
%\begin{equation}
%	\begin{split} %\label{implicit-explicit}
%		\partial_t \rho +  \underbrace{\frac{1}{\varepsilon}\nabla_x \cdot u}_{implicit} &= 0 \\
%		\partial_t u + \underbrace{ \nabla_x \cdot \langle \Theta + \frac{1}{\varepsilon} \nabla_x p}_{implicit} &= 0 \\
%		\partial_t T +  \underbrace{\frac{2}{\varepsilon d} \nabla_x \cdot u + \frac{1}{d} \nabla_x \cdot \Gamma}_{implicit} &= 0\\
%		\partial_t p +  \underbrace{\frac{d+2}{\varepsilon d} \nabla_x \cdot u + \frac{1}{d} \nabla_x \cdot \Gamma}_{implicit} &= 0\\
%		\varepsilon^2 \partial_t \Theta +  \underbrace{\D(\nabla_x u )}_{explicit} &= -  \underbrace{\frac{1}{\tau} \Theta}_{implicit} + \underbrace{\mathcal{A}(u)}_{explicit}\\
%		\varepsilon^2 \partial_t \Gamma +  \underbrace{(d+2) \nabla_x T}_{explicit} &= -  \underbrace{\frac{1}{\tau} \Gamma}_{implicit} + \underbrace{(d+2) uT}_{explicit}
%	\end{split}
%\end{equation}
So, we obtain for the internal stages: 
\begin{equation}
	\begin{split} \label{InternalStages}
		\rho^{(i)} &= \rho^n - \Delta t \frac{1}{\varepsilon} \sum_{j = 1}^i a_{ij} \nabla_x \cdot u^{(j)} \\
		u^{(i)} &= u^n - \Delta t  \sum_{j = 1}^i \left( a_{ij} \nabla_x \cdot \Theta^{(j)} + \frac{1}{\varepsilon} a_{ij} \nabla_x p^{(j)}\right) \\
		T^{(i)} &= T^n - \Delta t  \sum_{j = 1}^i \left( \frac{1}{d} a_{ij} \nabla_x \cdot \Gamma^{(j)}  + \frac{2}{d \varepsilon} a_{ij} \nabla_x \cdot u^{(j)}\right) \\
		p^{(i)} &= p^n - \Delta t  \sum_{j = 1}^i \left( \frac{1}{d} a_{ij} \nabla_x \cdot \Gamma^{(j)}  + \frac{d+2}{d \varepsilon} a_{ij} \nabla_x \cdot u^{(j)}\right) \\
		\Theta^{(i)} &= \Theta^n - \frac{\Delta t}{\varepsilon^2 \tau} \sum_{j=1}^{i-1} \tilde{a}_{ij}\left( \tau \D(\nabla_x u^{(j)} ) - \mathcal{A}(u^{(j)}) \right) - \frac{\Delta t}{\varepsilon^2 \tau} \sum_{j=1}^i a_{ij} \Theta^{(j)}\\
		\Gamma^{(i)} &= \Gamma^n - \frac{\Delta t}{\varepsilon^2\tau} \sum_{j=1}^{i-1} \tilde{a}_{ij}\left( (d+2)\tau \nabla_x T^{(j)} - (d+2)u^{(j)}T^{(j)} \right) - \frac{\Delta t}{\varepsilon^2 \tau} \sum_{j=1}^i a_{ij} \Gamma^{(j)}
	\end{split}
\end{equation}
\begin{theorem}\label{thm:5_1}
	If the IMEX method is of type A (of type CK/ARS) and satisfies the GSA property (and the initial velocity field is divergence free), then in the fluid dynamic limit $\varepsilon \rightarrow 0$ the IMEX RK scheme \eqref{InternalStages} becomes a consistent discretization for the incompressible Navier Stokes equations.
\end{theorem}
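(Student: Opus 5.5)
Our plan is to pass to the limit $\varepsilon\to0$ stage by stage in \eqref{InternalStages}, as in the first order remark of Section~\ref{ap}. We first treat the relaxation stages for $\Theta^{(i)},\Gamma^{(i)}$. Multiplying the $\Theta$ stage by $\varepsilon^2\tau/\Delta t$ gives
\begin{align*}
\frac{\varepsilon^2\tau}{\Delta t}(\Theta^{(i)}-\Theta^n)=-\sum_{j=1}^{i-1}\tilde a_{ij}\big(\tau\D(\nabla_x u^{(j)})-\A(u^{(j)})\big)-\sum_{j=1}^i a_{ij}\Theta^{(j)} ,
\end{align*}
and letting $\varepsilon\to0$ yields $\sum_{j\le i}a_{ij}\Theta^{(j)}=\sum_{j<i}\tilde a_{ij}(\A(u^{(j)})-\tau\D(\nabla_x u^{(j)}))$. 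The same identity holds for $\Gamma$, with $(d+2)(u^{(j)}T^{(j)}-\tau\nabla_x T^{(j)})$ on the right.

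For type A, $A$ is invertible, so we may substitute these weighted sums directly into the macroscopic stages. The key point is that only $\sum_j a_{ij}\Theta^{(j)}$ and $\sum_j a_{ij}\Gamma^{(j)}$ appear there, and they become explicit RK sums of the viscous and convective fluxes. For type CK/ARS, the first row gives no information on $\Theta^{(1)},\Gamma^{(1)}$. Here we use that $\Theta^{(1)}=\Theta^n$, and we assume well-prepared data, i.e.\ $\Theta^n,\Gamma^n$ already lie on the limit manifold: by GSA they coincide with the last stage of the previous step. Subtracting the first row, the invertibility of $\hat A$ then lets us solve for the remaining stages, and the same substitution goes through.

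Next we handle the stiff macroscopic part. Multiplying the $\rho$ stage by $\varepsilon$ gives $\sum_j a_{ij}\nabla_x\cdot u^{(j)}=O(\varepsilon)$. For type A, inverting $A$ yields $\nabla_x\cdot u^{(i)}=0$ for every stage. For CK/ARS, the first stage is $u^{(1)}=u^n$, and this is where the assumption that the initial velocity is divergence free enters; inductively the GSA property gives $\nabla_x\cdot u^n=0$, after which $\hat A$ invertibility gives the remaining stages. Multiplying the $u$ stage by $\varepsilon$ similarly gives $\nabla_x p^{(i)}=O(\varepsilon)$, so $p^{(i)}$ is constant in space to leading order, which is the Boussinesq relation $\nabla_x(\rho+T)=0$.

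The key step is to recover the pressure. We define $\tilde p^{(j)}=p^{(j)}/\varepsilon$ and treat the combination $\sum_j a_{ij}\nabla_x\tilde p^{(j)}$ as a Lagrange multiplier. The velocity stage becomes
\begin{align*}
u^{(i)}=u^n-\Delta t\sum_{j<i}\tilde a_{ij}\big(\nabla_x\cdot\A(u^{(j)})-\tau\Delta_x u^{(j)}\big)-\Delta t\sum_{j\le i}a_{ij}\nabla_x\tilde p^{(j)} .
\end{align*}
Here we use the divergence identity for $\langle\A\otimes\A\rangle:\nabla_x u$ together with $\nabla_x\cdot u^{(j)}=0$ to reduce $\nabla_x\cdot\D(\nabla_x u)$ to $\Delta_x u$. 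Taking the divergence and using $\nabla_x\cdot u^{(i)}=0$ gives a Poisson equation for the weighted pressure, so each stage is a projection step of an explicit RK method.

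For the temperature, we must avoid the $O(1/\varepsilon)$ term $\frac{2}{d\varepsilon}\nabla_x\cdot u$. We combine $\rho$ and $T$ into $S=dT-2\rho$, whose stage equation has no $1/\varepsilon$ term, and we use $p^{(i)}=\rho^{(i)}+T^{(i)}=p^n+O(\varepsilon)$. This gives $\frac{d+2}{d}(T^{(i)}-T^n)=\frac1d(S^{(i)}-S^n)+O(\varepsilon)$. Substituting the $\Gamma$ limit then yields
\begin{align*}
T^{(i)}=T^n-\Delta t\sum_{j<i}\tilde a_{ij}\big(\nabla_x\cdot(u^{(j)}T^{(j)})-\tau\Delta_x T^{(j)}\big),
\end{align*}
which is the explicit RK stage of the heat equation with advection, with the factor $\frac{d+2}{d}$ cancelling as in the first order remark. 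Finally, GSA makes $U^{n+1}$ equal to the last stage, so the update is the explicit RK scheme $(\tilde A,\tilde w)$ applied to the limit system \eqref{limit0}, with projection in each stage. This is consistent with the order of the explicit tableau.

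We expect the main obstacle to be the CK/ARS case. There the singular first implicit stage must be controlled, and we must verify that well-preparedness ($\nabla_x\cdot u^n=0$ and $\Theta^n,\Gamma^n$ at equilibrium) propagates through the steps via GSA.
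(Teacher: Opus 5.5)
Your proposal is correct and follows essentially the same route as the paper. The paper's proof only defers to the stage-by-stage $\varepsilon\to0$ analysis of \cite[Thm 4.4 and Thm 4.5]{2ndPaper}: relaxation stages slaved to the macroscopic variables, substitution of the weighted sums $\sum_j a_{ij}\Theta^{(j)}$ and $\sum_j a_{ij}\Gamma^{(j)}$, a stagewise divergence constraint via invertibility of $A$ or $\hat A$, and GSA for the update; you carry this out explicitly. Two small remarks: the step $p^{(i)}=p^n+O(\varepsilon)$ does not follow from $\nabla_x p^{(i)}=O(\varepsilon)$ alone and also needs conservation of the spatial mean of $p$ (periodic domain) plus the Boussinesq relation at $t^n$ (from the previous last stage via GSA, or well-prepared initial data), whereas your equilibrium assumption on $\Theta^n,\Gamma^n$ in the CK/ARS case is superfluous, since for $i\ge2$ the limit relation already contains $a_{i1}\Theta^{(1)}=a_{i1}\Theta^n$ and only these weighted sums enter the macroscopic stages.
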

%\begin{proof}
%Analogously to the proof in \cite{2ndPaper}.
%\end{proof}

\begin{proof}
To establish the result, we closely follow the framework developed in \cite[Thm 4.4 and Thm 4.5]{2ndPaper}. 

The moment reduction mechanism ensures that as $\epsilon$ vanishes, higher-order kinetic moments are systematically slaved to the macroscopic variables through the dominant relaxation terms, yielding the correct macroscopic limit equations.
\end{proof}

\begin{remark}
\label{rem:ap_properties}
When interpreting Theorem~\ref{thm:5_1}, it is essential to maintain a strict conceptual distinction between:
\begin{enumerate}
    \item the \emph{structural stability} of the continuous and semi-discrete systems,
    \item the \emph{formal continuous limit} as $\epsilon \to 0$, and
    \item the \emph{time-semidiscrete AP property} ensuring consistency at the discrete level.
\end{enumerate}
Furthermore, while the scheme achieves \emph{high-order} accuracy, assertions regarding uniform numerical stability are strictly bounded by the established CFL and dissipation constraints, avoiding generalized claims of unconditional uniform stability.
\end{remark}

The final algorithm for solving the moment system is given by Algorithm \ref{highalg}.

\begin{algorithm}
	\caption{An Asymptotic Preserving GSA IMEX Runge-Kutta scheme for the relaxation system}\scriptsize
	\label{highalg}
	\textbf{Input:} State variables $(\Gamma^{n}, \Theta^{n}, u^{n}, \rho^{n}, T^{n}, p^{n})$ at time $t^n$, time step $\Delta t$.
	
	\vspace{2pt}
	\textbf{Stage Initialization:}
	\begin{itemize}
		\item[] \textbf{if} tableau is CK/ARS-type ($a_{11} = 0$) \textbf{then} set $(\Gamma^{(1)}, \Theta^{(1)}, u^{(1)}, \rho^{(1)}, T^{(1)}, p^{(1)}) \gets (\Gamma^{n}, \Theta^{n}, u^{n}, \rho^{n}, T^{n}, p^{n})$ and $i_{\min} = 2$.
		\item[] \textbf{else} (Type A, $a_{11} \neq 0$) set $i_{\min} = 1$.
	\end{itemize}

	\vspace{2pt}
	\textbf{for} $i = i_{\min}, \dots, s$ \textbf{do}:
	\begin{enumerate}
		\item Compute $\Gamma^{(i)}$ by solving
		\begin{equation}
			\begin{split} \label{FirstStep}
				\Gamma^{(i)} &= \frac{\varepsilon^2 \tau}{\varepsilon^2 \tau + a_{ii} \Delta t}\Gamma^n - \frac{\Delta t (d+2) }{\varepsilon^2 \tau + a_{ii} \Delta t} \sum_{j=1}^{i-1} \tilde{a}_{ij}\left( \tau \nabla_x  T^{(j)}  - u^{(j)} T^{(j)}  \right) \\&- \frac{\Delta t}{\varepsilon^2 \tau + a_{ii} \Delta t} \sum_{j=1}^{i-1} a_{ij} \Gamma^{(j)}.
			\end{split}
		\end{equation} 
		\item Compute $\Theta^{(i)}$ by solving
		\begin{equation}
			\begin{split} \label{SecondStep}
				\Theta^{(i)} &= \frac{\varepsilon^2 \tau}{\varepsilon^2 \tau + a_{ii} \Delta t}\Theta^n - \frac{\Delta t }{\varepsilon^2 \tau + a_{ii} \Delta t} \sum_{j=1}^{i-1} \tilde{a}_{ij}\left(  \tau \D(\nabla_x u^{(j)} )  - \mathcal{A}(u^{(j)}) \right) \\&- \frac{\Delta t}{\varepsilon^2 \tau + a_{ii} \Delta t} \sum_{j=1}^{i-1} a_{ij} \Theta^{(j)}.
			\end{split}
		\end{equation} 
		
		\item Compute ${\tilde p^{(i)} = p^{(i)}/\varepsilon}$ by solving the following Helmholtz equation
		\begin{equation}\begin{split}\label{ThirdStep}
			&	\frac{2+d}{d}   \Delta_x {\tilde p^{(i)}} - \frac{ \varepsilon^2}{\Delta t^2 a_{ii}^2} {\tilde p^{(i)}} = -\frac{ \varepsilon}{\Delta t^2 a_{ii}^2}{p^{(i)}}^*
			\\&	+ \frac{1}{\Delta t a_{ii}^2}  \sum_{j = 1}^{i-1} \left( \frac{2+d}{d } a_{ij} \nabla_x \cdot u^{(j)}\right)  +  \frac{ 1}{\Delta t a_{ii}} \frac{2+d}{d} \nabla_x \cdot {u^{(i)}}^* - \frac{1}{a_{ii}} \frac{2+d}{d} \sum_{j=1}^{i-1} a_{ij} \Delta_x \tilde p^{(j)}
			\end{split}
		\end{equation}
		with
		\begin{equation}
			\begin{split}
				{p^{(i)}}^* &= p^n - \frac{\Delta t}{d} \sum_{j = 1}^i  a_{ij}  \nabla_x \cdot \Gamma^{(j)} , \\
				{u^{(i)}}^* &= u^n - \Delta t  \sum_{j = 1}^i  a_{ij} \nabla_x \cdot  \Theta^{(j)}  .
			\end{split}
		\end{equation}
		\item Compute $u^{(i)}$ by solving
		\begin{equation}
			\begin{split}\label{FourthStep}
				{u^{(i)}} = u^n - \Delta t  \sum_{j = 1}^i  a_{ij} \nabla_x \cdot  \Theta^{(j)}  - \Delta t \sum_{j=1}^{i} a_{ij} \nabla_x \tilde p^{(j)} 
			\end{split}
		\end{equation}
			\item Compute $\rho^{(i)}$ by solving
		\begin{equation}
		\begin{split}\label{FifthStep}
		{\rho^{(i)}} =   \rho^n  - \frac{\Delta t}{\varepsilon} \sum_{j=1}^{i} a_{ij} \nabla_x \cdot u^{(j)}
		= \rho^n  + \frac{d}{d+2} (\epsilon \tilde p^{(i)}-{p^{(i)}}^*)
		\end{split}
		\end{equation}
		\item Compute $T^{(i)}$ by solving
		\begin{equation}
			\begin{split}\label{SixthStep}
				{T^{(i)}} =   				T^n - \frac{\Delta t}{d}  \sum_{j = 1}^i  a_{ij}  \nabla_x \cdot \Gamma^{(j)} - \frac{2 \Delta t}{d \varepsilon} \sum_{j=1}^{i} a_{ij} \nabla_x \cdot u^{(j)}
%				=T^n +{p^{(i)}}^*- p^n - \frac{2 \Delta t}{d \varepsilon} \sum_{j=1}^{i} a_{ij} \nabla_x \cdot u^{(j)}\\
				=T^n +{p^{(i)}}^*- p^n + \frac{2}{d}  (	{\rho^{(i)}} -  \rho^n) 
			\end{split}
		\end{equation}

	\end{enumerate}
	\textbf{end for}
	
	\vspace{2pt}
\textbf{Output:} Advanced state $(\Gamma^{n+1}, \Theta^{n+1}, u^{n+1}, \rho^{n+1}, T^{n+1}, p^{n+1}) \gets (\Gamma^{(s)}, \Theta^{(s)}, u^{(s)}, \rho^{(s)}, T^{(s)}, p^{(s)})$.
	\label{alg1RelSys}
\end{algorithm}

\section{Numerical Implementation and Spatial Discretization Details}
\label{sec6}
\label{spat}

To complement the description of the AP scheme, we provide specific details concerning the spatial discretization and solver configurations. 

\subsection{Spatial Discretization and Time-Step Restrictions}
The numerical solver utilizes IMEX RK tableaus (specifically ARS-type schemes featuring GSA properties) to treat the source term stiffness efficiently. The time-step size $\Delta t$ is governed by a standard advective CFL condition and is independent of the microscopic scaling parameter $\epsilon$. 

Regarding the spatial operators:
\begin{itemize}
    \item First-order derivatives (outside of the divergence/gradient coupling) are discretized using central finite differences of second or fourth order.
    \item The flux term in step~4 of Algorithm~\ref{highalg}, in particular $\nabla_x \cdot \Theta$, is handled via Lax-Friedrichs (LF) numerical fluxes combined with WENO \cite{Shu} reconstructions (WENO3/WENO5).
    \item The discrete Laplacian $\Delta_x \tilde{p}$ in the Helmholtz problem is solved using standard second/fourth-order central differences on the grid.
\end{itemize}

We note also that the choice of the spatial discretization is not restricted to the above described Finite-Difference approach. We have also implemented a generalized Finite Difference discretization for arbitrary  grids based on a least square determination of the 
derivatives yielding similar results.

\subsection{Collocated Grid Formulation and Oscillation Suppression}
The scheme operates on a collocated grid where all primitive variables ($\rho, u, T, p$) share identical nodes. In naive collocated formulations, such an arrangement is inherently prone to unphysical checkerboard pressure oscillations due to grid decoupling. In the present AP scheme, these spurious modes are suppressed owing to the compact stencils combined with LF numerical fluxes. Specifically, the numerical dissipation inherent in the LF flux formulation introduces local inter-node coupling between adjacent cells, effectively damping high-frequency odd-even decoupling and ensuring stable pressure-velocity coupling without requiring separate grid staggering.

\subsection{Treatment of Diffusive Stiffness}
It should be noted that while the present AP formulation successfully eliminates stiffness related to the relaxation source terms and $\epsilon$, the explicit treatment of diffusion terms in the limit equations (or specific substeps in Eq.~\ref{InternalStages}) maintains a standard parabolic stability constraint of the form $\Delta t \le \mathcal{O}(\Delta x^2)$. 

\subsection{Code Availability}
The implementation scripts and numerical routines associated with this study are openly accessible in the project's GitHub repository: \url{https://github.com/sudarshan825/}.

\section{Numerical results}\label{sec7}
\label{sec:numerics}

In this section we present a numerical investigation of the algorithm described above and a comparison of the solutions of the BGK equation \eqref{BGKoriginal} and of the relaxation system \eqref{relax1}--\eqref{relax5}.

\subsection{Numerical methods for the  BGK equation}

In order to compare the numerical results to the kinetic solution, we use two  numerical schemes for 
the full BGK equation \eqref{BGKoriginal} from \cite{TKR} and \cite{2ndPaper}.
The first scheme developed in \cite{TKR} is based on a  classical discretization approach for the BGK equation using an implicit treatment of the collision terms and  a second order IMEX-WENO method. The scaled advection term still requires a time step of the order $\epsilon$
chosen according to 
\begin{equation}
\label{CFL}
\Delta t  = CFL \frac{ \Delta x \epsilon }{v_{max} }.
\end{equation}
Here, $v_{max} $ is the maximal absolute value of the velocites used for the discretization of the velocity space in the BGK model.
 %For the spatial discretization  a  WENO type second order scheme.  
The second method developed in \cite{2ndPaper}  is  based on an AP-method similar to the one described for the relaxation system above. In contrast to the solution of the relaxation system,  the method computes the  full kinetic evolution.
Compared to a classical BGK solver, the scheme avoids the dependence of  the time step 
on $\epsilon$ and leads in the limit to a classical projection scheme for the Navier-Stokes equations as in the case of the relaxation system discussed here. 
%For the numerical comparisons we use  for the relaxation system with arbitrary value of $\epsilon$
%the same time step as for the BGK equation with $\epsilon=1$. 
%The values of $\rho,u,T$ computed by the solution of the relaxation system are then compared to  the density $\rho_F$, mean velocity $u_F$ and temperature $T_F$ computed from the BGK equation using (\ref{relation}).
%

\subsection{Results in 1D}

We consider the following periodic test case in 1D. Let $x \in [0, 1]$ and consider the following initial conditions, similar as in  \cite{DegondTang2011}:
\begin{align*}
	\rho(0,x) &=
	\left\{ 
	\begin{array}{l}  
		1, \qquad \qquad x \in [0, 0.2] ,
		\\ \nonumber 
		1 + \epsilon ^2 , \qquad x \in (0.2, 0.3], 
		\\ \nonumber 
		1, \qquad \qquad x \in (0.3, 0.7],
		\\ \nonumber
		1 - \epsilon^2, \qquad x \in (0.7,0.8],\\
		1 \;\; \qquad \qquad x \in (0.8, 1],
	\end{array}
	\right.
\end{align*}
Moreover, we choose $
u(0, x) = 0, \;
T(0, x) = 0, \;
\Gamma (0,x) = 0$.
%The aim of this setup is to investigate how accurately the numerical scheme handles sharp transitions in density while maintaining stability in the diffusive limit.
We use the numerical scheme for the BGK model described in  \cite{TKR} and  consider 1000 spatial grid points for the BGK simulation, as well as for the spatial discretization of the relaxation system. % This results in a grid spacing $\Delta x = 1/1000$.
We investigate test cases with different parameters $\epsilon = 1, 0.1, 0.05$ and relaxation times $\tau = 0.1, 0.01$ in Figures~\ref{1d_eps_1_t_0dot15} to \ref{1d_eps_0dot05_t_0dot15}. 
% and the length of radius $h = 3.1 dx$ for neighbor searching. 
%Moreover, periodic boundary conditions are used.
%The number of velocity grid points $N_v$ for the BGK equation has to be chosen according to the value of the parameters $\tau$ and $\epsilon$. For larger values of $\tau$ and $\epsilon$ the number 
%$N_v$ has to be chosen larger to avoid spurious oscillations, see, e.g. \cite{TKR}.
In our computations we have used  a velocity  discretization with  $N_v = 200$.
We consider a finite velocity interval  $[-v_{max}, v_{max}]$, with $v_{max} = 10$. 
%resulting in 
%\begin{align*}
%\rho_F(0, x) &= 1 + \epsilon \rho(0.x), \\\ u_F(0,x) &= \frac{\epsilon u(0,x)}{\rho_{F}(0,x)} = 0\\
%T_F(0,x)  &= 1 + \epsilon \frac{T(0,x)}{1 + \epsilon \rho(0,x)} - \epsilon^2 \frac{|u|^2}{2(1 + \epsilon \rho(0,x))^2} = 1.
%\end{align*} 
The associated initial conditions for the BGK equation are given by the Maxwellian distribution \eqref{Maxwelloriginal}, where the macroscopic parameters $\rho_F, u_F,T_F$ are computed from $\rho,u,T$ using the relation \eqref{relation}.

In Figures~\ref{1d_eps_1_t_0dot15} to \ref{1d_eps_0dot05_t_0dot15}, we show density $\rho$, velocity $u$ and temperature $T$ obtained from kinetic equation and relaxation model at different runtimes and for different parameters $\tau$ and $\epsilon$.
 One observes clearly, looking at Figure \ref{1d_eps_1_t_0dot15}, that for larger values of $\epsilon$ the approximation quality deteriorates due to the pronounced wave pattern present in the hyperbolic relaxation system, whereas the solution of the BGK equation has a much smoother behaviour.
 
 On the contrary, Figures~\ref{1d_eps_0dot1_t_0dot15} and   \ref{1d_eps_0dot05_t_0dot15} show, that for smaller values of $\epsilon$ the approximation quality of the relaxation model  is much better, showing only small deviations from the kinetic solution.

\begin{figure}
	\centering
	\includegraphics[keepaspectratio=true, width=4cm]{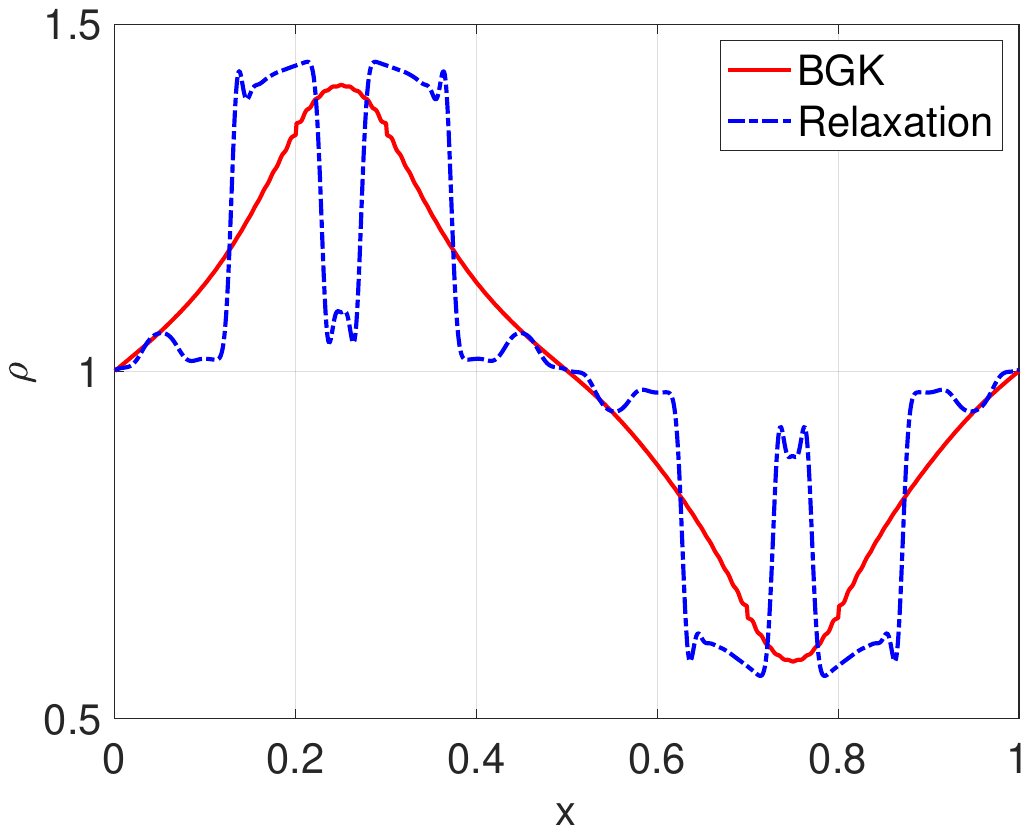}
	\includegraphics[keepaspectratio=true, width=4cm]{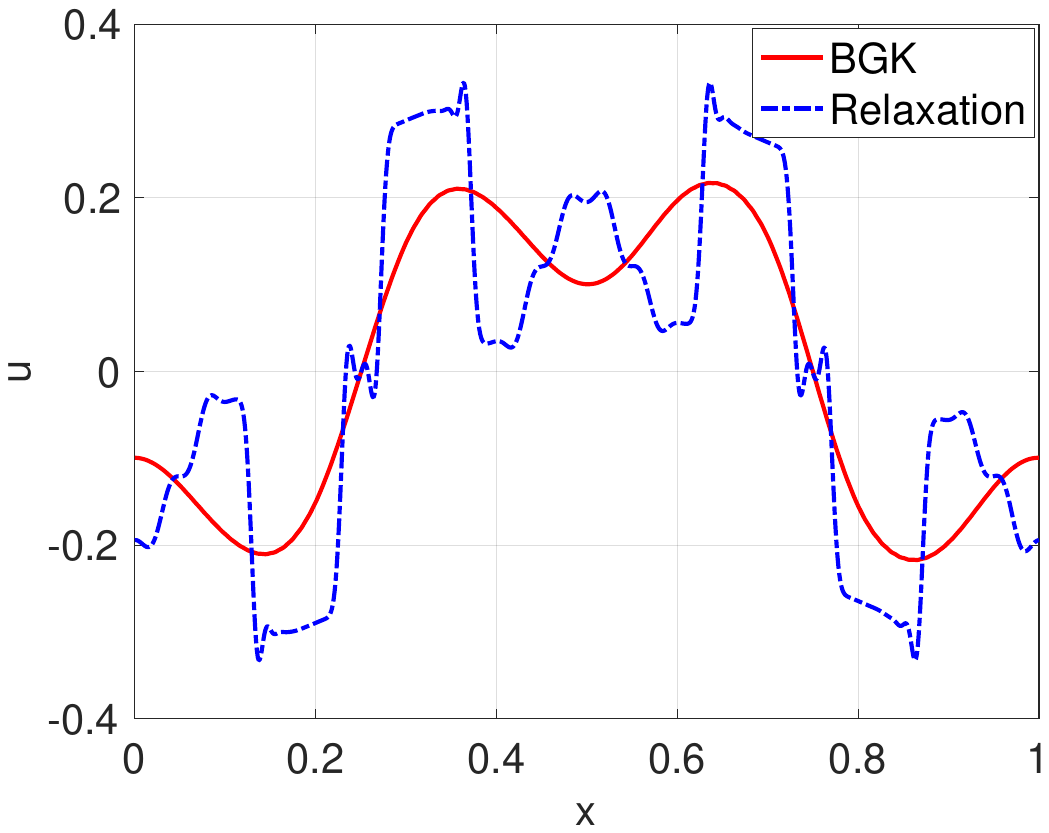}
	\includegraphics[keepaspectratio=true, width=4cm]{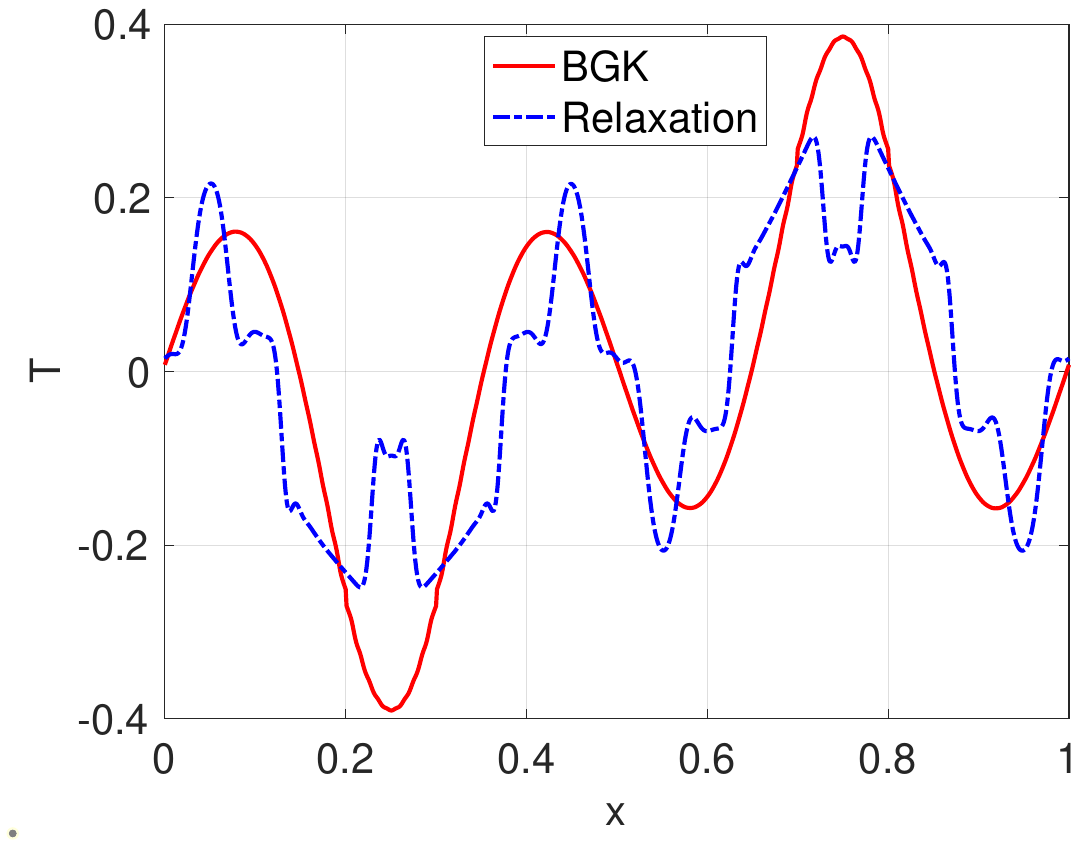}
	\\[-2cm]
	\includegraphics[keepaspectratio=true, width=4cm]{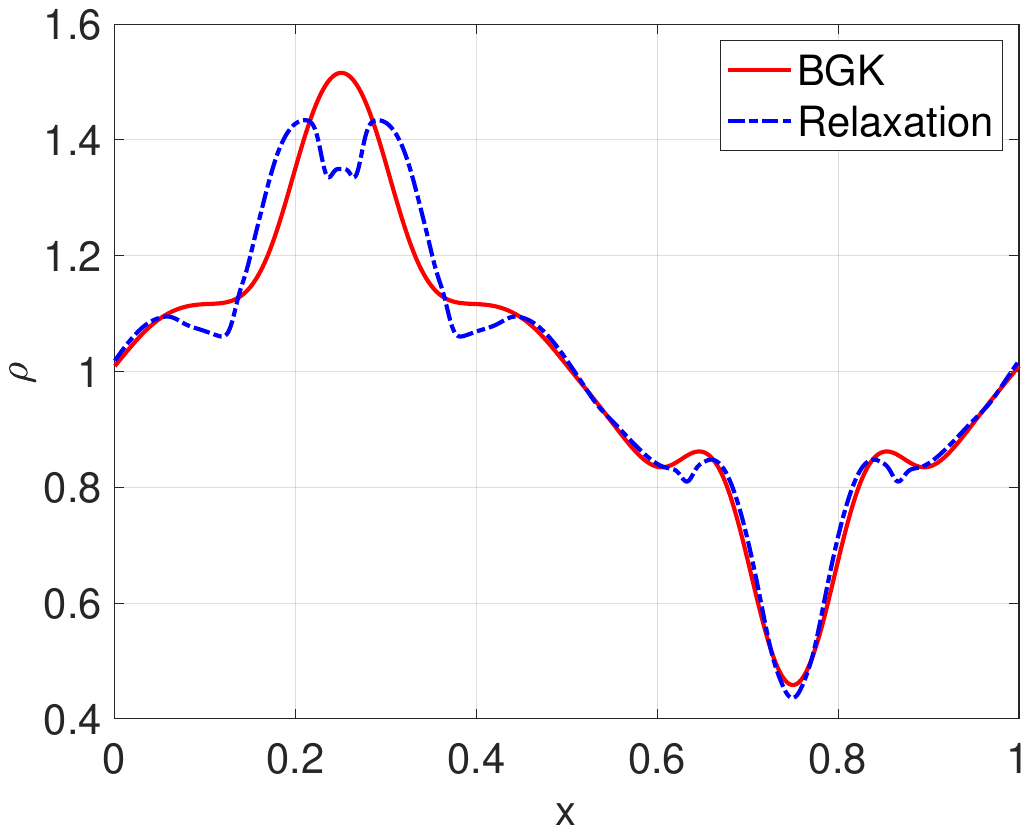}
	\includegraphics[keepaspectratio=true, width=4cm]{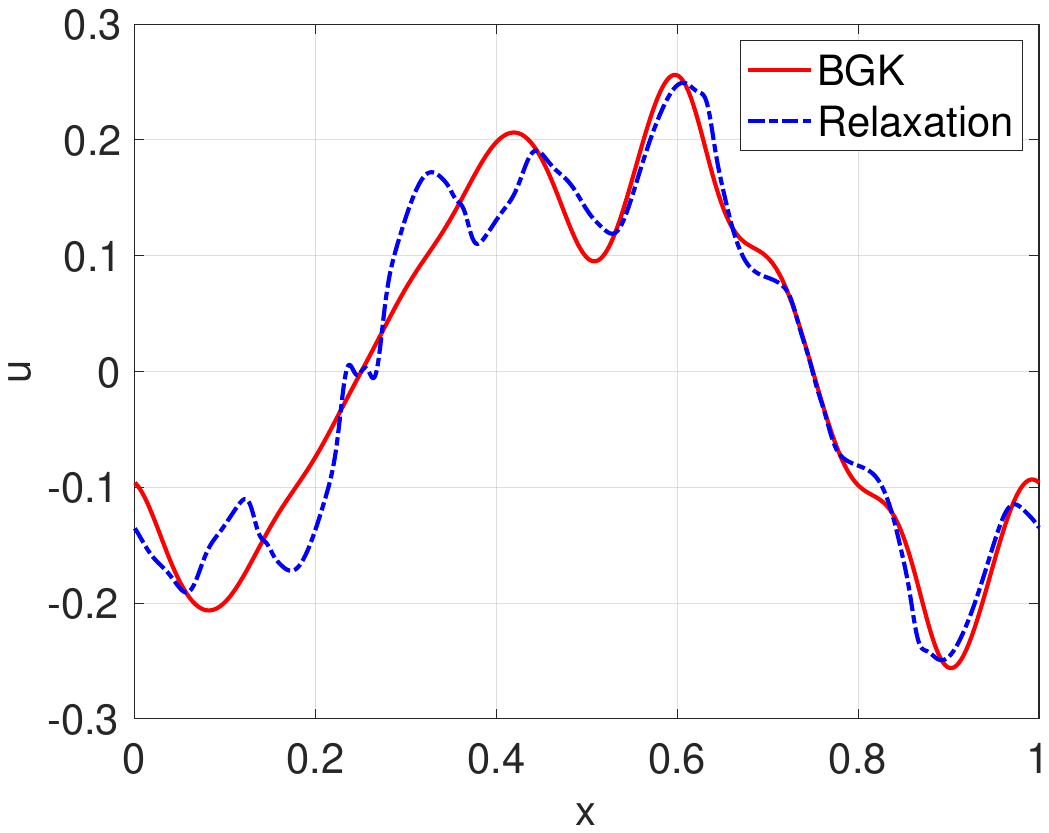}
	\includegraphics[keepaspectratio=true, width=4cm]{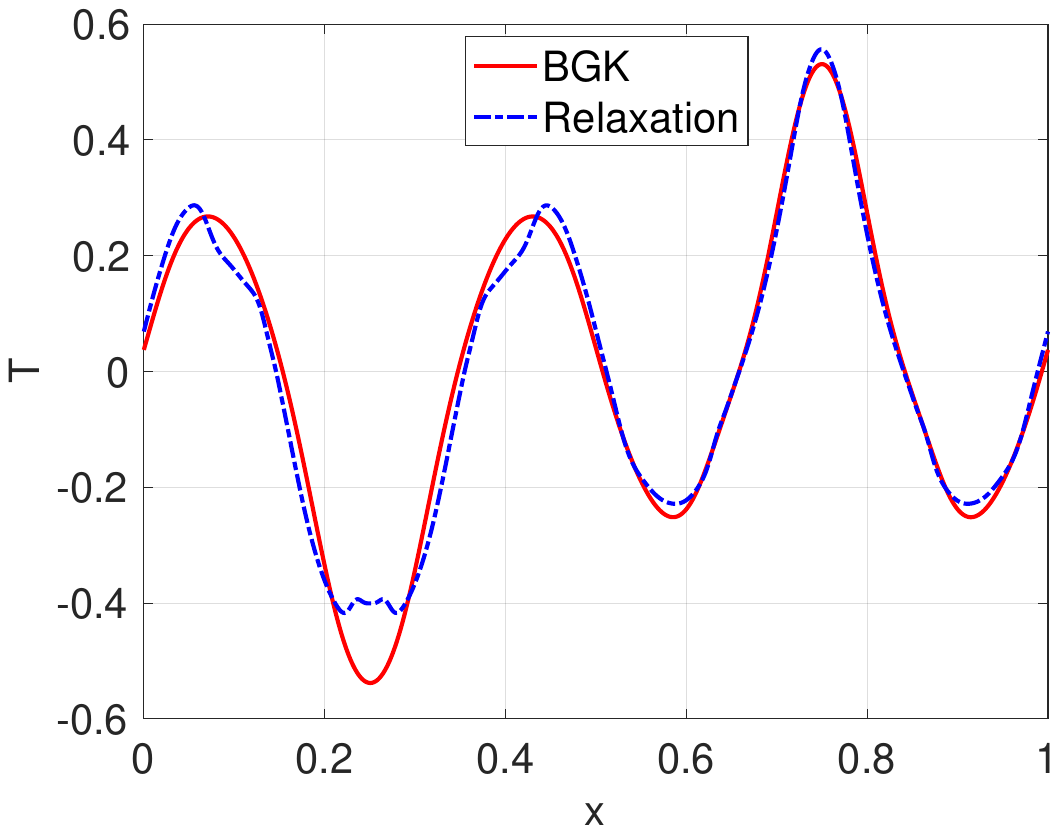}
	\\[-1.5cm]
	\caption{ Comparision of the  density, velocity and temperature obtained from the BGK equation and the relaxation system at final time $t_f = 0.1$ for $\epsilon = 1$, $\tau = 0.1$(first row) and $\tau = 0.01$ (second row). }
	\label{1d_eps_1_t_0dot15}
\end{figure}  
\begin{figure}
	\centering
	\includegraphics[keepaspectratio=true, width=4cm]{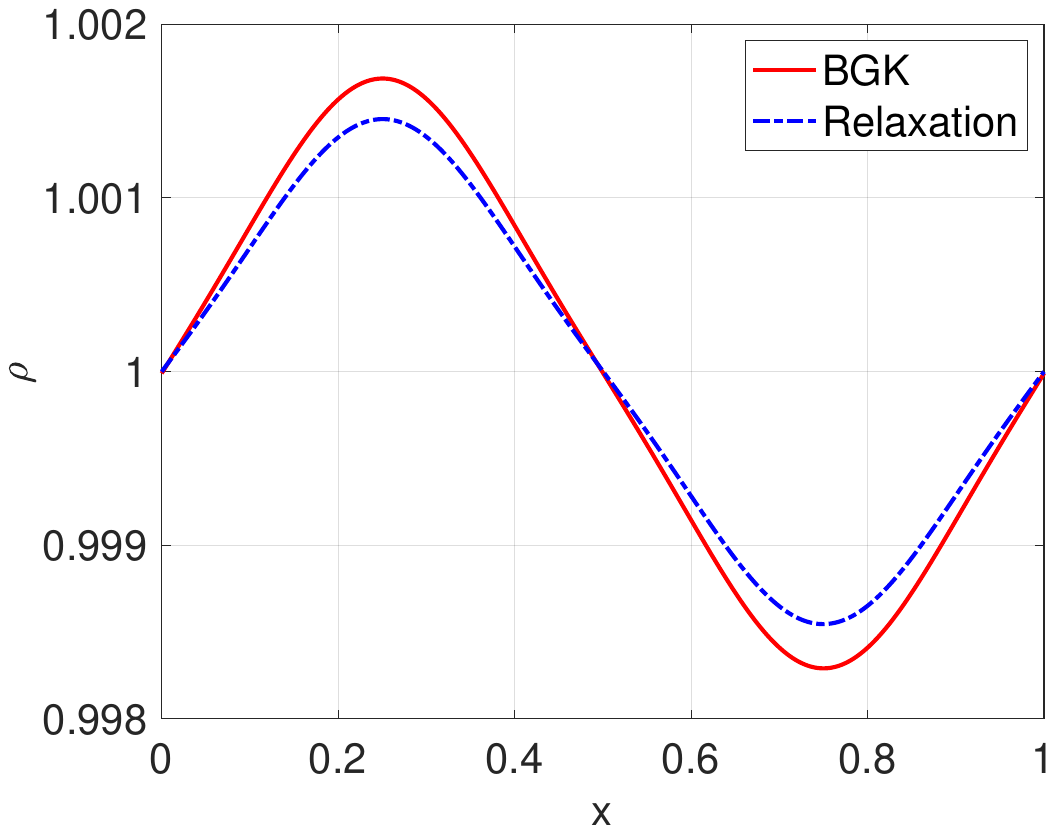}
	\includegraphics[keepaspectratio=true, width=4cm]{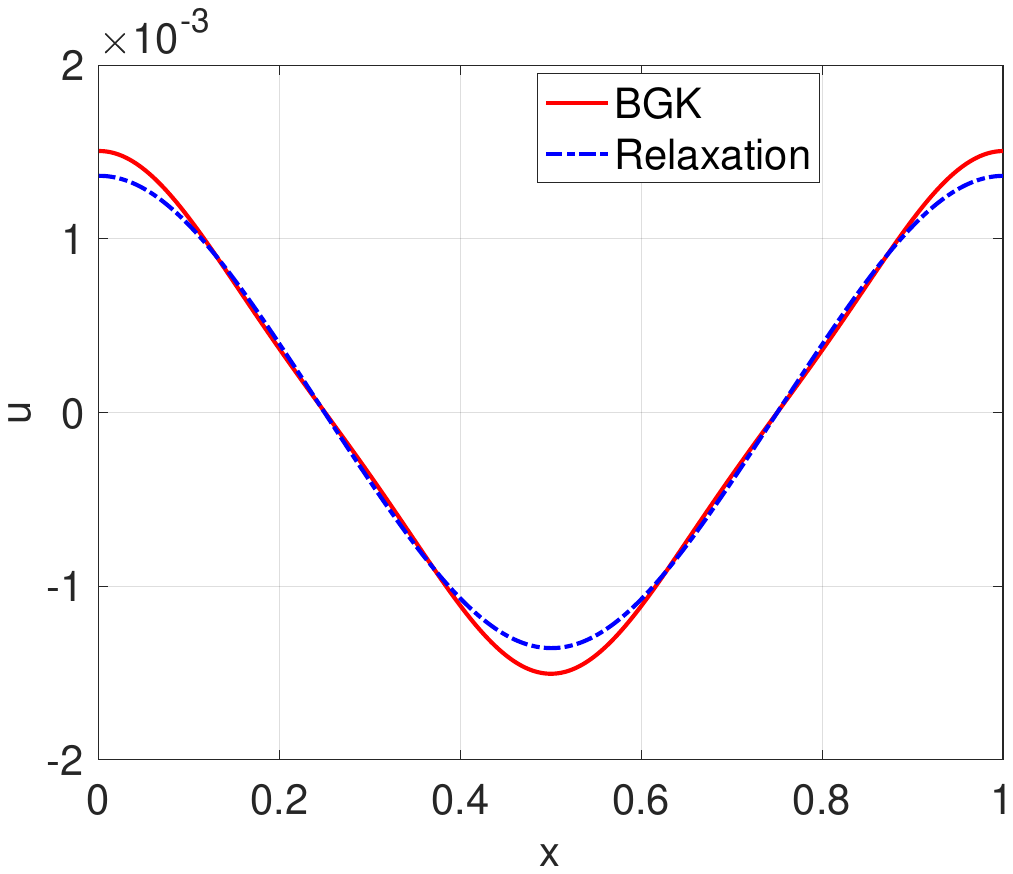}
	\includegraphics[keepaspectratio=true, width=4cm]{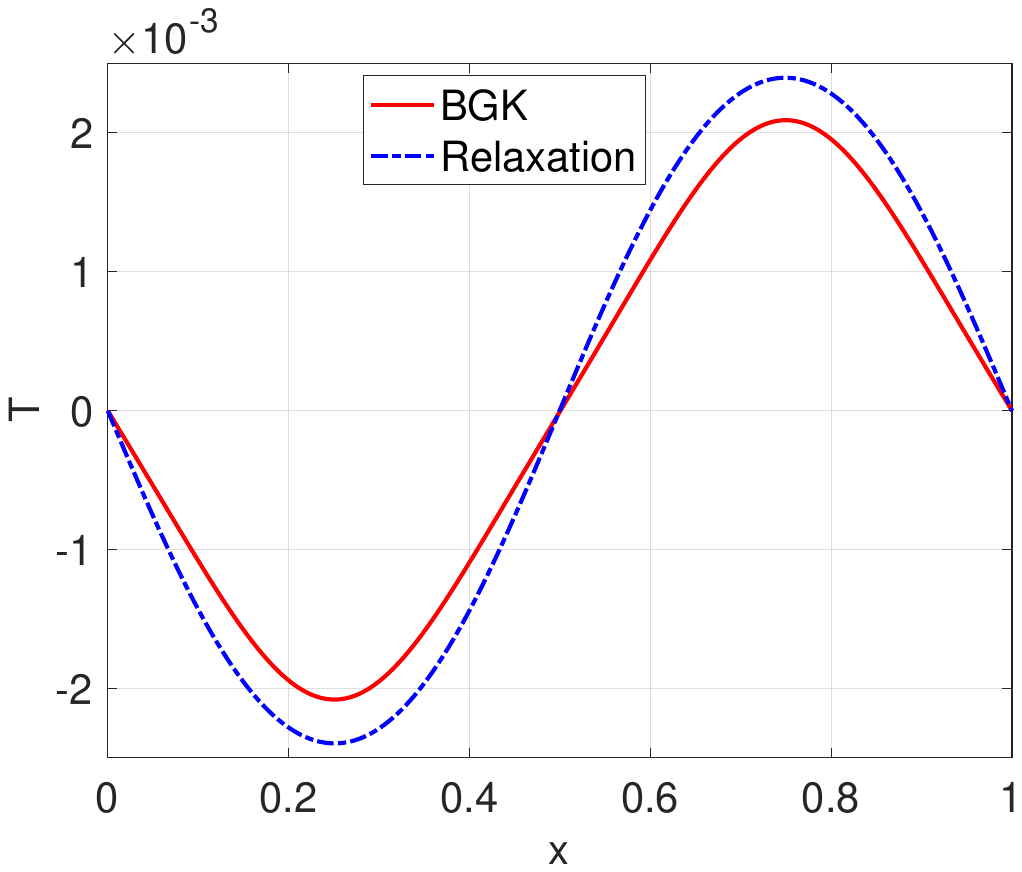}
	\\[-2cm]
	\includegraphics[keepaspectratio=true, width=4cm]{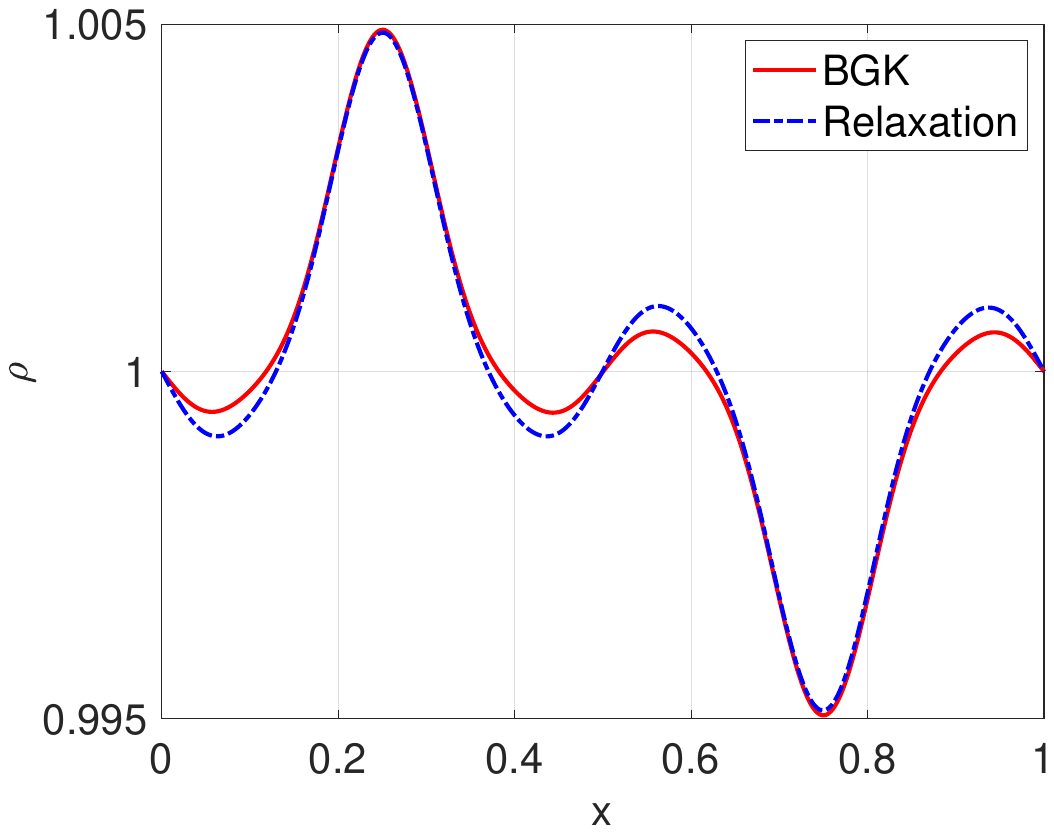}
	\includegraphics[keepaspectratio=true, width=4cm]{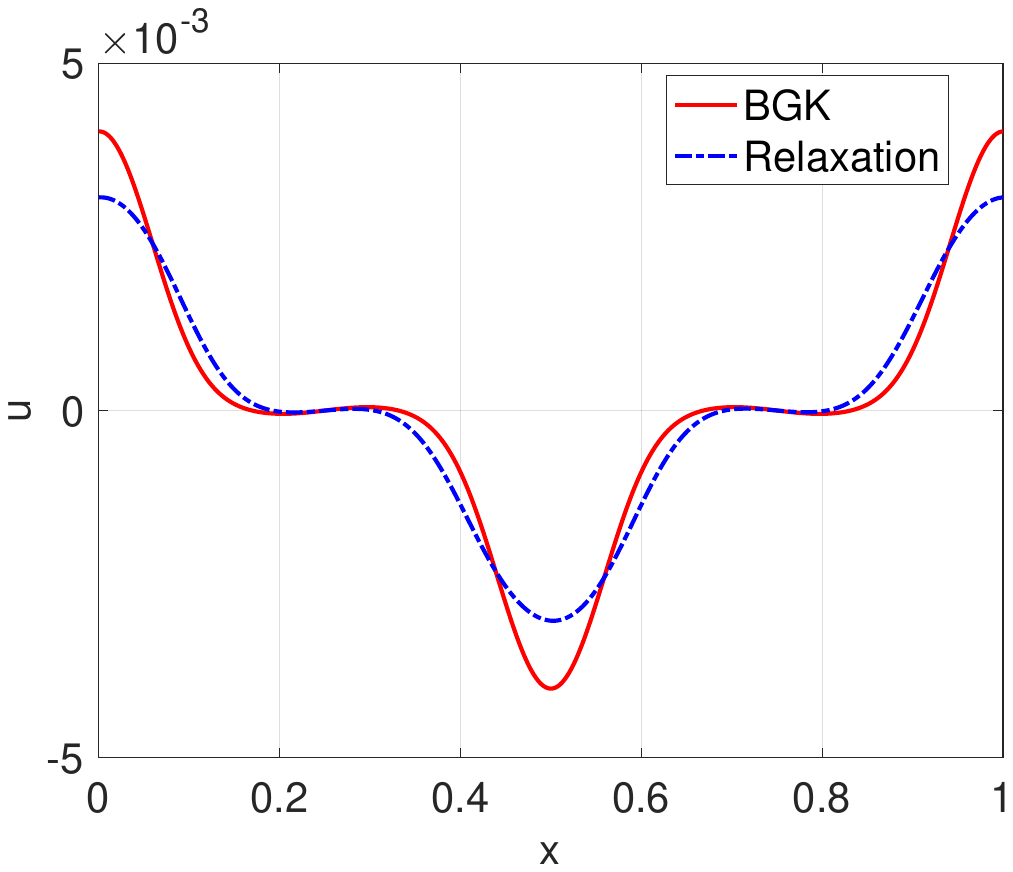}
	\includegraphics[keepaspectratio=true, width=4cm]{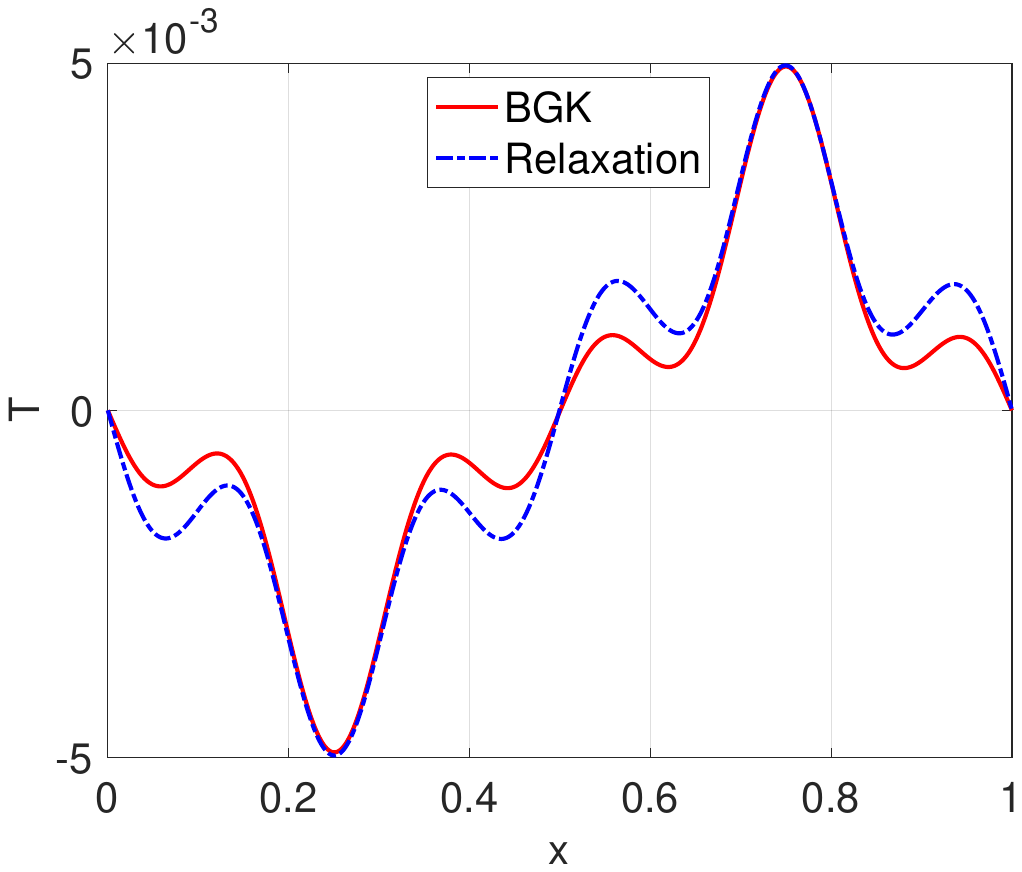}
	\\[-1.5cm]
	\caption{ Comparision of the  density, velocity and temperature obtained from the BGK equation and the relaxation system at final time $t_f = 0.1$ for $\epsilon = 0.1$, $\tau = 0.1$(first row) and $\tau = 0.01$ (second row). }
	\label{1d_eps_0dot1_t_0dot15}
\end{figure}  
\begin{figure}
	\centering
	\includegraphics[keepaspectratio=true, width=4cm]{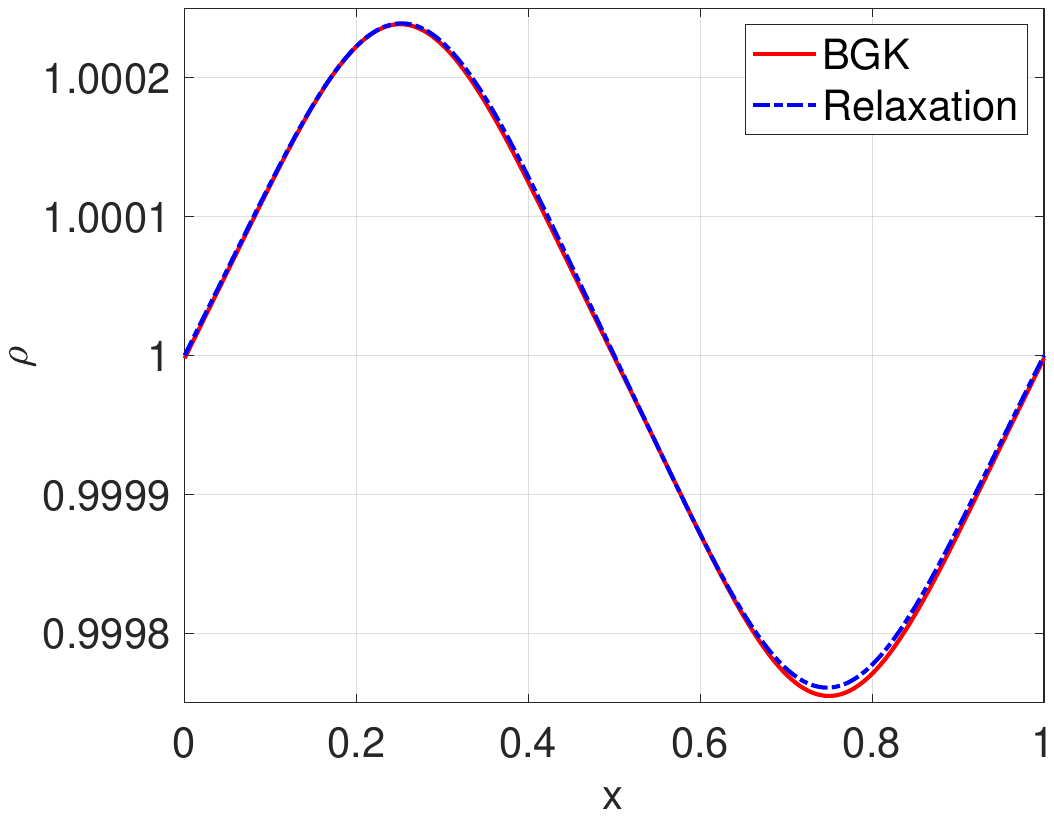}
	\includegraphics[keepaspectratio=true, width=4cm]{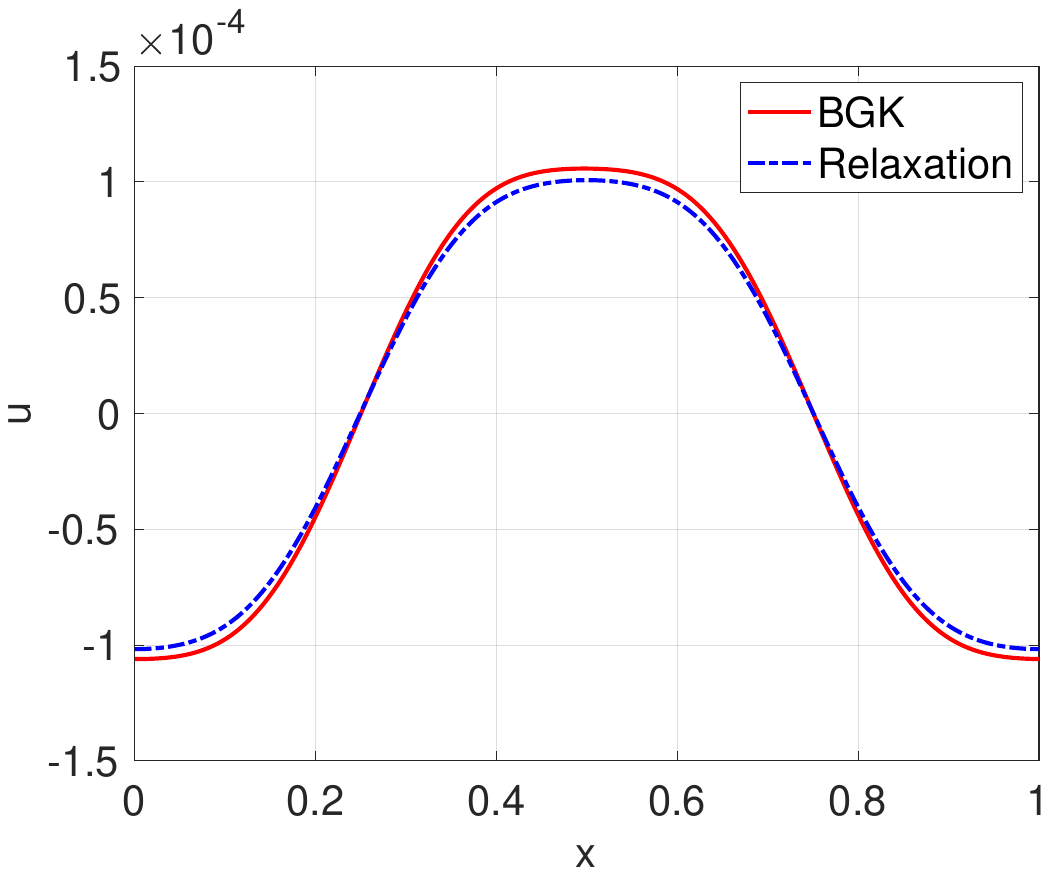}
	\includegraphics[keepaspectratio=true, width=4cm]{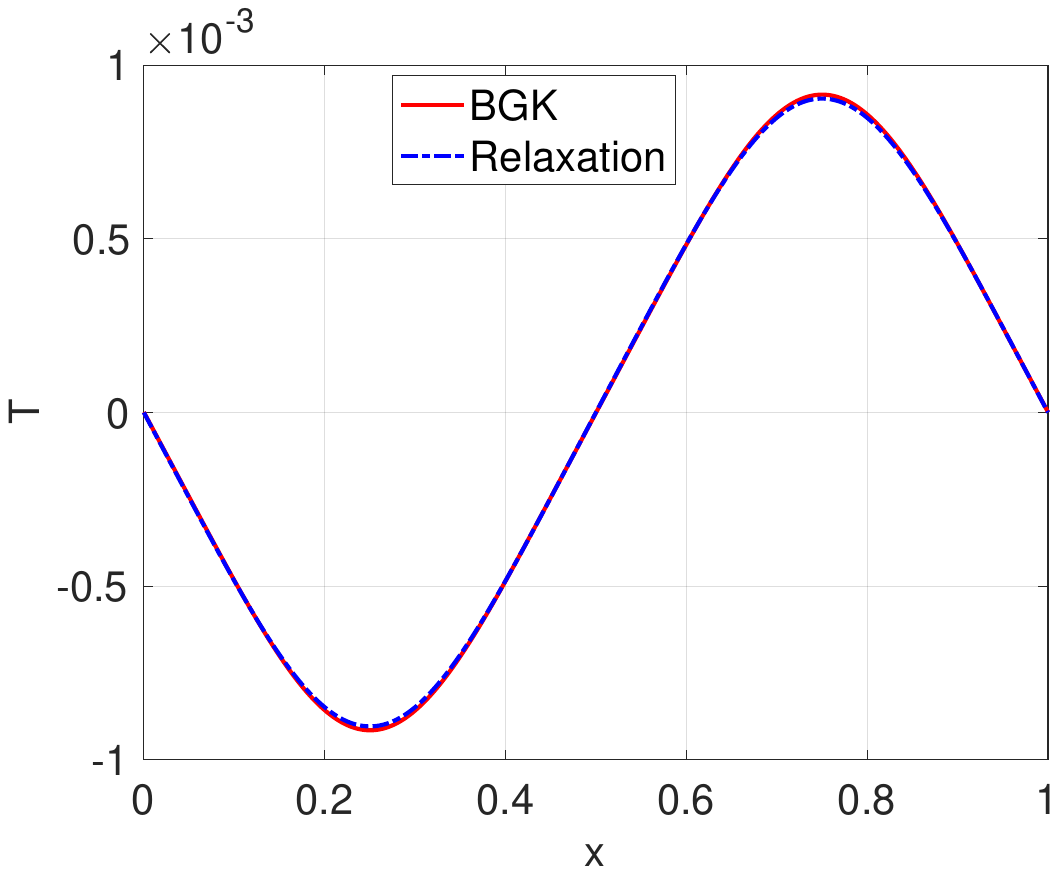}
	\\[-2cm]
	\includegraphics[keepaspectratio=true, width=4cm]{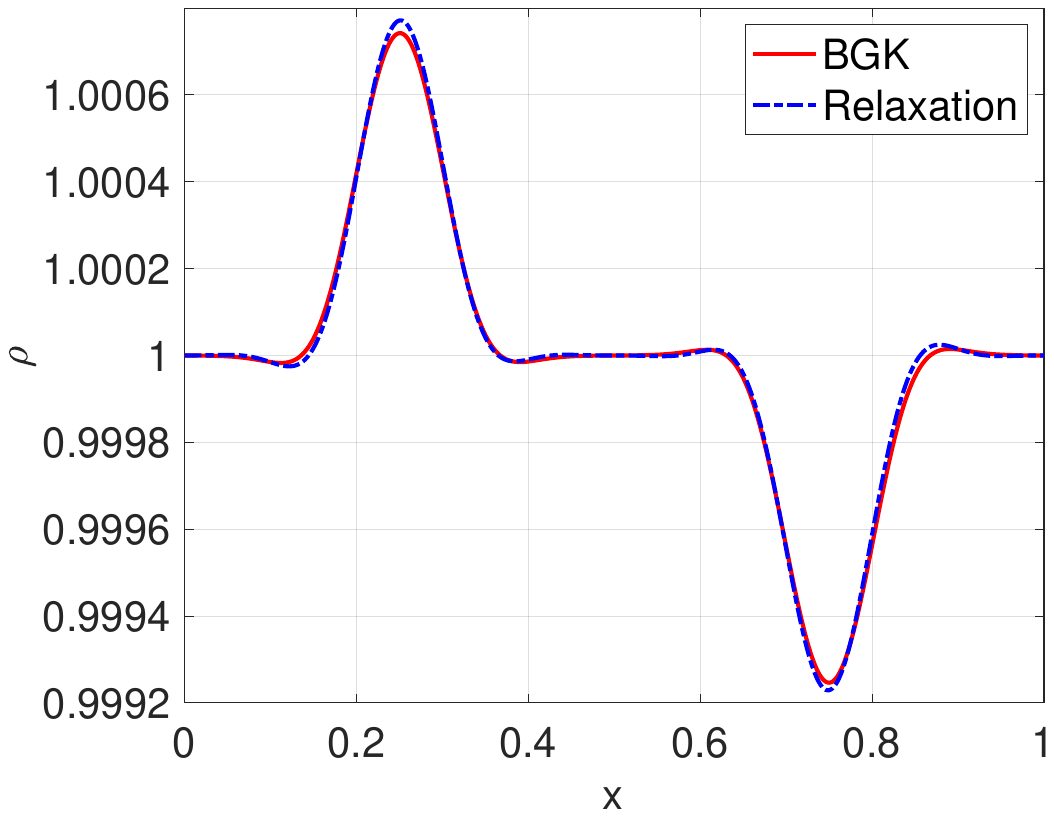}
	\includegraphics[keepaspectratio=true, width=4cm]{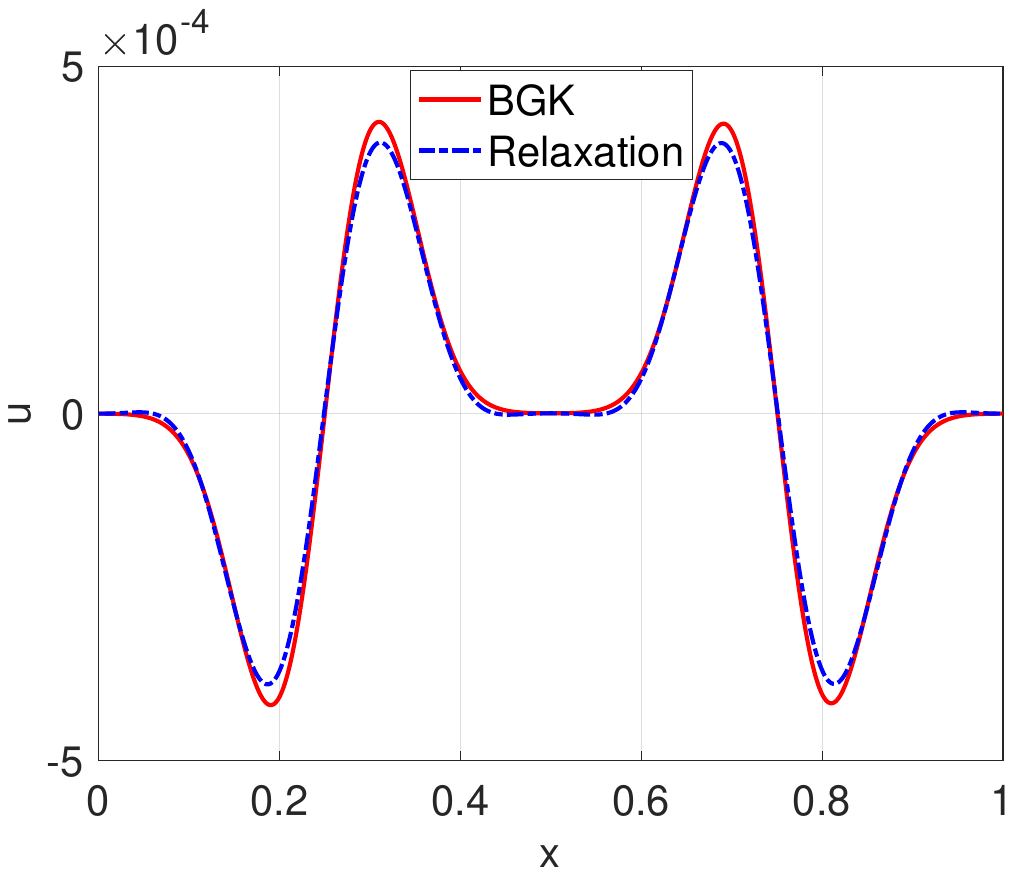}
	\includegraphics[keepaspectratio=true, width=4cm]{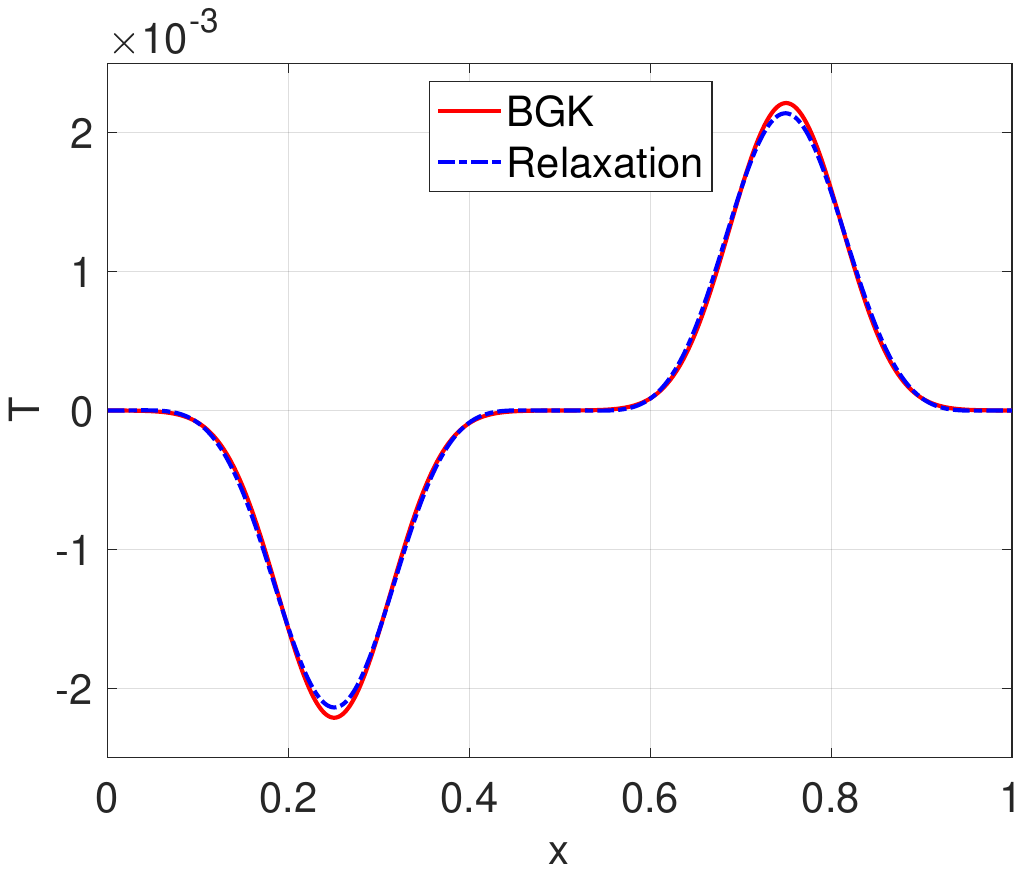}
	\\[-1.5cm]
	\caption{ Comparision of the  density, velocity and temperature obtained from the BGK equation and the relaxation system at final time $t_f = 0.1$ for $\epsilon = 0.05$, $\tau = 0.1$(first row) and $\tau = 0.01$ (second row). }
	\label{1d_eps_0dot05_t_0dot15}
\end{figure}  

\begin{remark}[Comparison of CPU times]
On the one hand, the CPU time for the BGK simulation \cite{TKR} obviously is proportional to the number $N_v$ of velocity discretization points. 
Moreover, at a fixed spatial discretization and fixed final time $t_{final}$, the computational cost for this conventional CFL-limited solver is inversely proportional to $\epsilon$ due to the restrictive CFL condition \eqref{CFL}.
On the other hand, the computation of the relaxation model, using the above-described AP scheme, is independent of $\epsilon$ and obviously does not depend on $N_v$.

For the comparison of the CPU times, we have performed simulations of the BGK equation and the relaxation model for $\tau= 0.01$, using a total number of $1000$ spatial grid points and a final time $t_{final} = 0.01$. 
For the conventional BGK model at $\epsilon=1$, we considered a CFL number of $0.1$, which corresponds to a time step $\Delta t = 10^{-5}$, taking $N_v = 20$ velocity points. The same time discretization has been used for the relaxation system for comparison.
As shown in Table \ref{tab1}, for these parameters the CPU times for the relaxation system and the BGK equation are essentially the same (101 s vs. 110 s).

However, for smaller values of $\epsilon$, the computation times for the BGK equation become increasingly longer. 
According to Table \ref{tab1}, decreasing $\epsilon$ from $1$ to $0.1$ increases the BGK computation time from 110 seconds to 19 minutes (a factor of roughly 10.4). Decreasing $\epsilon$ further to $0.05$ results in a CPU time of 38.8 minutes (a factor of roughly 21.2). 
This behavior perfectly reflects the expected inverse proportionality $O(1/\epsilon)$.
The same linear increase in cost would apply if the velocity discretization were changed from $N_v=20$ to $N_v=200$.

Ultimately, for small values of $\epsilon$ or large values of $N_v$, the proposed AP scheme leads to a considerable speedup for the relaxation model compared to the conventional CFL-limited solver \cite{TKR}. We emphasize that this speedup claim applies specifically to the standard explicit discretization; no timing comparison is made here with implicit AP BGK solvers such as \cite{2ndPaper}. 
The simulations were performed on macOS 26.5 with an Apple M5 Pro processor and 48 GB of memory. 
\end{remark}

%\begin{table}
%	\begin{center}
%		\begin{tabular}{|r|r|r|r}
%			\hline
%			$\epsilon$ & BGK &  Relaxation   \\ \hline
%			1  &   110 s&   101 s                                             \\ 
%			0.1    &  190 min &     101 s      \\
%			0.05     &  388 min &  101 s       \\                                  
%			\hline
%		\end{tabular}
%		\caption{Comparison of computation times BGK equation with relaxation and kinetic relaxation system.}
%		\label{tab1}
%	\end{center}
%\end{table}

\begin{table}[ht]
    \centering
    \begin{tabular}{|c|ccc|cc|c|}
        \hline
        $\epsilon$ & $N_v$ & $\Delta t$ & $t_{final}$ & BGK & Relaxation & Time Data \\
        \hline
        1    & 20 & $10^{-5}$ & 0.01 & 110 s   & 101 s & Measured \\
        0.1  & 20 & $10^{-6}$ & 0.01 & 19 min & 101 s & Measured \\
        0.05 & 20 & $5 \cdot 10^{-7}$ & 0.01 & 38.8 min & 101 s & Measured  \\
        \hline
    \end{tabular}
    \caption{Comparison of computation times of the conventional CFL-limited BGK solver \cite{TKR} and the proposed AP relaxation scheme. Note that the relaxation scheme time-stepping remains constant as it does not resolve the microscopic velocity space or the restrictive $\epsilon$-dependent CFL condition.}
    \label{tab1}
\end{table}

%%%%%%%%%%%%%%%%%%%

%\newpage

\subsection{2D: Doubly periodic double shear layer}

For 2D, we  consider the following periodic two--dimensional model problem. Let $(x,y) \in \left[ 0,1\right]^2$ and let the initial flow to consist of a horizontal shear-layer of finite thickness, perturbed by a small amplitude vertical velocity as in \cite{BM} 

\begin{align*}
	\rho(0, x) &= 1, \\
	T(0, x) &= 0, \\ 
	u_1(0,x) &=
	\left\{ 
	\begin{array}{l}  
		\tanh(k(y-0.25)), \qquad \qquad \mbox{for} \quad \quad y \le 0.5 ,
		\\ 
		\tanh(k(0.75-y)), \qquad \qquad \mbox{for} \quad \quad y > 0.5,
	\end{array}
	\right.
	\\
	u_2 (0,x)& = \delta \sin(2\pi x),
\end{align*}

with $k = 30$ and $\delta = 0.05$. 

Note that Figures~\ref{fig:2d-1}--\ref{fig:2d-7} display the contours of the vorticity $\omega$, which is evaluated from the velocity components $u_1$ and $u_2$ as:$$\omega = \frac{\partial u_2}{\partial x_1} - \frac{\partial u_1}{\partial x_2}.$$
\subsubsection{Accuracy}
A convergence study for the numerical solution of the relaxation model  is shown in Tables~\ref{tab:convergence_u1}--\ref{tab:convergence_p} for the second and third order IMEX-RK methods (see \citep{2ndPaper} for further details on the specific IMEX-RK tableaux used here) together with third and fifth order WENO reconstructions as well as second and fourth order central difference approximations for the spatial discretization. 
Unlike conventional explicit BGK solvers, which are bound to a restrictive, $\epsilon$-dependent CFL condition of the form $\Delta t = \text{CFL} \frac{\Delta x \epsilon}{v_{\max}}$, the proposed AP relaxation method is independent of $\epsilon$ and does not need to resolve the microscopic velocity scale or stiff $\epsilon$-scaled CFL constraints. 
Due to the parabolic time-step restriction, the time step scales quadratically with the grid spacing as $\Delta t(h) = \mathcal{O}(h^2)$ during the spatial and temporal refinement studies (varying grid spacing $h = \Delta x$) to maintain stability and the designed order of accuracy.

In order to evaluate the convergence properties and accuracy of the numerical schemes, the global discretization error is measured by $L^1$, $L^2$ and $L^\infty$ norms. The reference solution is obtained from the numerical approximation computed on a fine mesh of $513 \times 513$ grid points. 
It should be noted that the convergence results presented in Tables~\ref{tab:convergence_u1}--\ref{tab:convergence_p} strictly refer to the asymptotic regime with $\epsilon = 10^{-6}$ and $\tau = 0$. These results do not automatically imply universal, high-order accuracy across all physical regimes. The limiting case $\tau = 0$ is evaluated directly via the limiting algebraic scheme.
The results reported in Tables~\ref{tab:convergence_u1}--\ref{tab:convergence_p} confirm the expected order of convergence.

\subsubsection{Comparison to BGK}
For the numerical examples in this subsection, we use again the BGK algorithm from \cite{TKR} with  a $100\times100$ spatial grid and evaluate the performance for   $\epsilon = 0.5$ and $\epsilon = 0.1$ in order to compare  the relaxation system on the same spatial grid with a reference solution computed by the BGK solver. As already mentioned before, the reference solution computed by the BGK solver is only second order accurate. Therefore, we utilize a comparable second-order version of our algorithm for validation of our results. More specifically, we obtain this second-order version by using second-order central difference approximations combined with a minmod limiter for spatial reconstruction instead of using Lax Friedrichs flux functions.
% and WENO reconstructions like in our higher order implementation. 

%As shown in Figures~\ref{fig:2d-1}--\ref{fig:2d-8}, the relaxation system provides a good approximation, especially for smaller values of $\epsilon$, without the need of further discretization of a velocity space. 

In  Figures~\ref{fig:2d-1}--\ref{fig:2d-3}  we compare the solution of the relaxation system with our reference solution, obtained by the BGK solver \cite{TKR}, using the  parameters $\epsilon = 0.5, 0.1, $ and $\tau = 0.01,0.001$.    
The agreement is in general good for $\epsilon=0.1$ and also for $\epsilon =0.5$ and  $\tau=0.01$ as in 
Figure~\ref{fig:2d-1}.  For $\epsilon=0.5$ and $\tau=0.001$ as in Figure \ref{fig:2d-2}  on the other hand, larger deviations of the relaxation system solution from the BGK solution is observed in good agreement with the analytical observation, see
Remark \ref{hyplimit}.

%%%%%%%%%%%% TABLES %%%%%%%%%%%%%
\begin{table}[htbp]
    \centering
    \small
    \begin{tabular}{@{} l c cc cc cc @{}}
        \toprule
        \multirow{2}{*}{\textbf{Method}} & \multirow{2}{*}{$N$} & \multicolumn{2}{c}{\textbf{$L^1$ Error}} & \multicolumn{2}{c}{\textbf{$L^2$ Error}} & \multicolumn{2}{c}{\textbf{$L^\infty$ Error}} \\
        \cmidrule(lr){3-4} \cmidrule(lr){5-6} \cmidrule(lr){7-8}
        & & Error & EOC & Error & EOC & Error & EOC \\
        \midrule
        % --- METHOD C ---
        \multirow{4}{*}{\textbf{IMEX RK2}} 
& 33 & 5.56e-02 & NaN & 8.71e-02 & NaN & 1.95e-01 & NaN \\
& 65 & 1.70e-02 & 1.71 & 2.91e-02 & 1.58 & 7.53e-02 & 1.38 \\
& 129 & 3.28e-03 & 2.38 & 5.69e-03 & 2.35 & 1.61e-02 & 2.22 \\
& 257 & 5.71e-04 & 2.52 & 1.02e-03 & 2.47 & 2.82e-03 & 2.51 \\
        
        \addlinespace[0.3cm]
        % --- METHOD A ---
        \multirow{4}{*}{\textbf{IMEX RK3}} 
& 33 & 5.13e-02 & NaN & 8.23e-02 & NaN & 1.92e-01 & NaN \\
& 65 & 6.53e-03 & 2.97 & 1.17e-02 & 2.82 & 3.17e-02 & 2.60 \\
& 129 & 6.44e-04 & 3.34 & 1.42e-03 & 3.03 & 6.40e-03 & 2.31 \\
& 257 & 1.37e-05 & 5.56 & 3.52e-05 & 5.34 & 1.69e-04 & 5.24 \\
 \\
%        \addlinespace[0.3cm]
        
%        % --- METHOD C ---
%        \multirow{4}{*}{\textbf{"Standard" INS Solver}} 
%        & 33  & 2.278e-1 & --   & 9.80e-3 & --   & 1.40e-2 & --   \\
%       & 65  & 6.84e-2 & 1.7353 & 6.12e-4 & 1.6829 & 8.75e-4 & 1.6181 \\
%       & 129  & 3.71e-5 & 1.9317 & 3.83e-5 & 1.9053 & 5.47e-5 & 1.8576 \\
%        & 257  & 2.32e-6 & 1.9879 & 2.39e-6 & 1.9838 & 3.42e-6 & 1.9770 \\
        
        \bottomrule
    \end{tabular}
    \caption{Convergence analysis for the double shear layer test case with $\varepsilon = 10^{-6}$ and $\tau = 0$ at $t = 1$ showing variable $u_1$ and using two different numerical methods, namely IMEX RK2 with WENO3 and IMEX RK3 with WENO5. EOC denotes the Experimental Order of Convergence. Global discretization errors ($L^1$-, $L^2$-, and $L^\infty$-norms) are evaluated by comparing coarser grid solutions against a high-resolution numerical reference solution computed on a fine grid of $513 \times 513$ points.}
    \label{tab:convergence_u1}
\end{table}

\begin{table}[htbp]
    \centering
    \small
    \begin{tabular}{@{} l c cc cc cc @{}}
        \toprule
        \multirow{2}{*}{\textbf{Method}} & \multirow{2}{*}{$N$} & \multicolumn{2}{c}{\textbf{$L^1$ Error}} & \multicolumn{2}{c}{\textbf{$L^2$ Error}} & \multicolumn{2}{c}{\textbf{$L^\infty$ Error}} \\
        \cmidrule(lr){3-4} \cmidrule(lr){5-6} \cmidrule(lr){7-8}
        & & Error & EOC & Error & EOC & Error & EOC \\
        \midrule
        % --- METHOD C ---
        \multirow{4}{*}{\textbf{IMEX RK2}} 
& 33 & 3.19e-03 & NaN & 5.10e-03 & NaN & 1.89e-02 & NaN \\
& 65 & 1.85e-03 & 0.78 & 3.24e-03 & 0.65 & 1.11e-02 & 0.78 \\
& 129 & 3.95e-04 & 2.23 & 7.05e-04 & 2.20 & 3.31e-03 & 1.74 \\
& 257 & 4.93e-05 & 3.00 & 1.00e-04 & 2.81 & 5.53e-04 & 2.58 \\
        
        \addlinespace[0.3cm]
        % --- METHOD A ---
        \multirow{4}{*}{\textbf{IMEX RK3}} 
& 33 & 2.21e-03 & NaN & 3.69e-03 & NaN & 1.32e-02 & NaN \\
& 65 & 5.61e-04 & 1.98 & 9.36e-04 & 1.98 & 3.50e-03 & 1.91 \\
& 129 & 2.36e-05 & 4.57 & 4.78e-05 & 4.29 & 2.66e-04 & 3.72 \\
& 257 & 5.49e-07 & 5.42 & 1.19e-06 & 5.32 & 6.86e-06 & 5.28 \\
 \\
%        \addlinespace[0.3cm]
        
%        % --- METHOD C ---
%        \multirow{4}{*}{\textbf{"Standard" INS Solver}} 
%        & 33  & 2.278e-1 & --   & 9.80e-3 & --   & 1.40e-2 & --   \\
%       & 65  & 6.84e-2 & 1.7353 & 6.12e-4 & 1.6829 & 8.75e-4 & 1.6181 \\
%       & 129  & 3.71e-5 & 1.9317 & 3.83e-5 & 1.9053 & 5.47e-5 & 1.8576 \\
%        & 257  & 2.32e-6 & 1.9879 & 2.39e-6 & 1.9838 & 3.42e-6 & 1.9770 \\
        
        \bottomrule
    \end{tabular}
    \caption{Convergence analysis for the double shear layer test case with $\varepsilon = 10^{-6}$ and $\tau = 0$ at $t = 1$ showing variable $u_2$ and using two different numerical methods, namely IMEX RK2 with WENO3 and IMEX RK3 with WENO5. EOC denotes the Experimental Order of Convergence. Global discretization errors ($L^1$-, $L^2$-, and $L^\infty$-norms) are evaluated by comparing coarser grid solutions against a high-resolution numerical reference solution computed on a fine grid of $513 \times 513$ points.}
        \label{tab:convergence_u2}
\end{table}

\begin{table}[htbp]
    \centering
    \small
    \begin{tabular}{@{} l c cc cc cc @{}}
        \toprule
        \multirow{2}{*}{\textbf{Method}} & \multirow{2}{*}{$N$} & \multicolumn{2}{c}{\textbf{$L^1$ Error}} & \multicolumn{2}{c}{\textbf{$L^2$ Error}} & \multicolumn{2}{c}{\textbf{$L^\infty$ Error}} \\
        \cmidrule(lr){3-4} \cmidrule(lr){5-6} \cmidrule(lr){7-8}
        & & Error & EOC & Error & EOC & Error & EOC \\
        \midrule
        % --- METHOD C ---
        \multirow{4}{*}{\textbf{IMEX RK2}} 
& 33 & 4.93e-02 & NaN & 5.43e-02 & NaN & 1.48e-01 & NaN \\
& 65 & 3.92e-02 & 0.33 & 6.09e-02 & -0.16 & 2.26e-01 & -0.61 \\
& 129 & 9.50e-03 & 2.04 & 1.89e-02 & 1.68 & 9.45e-02 & 1.26 \\
& 257 & 1.62e-03 & 2.55 & 3.07e-03 & 2.63 & 1.66e-02 & 2.50 \\
        
        \addlinespace[0.3cm]
        % --- METHOD A ---
        \multirow{4}{*}{\textbf{IMEX RK3}} 
& 33 & 4.39e-02 & NaN & 5.05e-02 & NaN & 1.26e-01 & NaN \\
& 65 & 1.01e-02 & 2.12 & 1.50e-02 & 1.75 & 6.76e-02 & 0.90 \\
& 129 & 5.43e-04 & 4.22 & 8.63e-04 & 4.12 & 3.46e-03 & 4.29 \\
& 257 & 8.13e-05 & 2.74 & 8.29e-05 & 3.38 & 1.92e-04 & 4.17 \\
 \\
%        \addlinespace[0.3cm]
        
%        % --- METHOD C ---
%        \multirow{4}{*}{\textbf{"Standard" INS Solver}} 
%        & 33  & 2.278e-1 & --   & 9.80e-3 & --   & 1.40e-2 & --   \\
%       & 65  & 6.84e-2 & 1.7353 & 6.12e-4 & 1.6829 & 8.75e-4 & 1.6181 \\
%       & 129  & 3.71e-5 & 1.9317 & 3.83e-5 & 1.9053 & 5.47e-5 & 1.8576 \\
%        & 257  & 2.32e-6 & 1.9879 & 2.39e-6 & 1.9838 & 3.42e-6 & 1.9770 \\
        
        \bottomrule
    \end{tabular}
    \caption{Convergence analysis for the double shear layer test case with $\varepsilon = 10^{-6}$ and $\tau = 0$ at $t = 1$ showing variable $p$ and using two different numerical methods, namely IMEX RK2 with WENO3 and IMEX RK3 with WENO5. EOC denotes the Experimental Order of Convergence. Global discretization errors ($L^1$-, $L^2$-, and $L^\infty$-norms) are evaluated by comparing coarser grid solutions against a high-resolution numerical reference solution computed on a fine grid of $513 \times 513$ points.}
        \label{tab:convergence_p}
\end{table}
\begin{figure}[H]
	\centering
	\begin{subfigure}{0.45\textwidth}
		\centering
		\includegraphics[width=\linewidth]{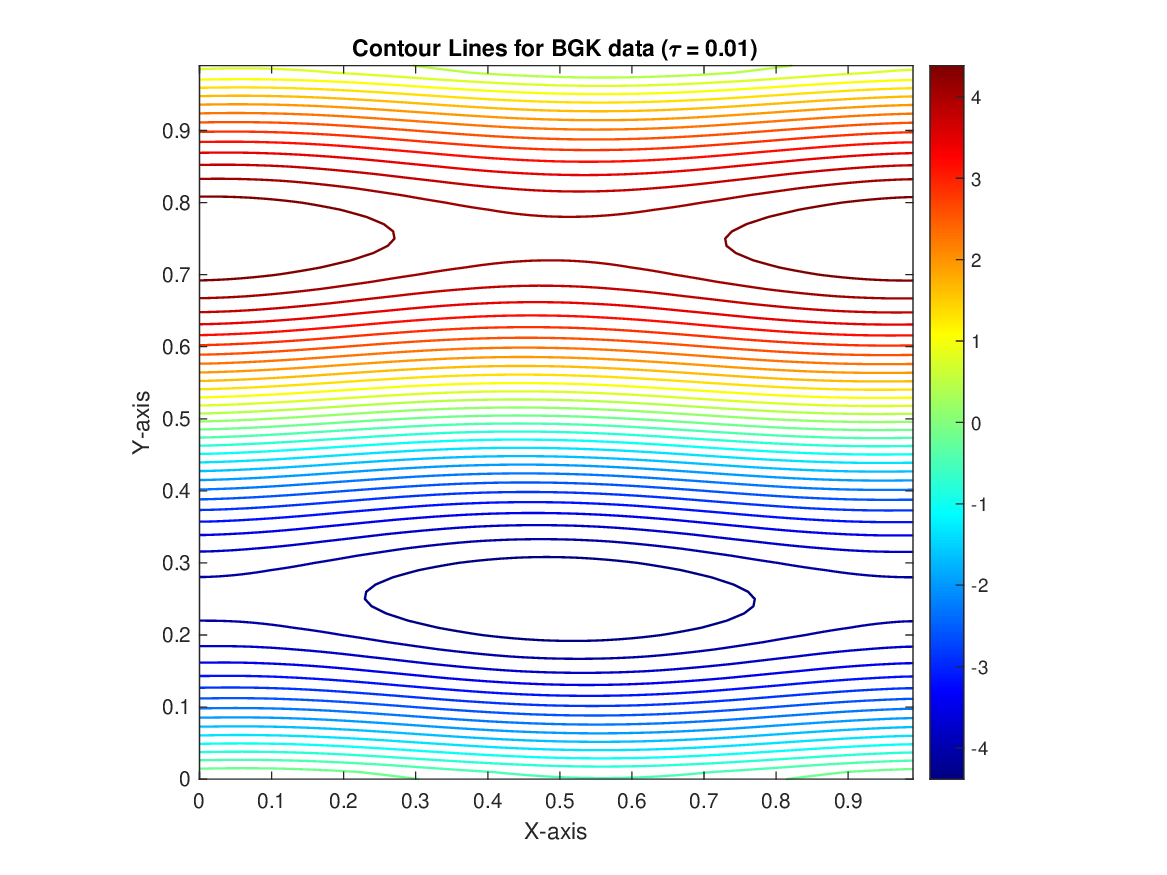}
%		\caption{$\varepsilon = 0.5$,v-Raum, Diff exp. - Algo 1}
	\end{subfigure}
	\centering
	\begin{subfigure}{0.45\textwidth}
		\centering
	\includegraphics[width=\linewidth]{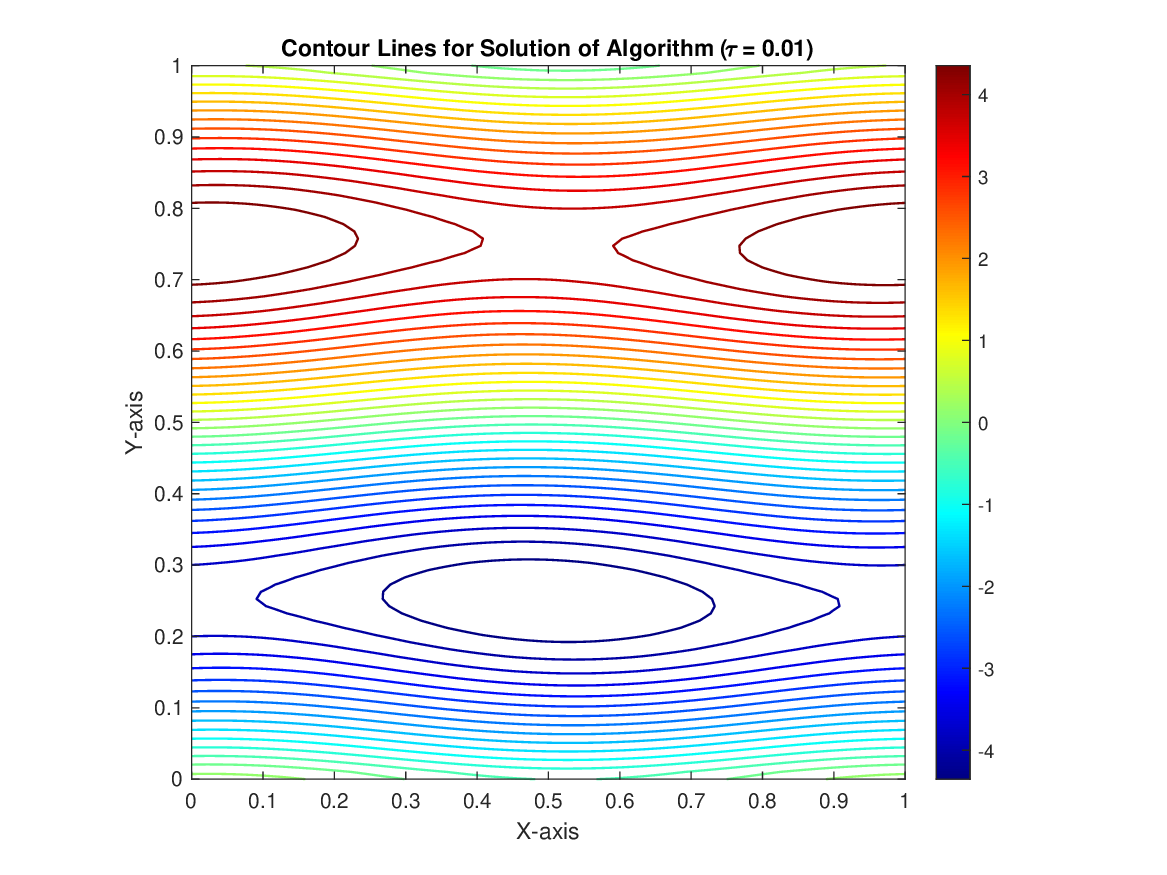}
	\end{subfigure}
	\caption{Contour lines of the vorticity $\omega$, computed from the velocity components $u_1$ and $u_2$, for the shear layer test case for $\epsilon = 0.5$ and $\tau = 0.01$ at $t = 1.5$. Left: BGK \cite{TKR}, right: relaxation system with IMEX RK2, minmod}
	\label{fig:2d-1}
\end{figure}

%\begin{figure}[H]
%	\centering
%	\begin{subfigure}{0.45\textwidth}
%		\centering
%		\includegraphics[width=\linewidth]{figures/shear_layer/eps_0dot5/tau_0dot01/shear_layer_RK2WENO3eps_0dot5_tau_0dot01_t_1dot5.eps}
%%		\caption{$\varepsilon = 0.5$,v-Raum, Diff exp. - Algo 1}
%	\end{subfigure}
%%	\centering
%%	\begin{subfigure}{0.45\textwidth}
%%		\centering
%%%		\includegraphics[width=\linewidth]{Images/AlgoSoleps1tau00001D2}
%%	\includegraphics[width=\linewidth]{figures/shear_layer/eps_0dot1/tau_1em3/shear_layer_VielDiff_eps_0dot1_tau_0dot001_t_1dot5.eps}
%%
%%	\end{subfigure}
%	\caption{Solution of the shear layer test case for $\epsilon = 0.5$ and $\tau = 0.01$ at $t = 1.5$. Left: IMEX RK2, WENO3, right: IMEX RK3, WENO5}
%	\label{fig:2d-2}
%\end{figure}

%%%%%%%%%%%%%%%%%%%%%%%%
\begin{figure}[H]
	\centering
	\begin{subfigure}{0.45\textwidth}
		\centering
		\includegraphics[width=\linewidth]{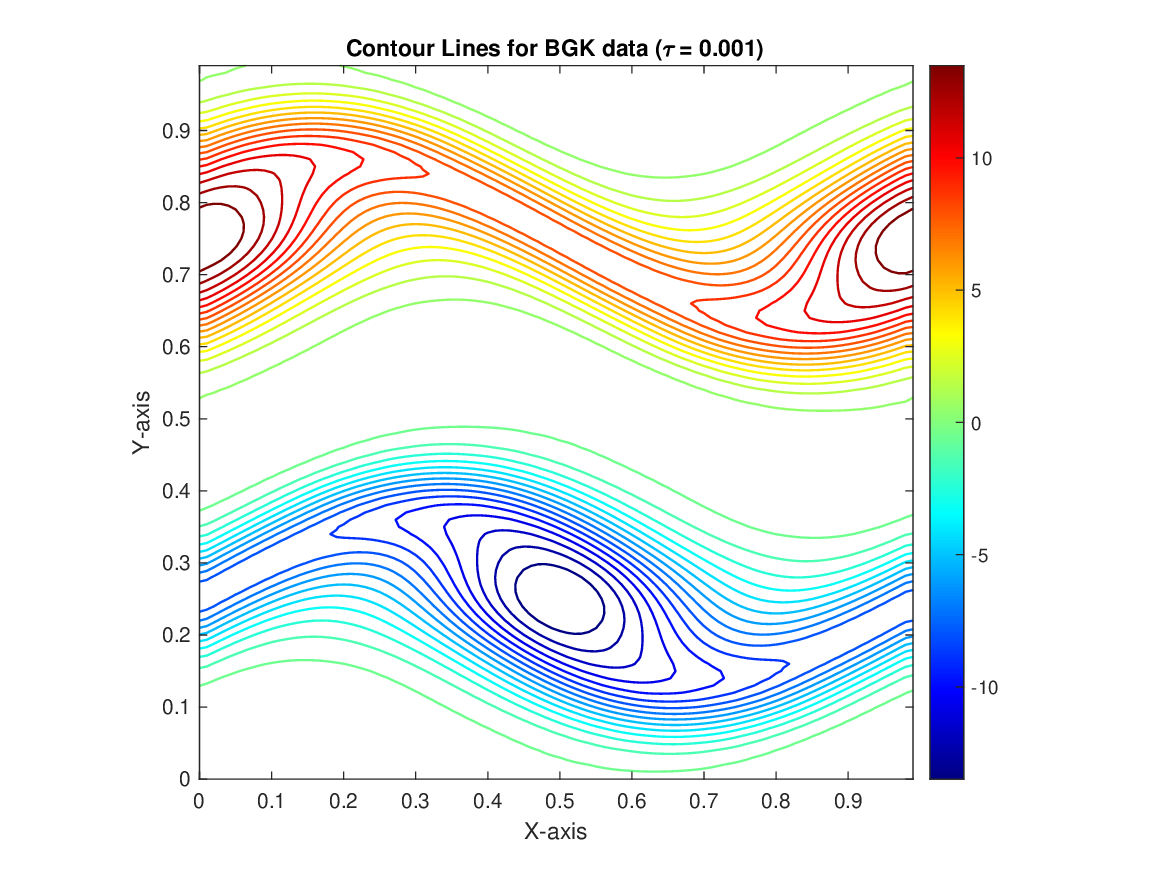}
%		\caption{$\varepsilon = 0.5$,v-Raum, Diff exp. - Algo 1}
	\end{subfigure}
	\centering
	\begin{subfigure}{0.45\textwidth}
		\centering
	\includegraphics[width=\linewidth]{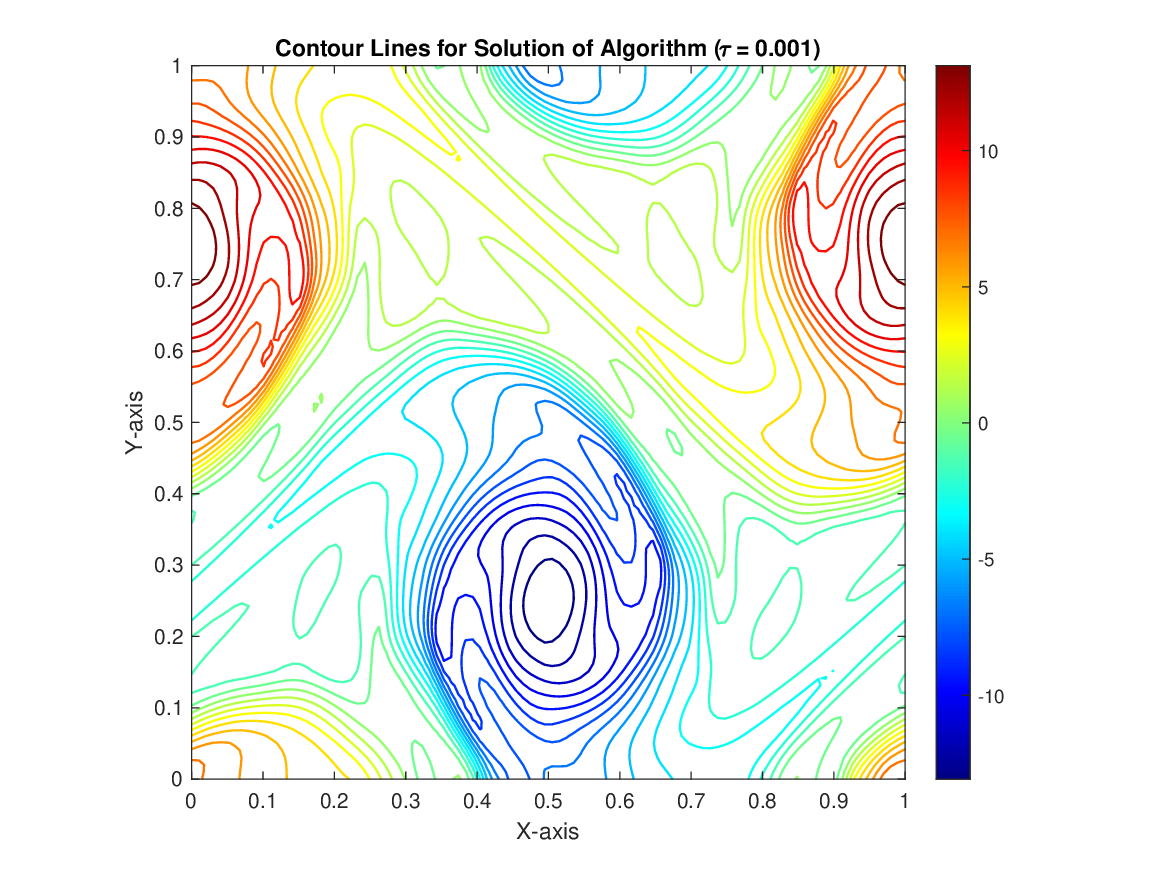}

	\end{subfigure}
	\caption{Contour lines of the vorticity $\omega$, computed from the velocity components $u_1$ and $u_2$, for the shear layer test case for $\epsilon = 0.5$ and $\tau = 0.001$ at $t = 1.5$. Left: BGK \cite{TKR}, right: relaxation system with IMEX RK2, minmod}
	\label{fig:2d-2}
\end{figure}

%\begin{figure}[H]
%	\centering
%	\begin{subfigure}{0.45\textwidth}
%		\centering
%		\includegraphics[width=\linewidth]{figures/shear_layer/eps_0dot5/tau_0dot001/shear_layer_RK2WENO3_eps_0dot5_tau_0dot001_t_1dot5.eps}
%%		\caption{$\varepsilon = 0.5$,v-Raum, Diff exp. - Algo 1}
%	\end{subfigure}
%%	\centering
%%	\begin{subfigure}{0.45\textwidth}
%%		\centering
%%%		\includegraphics[width=\linewidth]{Images/AlgoSoleps1tau00001D2}
%%	\includegraphics[width=\linewidth]{figures/shear_layer/eps_0dot1/tau_1em3/shear_layer_VielDiff_eps_0dot1_tau_0dot001_t_1dot5.eps}
%%
%%	\end{subfigure}
%	\caption{Solution of the shear layer test case for $\epsilon = 0.5$ and $\tau = 0.001$ at $t = 1.5$. Left: IMEX RK2, WENO3, right: IMEX RK3, WENO5}
%	\label{fig:2d-4}
%\end{figure}

%%%%%%%%%%%%%%%%%%%%% 0.1 %%%%%%%%%%%%%%%%%%%%%%%%%%%%%%%%%%%%%%%%%

%%%%%%%%%%%%%%%%%%
\begin{figure}[H]
	\centering
	\begin{subfigure}{0.45\textwidth}
		\centering
	\includegraphics[width=\linewidth]{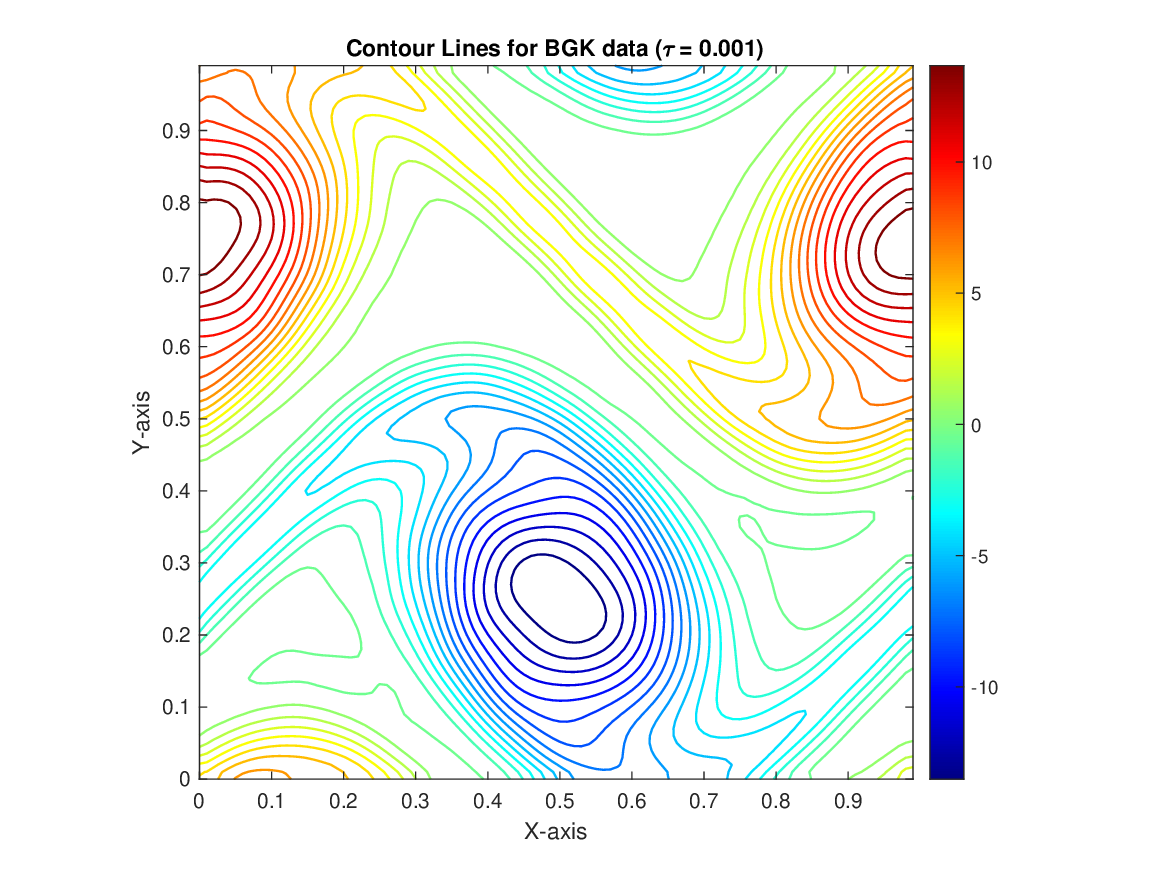}
%		\caption{$\varepsilon = 0.5$,v-Raum, Diff exp. - Algo 1}
	\end{subfigure}
	\centering
	\begin{subfigure}{0.45\textwidth}
		\centering
	\includegraphics[width=\linewidth]{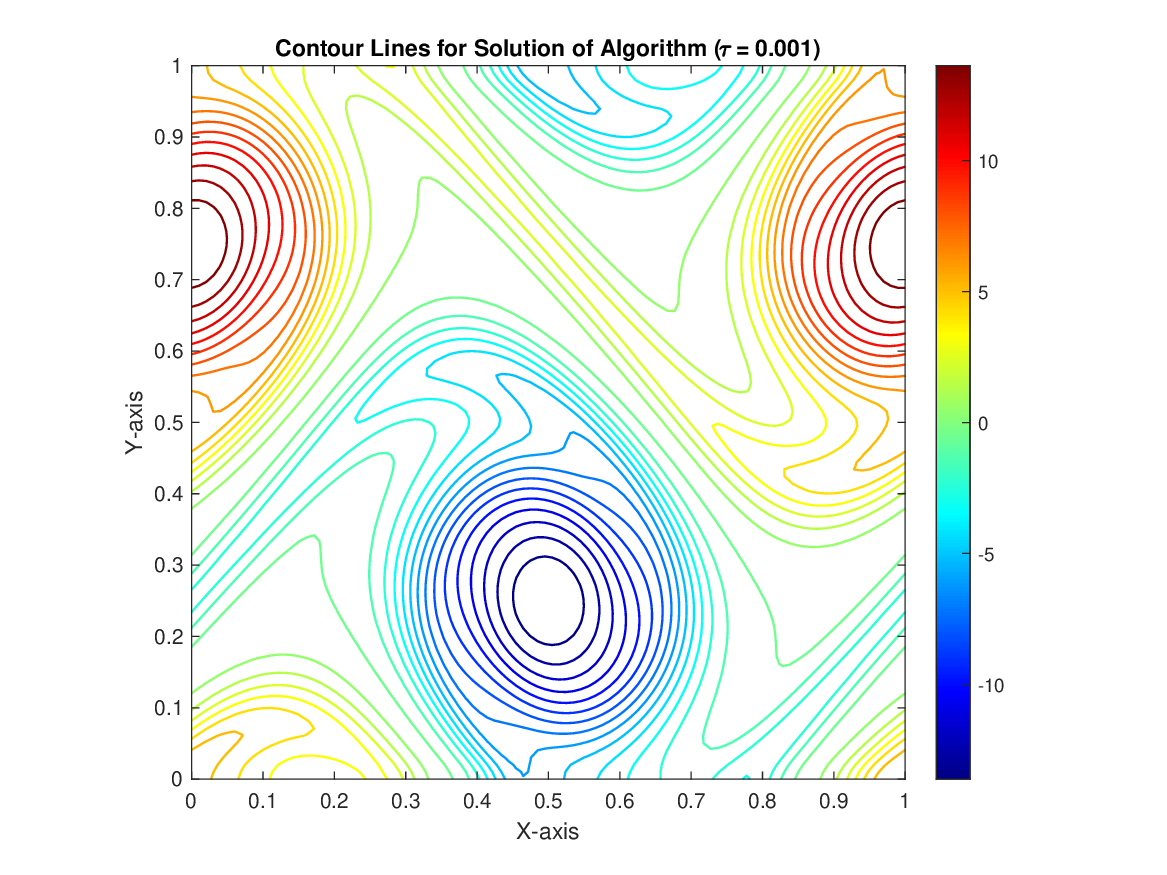}

	\end{subfigure}
	\caption{Contour lines of the vorticity $\omega$, computed from the velocity components $u_1$ and $u_2$, for the shear layer test case for $\epsilon = 0.1$ and $\tau = 0.001$ at $t = 1.5$. Left: BGK \cite{TKR}, right: relaxation system with  IMEX RK2, minmod}
	\label{fig:2d-3}
\end{figure}

\subsubsection{Comparison of higher order results}
While the comparison of a classical BGK solver with the second order solution of our model confirms the validity, further strength is lying in its high--order capabilities, that are able to capture peak amplitudes and sharp gradients, that are usually smoothed out by a second-order approach, especially for smaller values of the parameters 
$\epsilon$ and $\tau$. For these cases we use the BGK algorithm from \cite{2ndPaper} and compare it to the relaxation system. 
%For both algorithms we use a IMEX RK2 time discretization and a third order WENO spatial discretization.

We start the investigation by considering a situation with $\epsilon =0.1$ and $\tau=0.0001$. We use for both problems a discretization based on
IMEX RK2 and WENO3.
Figure~\ref{fig:2d-4} shows already a very good agreement between the solutions of the BGK equation and the relaxation system.

Moreover, we investigate the hydrodynamic limit using smaller  values of $\epsilon$ and different values of $\tau$. For these parameters, standard BGK simulations are no longer computationally feasible. The AP method from \cite{2ndPaper} works in contrast uniformly for all ranges of parameters. 
Figures~\ref{fig:2d-5} and \ref{fig:2d-6} show a situation with $\epsilon =0.01$, $\tau=0.001$ and $\tau=0.0001$ respectively.
Again IMEX RK2 and WENO3 discretizations are used. 
In these cases one observes again  a very good fit between the kinetic solution and the solution of the relaxation model.
 %The simulations confirm that our solver is working as expected also in the hydrodynamic regime and the results coincide well with those of incompressible Navier--Stokes simulations with different values of the diffusivity and with the incompressible Euler limit. As already stated, setting $\tau = 0.01$ yields the incompressible Navier--Stokes solution, while reducing it to $\tau = 0$ causes Algorithm~\ref{alg1RelSys} to approach the incompressible Euler limit.
 
%Finally, we investigate a situation with $\epsilon = 10^{-6}$ and  $\tau = 0$ at $t = 1.5$, i.e. the incompressible Euler limit. As expected, the kinetic solution and the solution of the relaxation model give completely consistent results . 
%Finally, we compare the solutions of the relaxation system based on a IMEX RK2 and WENO3 discretization and a solution using IMEX RK3 and WENO5, 
%see Figure \ref{fig:2d-7}, demonstrating the high-order capabilities of the proposed method.

To conclude, we investigate the incompressible Euler limit by setting $\epsilon = 10^{-6}$ and $\tau = 0$ at $t = 1.5$. As expected, the kinetic solution and the solution of the relaxation model yield completely consistent results. Furthermore, a comparison between the relaxation system solved via IMEX RK2 with WENO3 and IMEX RK3 with WENO5 (see Figure \ref{fig:2d-7}) clearly demonstrates the high-order capabilities of the proposed method: The higher-order IMEX RK3/WENO5 method introduces less numerical diffusion, allowing it to preserve steeper gradients and capture higher peak amplitudes of the vorticity field compared to the lower-order scheme.

\begin{figure}[H]
	\centering
	\begin{subfigure}{0.45\textwidth}
		\centering
		\includegraphics[width=\linewidth]{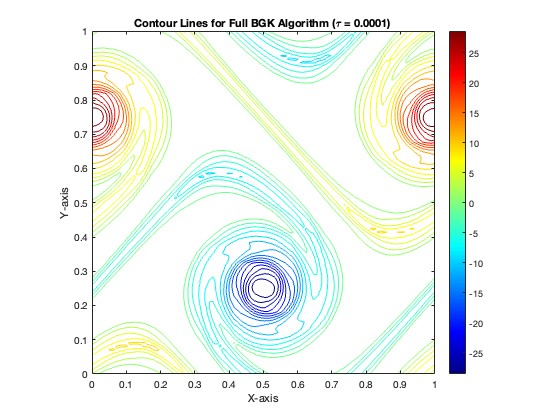}
		%		\caption{$\varepsilon = 0.5$,v-Raum, Diff exp. - Algo 1}
	\end{subfigure}
	\centering
	\begin{subfigure}{0.45\textwidth}
		\centering
		\includegraphics[width=\linewidth]{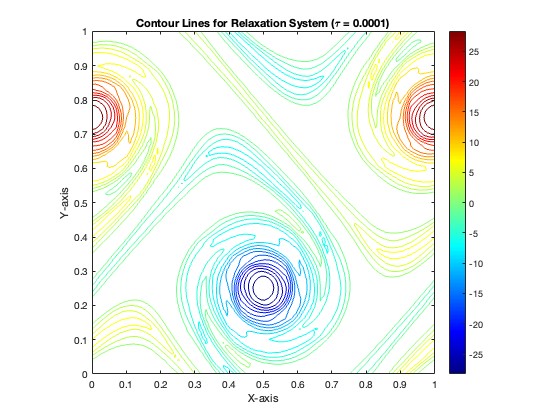}
		%		\caption{$\varepsilon = 0.5$,v-Raum, Diff imp. - Algo 2}
	\end{subfigure}
	\caption{Contour lines of the vorticity $\omega$, computed from the velocity components $u_1$ and $u_2$, for the shear layer test case for $\epsilon = 0.1$ and $\tau = 0.0001$ at $t = 1.5$. Left: BGK \cite{2ndPaper}, right: relaxation system. Both methods use   IMEX RK2, WENO3.}
	\label{fig:2d-4}
\end{figure}

\begin{figure}[H]
	\centering
	\begin{subfigure}{0.45\textwidth}
		\centering
		\includegraphics[width=\linewidth]{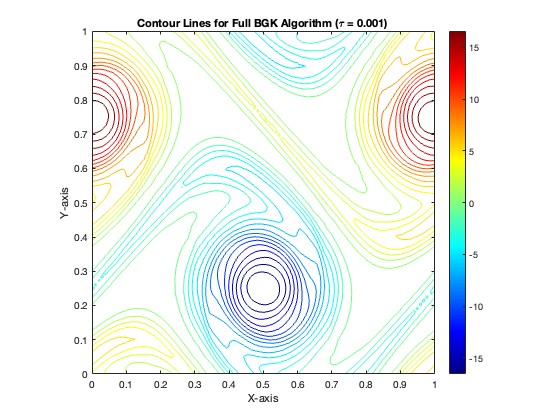}
		%		\caption{$\varepsilon = 0.5$,v-Raum, Diff exp. - Algo 1}
	\end{subfigure}
	\centering
	\begin{subfigure}{0.45\textwidth}
		\centering
		\includegraphics[width=\linewidth]{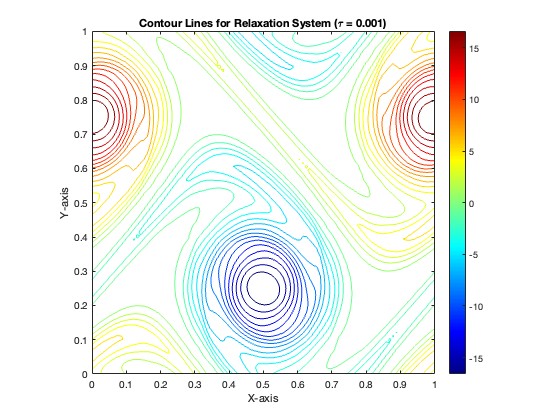}
		%		\caption{$\varepsilon = 0.5$,v-Raum, Diff imp. - Algo 2}
	\end{subfigure}
	\caption{Contour lines of the vorticity $\omega$, computed from the velocity components $u_1$ and $u_2$, for the shear layer test case for $\epsilon = 0.01$ and $\tau = 0.001$ at $t = 1.5$. Left: BGK \cite{2ndPaper}, right: relaxation system. Both methods use   IMEX RK2, WENO3.}
	\label{fig:2d-5}
\end{figure}

\begin{figure}[H]
	\centering
	\begin{subfigure}{0.45\textwidth}
		\centering
		\includegraphics[width=\linewidth]{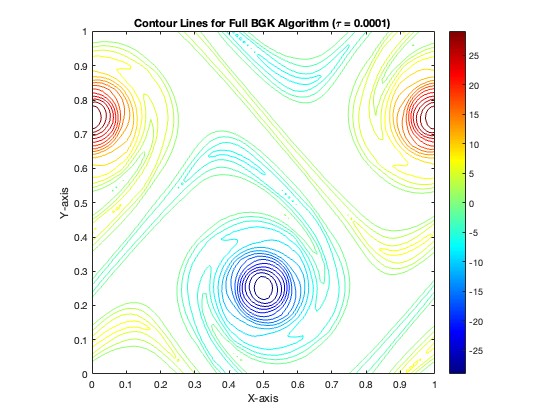}
		%		\caption{$\varepsilon = 0.5$,v-Raum, Diff exp. - Algo 1}
	\end{subfigure}
	\centering
	\begin{subfigure}{0.45\textwidth}
		\centering
		\includegraphics[width=\linewidth]{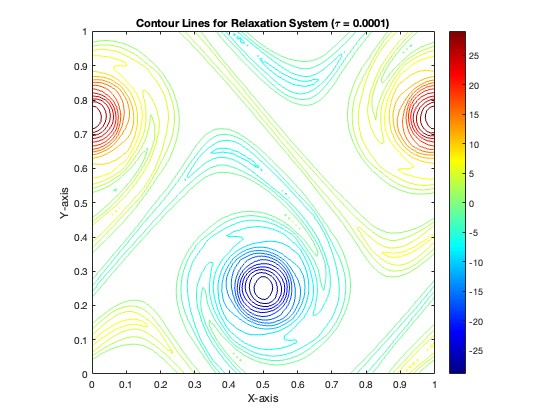}
		%		\caption{$\varepsilon = 0.5$,v-Raum, Diff imp. - Algo 2}
	\end{subfigure}
	\caption{Contour lines of the vorticity $\omega$, computed from the velocity components $u_1$ and $u_2$, for the shear layer test case for $\epsilon = 0.01$ and $\tau = 0.0001$ at $t = 1.5$. Left: BGK \cite{2ndPaper}, right: relaxation system.
		Both methods use   IMEX RK2, WENO3.}
	\label{fig:2d-6}
\end{figure}

\begin{figure}[H]
	\centering
	\begin{subfigure}{0.45\textwidth}
		\centering
		\includegraphics[width=\linewidth]{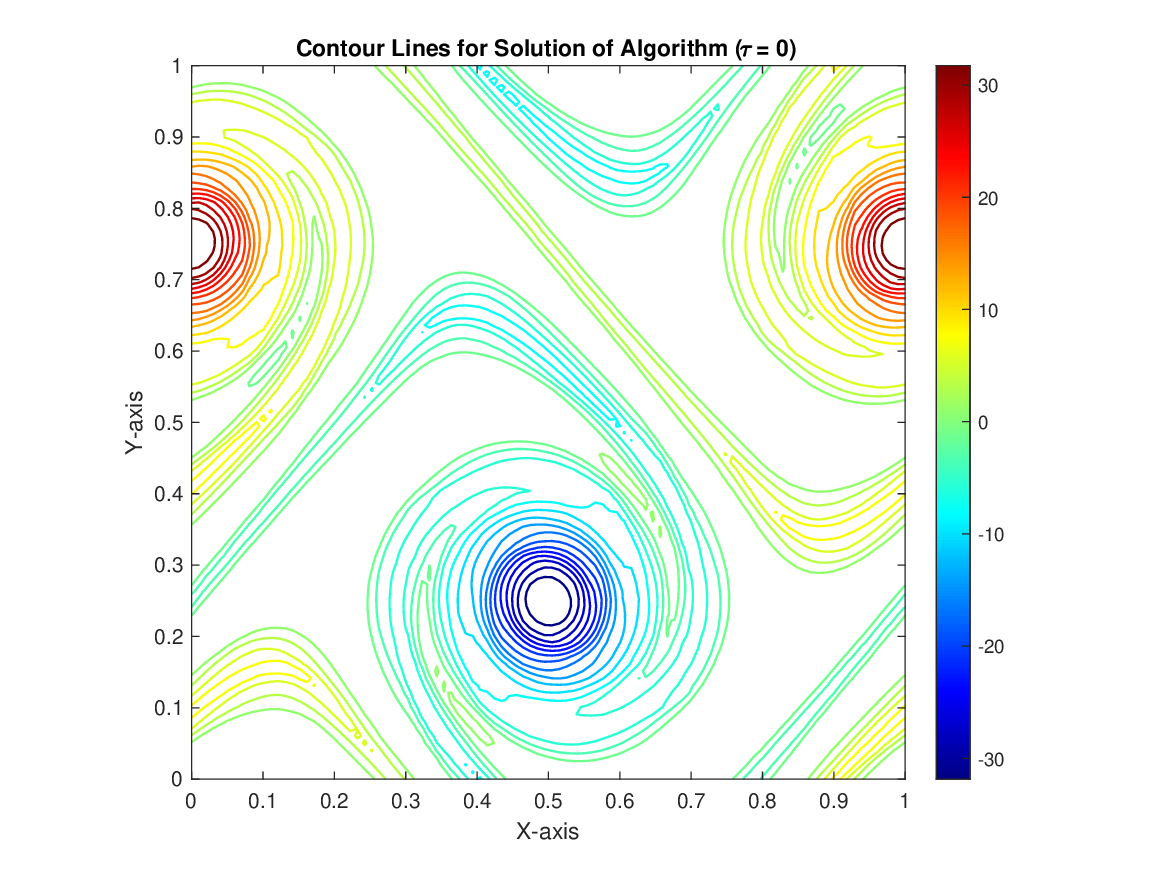}
		%		\caption{$\varepsilon = 0.5$,v-Raum, Diff exp. - Algo 1}
	\end{subfigure}
	\centering
	\begin{subfigure}{0.45\textwidth}
		\centering
		\includegraphics[width=\linewidth]{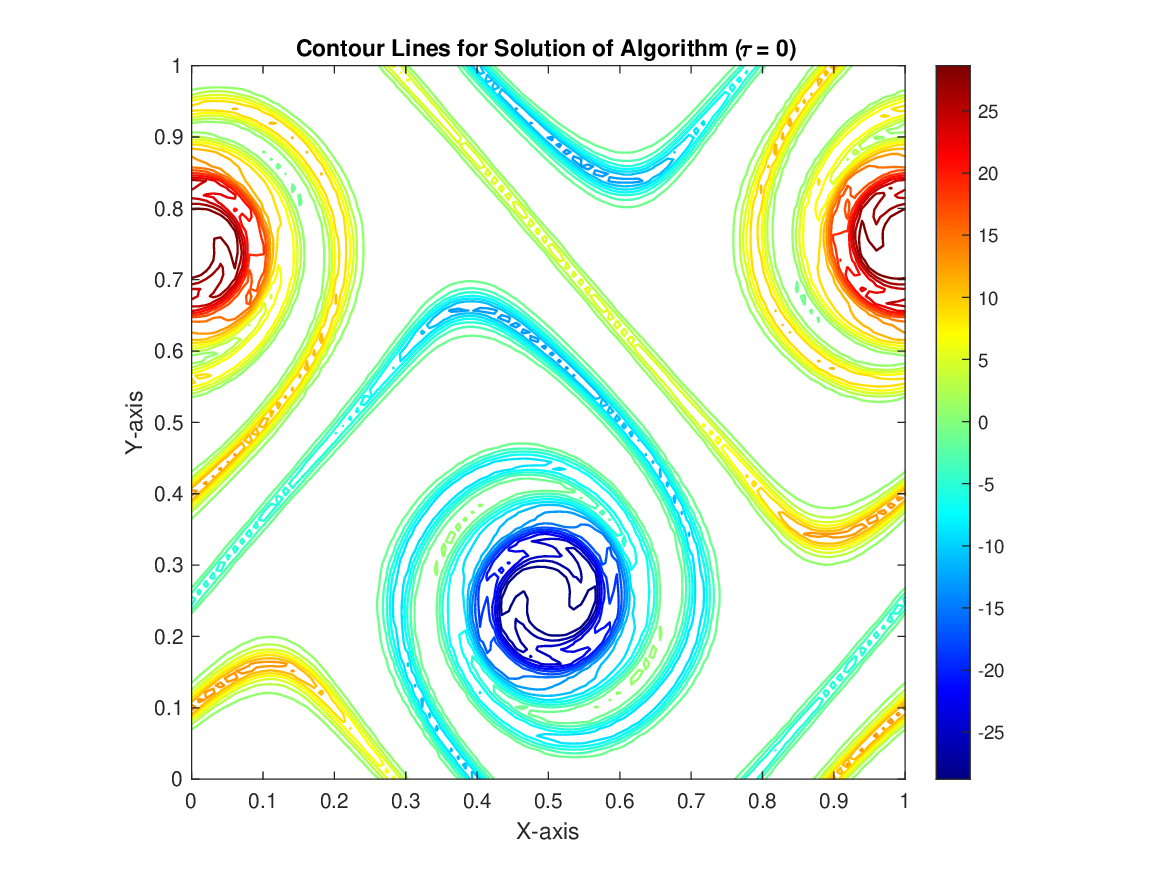}
		%		\caption{$\varepsilon = 0.5$,v-Raum, Diff imp. - Algo 2}
	\end{subfigure}
	\caption{Contour lines of the vorticity $\omega$, computed from the velocity components $u_1$ and $u_2$, for the relaxation system for the shear layer test case for $\epsilon = 10^{-6}$ and  $\tau = 0$ at $t = 1.5$. Left: IMEX RK2,WENO3, right: IMEX RK3,WENO5. The higher-order IMEX RK3/WENO5 method is able to preserve steeper gradients and capture higher peak amplitudes of the vorticity field compared to the IMEX RK2/WENO3 scheme.}
	\label{fig:2d-7}
\end{figure}

\begin{figure}[H]
	\centering
	\begin{subfigure}{1\textwidth}
		\centering
		\includegraphics[width=\linewidth]{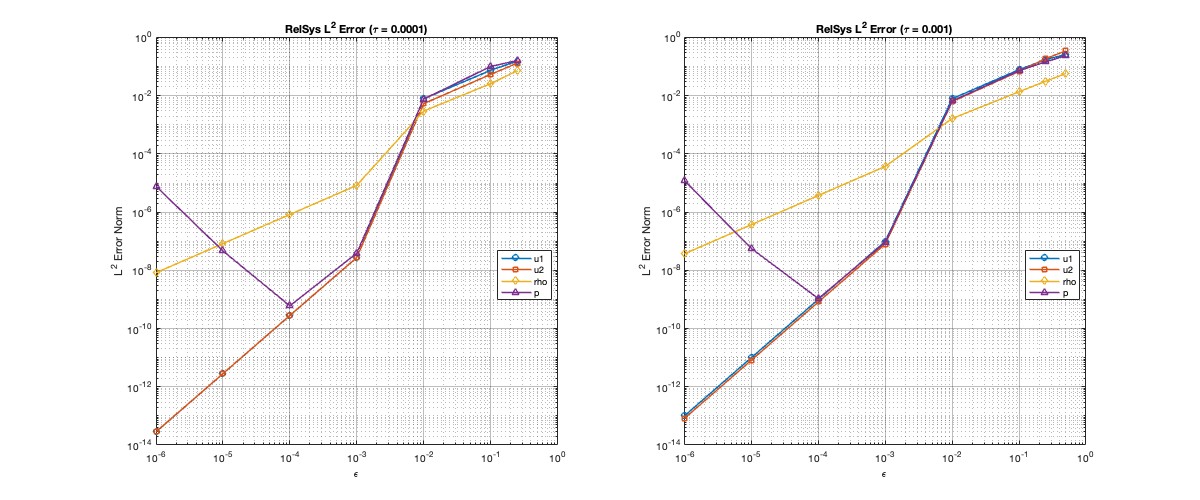}
		%		\caption{$\varepsilon = 0.5$,v-Raum, Diff exp. - Algo 1}
	\end{subfigure}
	\caption{Comparison of the $L^2$ errors (Shear Layer test case at $t = 1.5$) of the Relaxation System against the Full BGK reference solution (\cite{2ndPaper}) across different scaling parameters $\epsilon$ for $\tau = 0.0001$ (left) and $\tau = 0.001$ (right).}
	\label{fig:2d-Error}
\end{figure}

\subsubsection{Model errors}
Last but not least, Figure~\ref{fig:2d-Error} displays the $L^2$ norm errors of $u_1, u_2, p$, and $\rho$ computed by the relaxation system compared to a BGK reference solution obtained using the scheme described in \cite{2ndPaper}. The errors are plotted as a function of the scaling parameter $\varepsilon$ on a log-log scale for two different values of $\tau$ ($\tau = 0.0001$ and $\tau = 0.001$). 

The error response to the scaling parameter $\varepsilon$ exhibits two distinct behavioral regimes, noting that the transition threshold around $\varepsilon = 10^{-3}$ is specific to the present test configurations rather than serving as a universal regime boundary:
\begin{itemize}
\item Continuum-dominated regime (for smaller values, e.g., $\varepsilon \le 10^{-3}$): The velocity components $u_1$ and $u_2$ show excellent monotonic asymptotic convergence. The density $\rho$ also converges monotonically. The pressure $p$ behaves non-monotonically, attaining an error minimum at $\varepsilon = 10^{-4}$ before increasing again for smaller $\varepsilon$.
A possible explanation for this degradation at extremely small $\varepsilon$ is the influence of floating-point round-off effects, such as explicit $1/\varepsilon$-scalings in the predictor steps causing small physical fluctuations to approach the double-precision threshold when scaled back, though establishing this definitively would require further precision and tolerance sensitivity tests.
\item Kinetic regime (for larger values, e.g., $\varepsilon > 10^{-3}$): Deviations from the reference solution scale of order $\mathcal{O}(\varepsilon)$. Because the relaxation system truncates the infinite moment hierarchy inherent to kinetic theory, it cannot capture the strong non-equilibrium effects that emerge as $\varepsilon$ grows, leading to significant structural model errors for larger values of $\varepsilon$, as expected.
\end{itemize}

\section{Conclusions}\label{sec8}
In this work, we have introduced a hyperbolic relaxation system for the simulation of kinetic equations in the low-Mach number limits. By employing a micro-macro decomposition of the BGK model and appropriate simplifications, we derived a closed moment-based system whose hyperbolic core simplifies Grad's linear moment framework while bypassing the high-dimensional discretization of velocity space. 

Theoretically, we established the structural stability of the model in the incompressible limit via symmetrizability and proved its consistency with the INS equations. To bridge the continuum and rarefied regimes, we developed a high-order AP numerical framework combining IMEX RK schemes with high-order spatial reconstructions (WENO and central differences) under established stability constraints. In the asymptotic limit ($\epsilon \to 0$), the scheme successfully degenerates into a consistent projection method for the INS equations on collocated grids.

Numerical experiments in 1D and 2D validate our theoretical findings across distinct physical regimes. In the continuum-dominated regime (noting that thresholds such as $\varepsilon \le 10^{-3}$ are specific to the present test configurations rather than serving as universal regime boundaries), the relaxation model exhibits excellent agreement with full BGK reference solutions, with numerical comparisons demonstrating close alignment down to relative scales of $\mathcal{O}(10^{-14})$. At extreme asymptotic limits ($\varepsilon < 10^{-4}$), a possible explanation for the non-monotonic error degradation observed in the pressure field is the influence of floating-point round-off effects and catastrophic cancellation inherent to $1/\varepsilon$ predictor scalings, though establishing this definitively would require further precision tests. In the kinetic regime ($\varepsilon > 10^{-3}$), the expected growing deviations reflect the truncated moment hierarchy of the reduced-order system outside the hydrodynamic limit.

Crucially, our continuous moment-based approach achieves consistent asymptotic convergence toward the high-fidelity reference solution as $\varepsilon \to 0$, completely bypassing the computational bottleneck of high-dimensional velocity space discretizations. Furthermore, the relaxation framework is naturally formulated for and seamlessly applicable to arbitrary spatial dimensions without requiring specialized stencil or velocity discrete constructions. Coupled with computational speedups of several orders of magnitude in stiff regimes compared to full kinetic solvers, the proposed framework provides a robust and cost-effective tool for multiscale fluid dynamics, while remaining subject to the model-validity qualifications discussed earlier in this work.

Looking ahead, several promising avenues for future research emerge from the results presented in this work. One primary direction involves extending the current relaxation framework to incorporate more complex collision operators, such as the full Boltzmann equation or ES-BGK models, to more accurately capture physical phenomena involving non--constant Prandtl numbers. 
Furthermore, to improve the model's fidelity in more rarefied regimes (i.e., for larger values of $\varepsilon$), the system could be extended to include higher kinetic moments, akin to Grad's extended moment systems. This would systematically reduce the truncation error of the physical model outside the hydrodynamic limit.
Additionally, the development and implementation of high--order AP boundary conditions for complex geometries will be crucial for applying this methodology to practical engineering problems. We refer to \cite{torrilhon2008boundary} for investigations in this direction for moment systems. Finally, the significant reduction in computational cost offered by our relaxation system makes it an ideal candidate for Uncertainty Quantification (UQ) studies.

\bibliographystyle{plain}
\bibliography{reference_last2.bib}

\end{document}